\newtheorem{theorem}{Theorem}[section]
\newtheorem{lemma}[theorem]{Lemma}
\newtheorem{corollary}[theorem]{Corollary}
\newtheorem{proposition}[theorem]{Proposition}
\begin{document}

\title[Geometric realizations of Lusztig's symmetries]
{Geometric realizations of Lusztig's symmetries}

\author[Xiao]{Jie Xiao}
\address{Department of Mathematical Sciences, Tsinghua University, Beijing 100084, P. R. China}
\email{jxiao@math.tsinghua.edu.cn (J.Xiao)}
\author[Zhao]{Minghui Zhao$^\dag$}
\address{College of Science, Beijing Forestry University, Beijing 100083, P. R. China}
\email{zhaomh@bjfu.edu.cn (M.Zhao)}
\thanks{$^\dag$ Corresponding author}

\subjclass[2000]{16G20, 17B37}

\date{\today}

\keywords{Lusztig's symmetries, Standard modules, Projective resolutions}

\bibliographystyle{abbrv}

\maketitle

\begin{abstract}
In this paper, we give geometric realizations of Lusztig's symmetries.
We also give projective resolutions of a kind of standard modules.
By using the geometric realizations and the projective resolutions, we
obtain the categorification of the formulas of Lusztig's symmetries.
\end{abstract}

\tableofcontents

\section{Introduction}

Let $\mathbf{U}$ be the quantum group and $\mathbf{f}$ be the Lusztig's algebra associated with a Cartan datum. Denote by $\mathbf{U}^+$ and $\mathbf{U}^-$ the positive part and the negative part of $\mathbf{U}$ respectively. There are two well-defined $\mathbb{Q}(v)$-algebra homomorphisms ${^+}:\mathbf{f}\rightarrow\mathbf{U}$ and ${^-}:\mathbf{f}\rightarrow\mathbf{U}$ with images $\mathbf{U}^+$ and $\mathbf{U}^-$ respectively.

Lusztig introduced the canonical basis $\mathbf{B}$ of $\mathbf{f}$ in \cite{Lusztig_Canonical_bases_arising_from_quantized_enveloping_algebra,Lusztig_Quivers_perverse_sheaves_and_the_quantized_enveloping_algebras,Lusztig_Introduction_to_quantum_groups}.
Let $Q=(I,H)$ be a quiver corresponding to $\mathbf{f}$ and $\mathbf{V}$ be an $I$-graded vector space such that $\underline{\dim}\mathbf{V}=\nu\in\mathbb{N}I$.
He studied the variety $E_{\mathbf{V}}$ consisting of representations of $Q$ with dimension vector $\nu$, and a category $\mathcal{Q}_{\mathbf{V}}$ of some semisimple perverse sheaves on $E_{\mathbf{V}}$. Let $K(\mathcal{Q}_{\mathbf{V}})$ be the Grothendieck group of $\mathcal{Q}_{\mathbf{V}}$. Considering all dimension vectors, he proved that $\bigoplus_{\nu\in\mathbb{N}I}K(\mathcal{Q}_{\mathbf{V}})$ realizes $\mathbf{f}$ and the set of isomorphism classes of simple objects realizes the canonical basis $\mathbf{B}$.

Lusztig also introduced some symmetries $T_i$ on $\mathbf{U}$ for all $i\in I$ in \cite{Lusztig_Quantum_deformations_of_certain_simple_modules_over_enveloping_algebras,Lusztig_Quantum_groups_at_roots_of_1}.
Note that $T_i(\mathbf{U}^+)$ is not contained in $\mathbf{U}^+$. Hence, Lusztig introduced two subalgebras ${_i\mathbf{f}}$ and ${^i\mathbf{f}}$ of $\mathbf{f}$ for any $i\in I$, where ${_i\mathbf{f}}=\{x\in\mathbf{f}\,\,|\,\,T_i(x^+)\in\mathbf{U}^+\}$ and ${^i\mathbf{f}}=\{x\in\mathbf{f}\,\,|\,\,T^{-1}_i(x^+)\in\mathbf{U}^+\}$.
Let $T_i:{_i\mathbf{f}}\rightarrow{^i\mathbf{f}}$ be the unique map satisfying
$T_i(x^+)=T_i(x)^+$.
The algebra $\mathbf{f}$ has the following direct sum decompositions
$\mathbf{f}={_i\mathbf{f}}\bigoplus\theta_i\mathbf{f}={^i\mathbf{f}}\bigoplus\mathbf{f}\theta_i$.
Denote by $_i\pi:\mathbf{f}\rightarrow{_i\mathbf{f}}$ and $^i\pi:\mathbf{f}\rightarrow{^i\mathbf{f}}$ the natural projections.

Associated to a finite dimensional hereditary algebra $\Lambda$, Ringel introduced the Hall algebra and the composition subalgebra $\mathcal{F}$ in \cite{Ringel_Hall_algebras_and_quantum_groups},
which gives a realization of the positive part of the quantum group $\mathbf{U}$. If we use the notations of Lusztig in \cite{Lusztig_Canonical_bases_and_Hall_algebras}, we have the canonical isomorphism between the composition subalgebra $\mathcal{F}$ and the Lusztig's algebra $\mathbf{f}$.
Via the Hall algebra approach, one can apply BGP-reflection functors to quantum groups to give precise constructions of Lusztig's symmetries (\cite{
Ringel_PBW-bases_of_quantum_groups,Lusztig_Canonical_bases_and_Hall_algebras,
Sevenhant_Van_den_Bergh_On_the_double_of_the_Hall_algebra_of_a_quiver,
Xiao_Yang_BGP-reflection_functors_and_Lusztig's_symmetries,Deng_Xiao,
Xiao_Zhao_BGP-reflection_functors_and_Lusztig's_symmetries_of_modified_quantized_enveloping_algebras}).

To a Lusztig's algebra $\mathbf{f}$,
Khovanov, Lauda (\cite{Khovanov_Lauda_A_diagrammatic_approach_to_categorification_of_quantum_groups_I}) and Rouquier (\cite{Rouquier_2-Kac-Moody_algebras}) introduced a series of algebras $\mathbf{R}_{\nu}$ respectively.
The category of finitely generated projective modules of $\mathbf{R}_{\nu}$ gives a categorification of $\mathbf{f}$ and $\mathbf{R}_{\nu}$ are called Khovanov-Lauda-Rouquier (KLR) algebras. Varagnolo, Vasserot (\cite{Varagnolo_Vasserot_Canonical_bases_and_KLR-algebras}) and
Rouquier (\cite{Rouquier_Quiver_Hecke_algebras_and_2-Lie_algebras}) realized the KLR algebra $\mathbf{R}_{\nu}$ as the extension algebra of semisimple perverse sheaves in $\mathcal{Q}_{\mathbf{V}}$ and proved that the set of indecomposable projective modules of $\mathbf{R}_{\nu}$ can categorify the canonical basis $\mathbf{B}$.

In \cite{Kato_An_algebraic_study_of_extension_algebra,Kato_PBW_bases_and_KLR_algebras}, Kato gave the categorification of the PBW-type bases of quantum groups of finite type. He constructed some modules (which are called standard modules) of the KLR algebras $\mathbf{R}_{\nu}$ and proved that there standard modules can
categorify the PBW-type basis of $\mathbf{f}$ by using the geometric realizations of $\mathbf{R}_{\nu}$ given by Varagnolo, Vasserot and Rouquier.
He proved that the length of the projective resolution of any standard module is finite, which is the categorification of the following fact:
the transition matrix between the PBW-type basis of $\mathbf{f}$ and the canonical basis $\mathbf{B}$ is triangular
with diagonal entries equal to $1$.
This result implies that the global dimensions of the KLR algebras $\mathbf{R}_{\nu}$ are also finite. In \cite{McNamara_Finite_dimensional_representations_of_Khovanov-Lauda-Rouquier_algebras_I,
Brundan_Kleshchev_McNamara_Homological_properties_of_finite-type_Khovanov-Lauda-Rouquier_algebras}, Brundan, Kleshchev and McNamara proved the same result by using an algebraic method.

Let $i\in I$ be a sink (resp. source) of $Q$. Similarly to the geometric realization of $\mathbf{f}$, consider a subvariety
${_i{E_{\mathbf{V}}}}$ (resp. ${^i{E_{\mathbf{V}}}}$) of $E_{\mathbf{V}}$ and a category ${_i\mathcal{Q}}_{\mathbf{V}}$ (resp. ${^i\mathcal{Q}}_{\mathbf{V}}$) of some semisimple perverse sheaves on ${_i{E_{\mathbf{V}}}}$ (resp. ${^i{E_{\mathbf{V}}}}$). In Section \ref{subsection:3.2}, we verify that $\bigoplus_{\nu\in\mathbb{N}I}K({_i\mathcal{Q}}_{\mathbf{V}})$ (resp. $\bigoplus_{\nu\in\mathbb{N}I}K({^i\mathcal{Q}}_{\mathbf{V}})$) realizes
${_i\mathbf{f}}$ (resp. ${^i\mathbf{f}}$).

Let $i\in I$ be a sink of $Q$. Let $Q'=\sigma_iQ$ be the quiver by reversing the directions
of all arrows in $Q$ containing $i$. Hence, $i$ is a source of $Q'$. Consider two $I$-graded vector spaces $\mathbf{V}$ and $\mathbf{V}'$ such that $\underline{\dim}\mathbf{V}'=s_i(\underline{\dim}\mathbf{V})$.
In the case of finite type, Kato introduced an equivalence $\tilde{\omega}_i:{_i\mathcal{Q}}_{\mathbf{V},Q}\rightarrow{^i\mathcal{Q}_{\mathbf{V}',Q'}}$ and studied the properties of this equivalence in \cite{Kato_An_algebraic_study_of_extension_algebra,Kato_PBW_bases_and_KLR_algebras}. In this paper, we generalize his construction to all cases and prove that the map induced by $\tilde{\omega}_i$ realizes the Lusztig's symmetry $T_i:{_i\mathbf{f}}\rightarrow{^i\mathbf{f}}$. For the proof of the result, we shall study the relations between the map induced by $\tilde{\omega}_i$ and the Hall algebra approach to $T_i$ in \cite{Lusztig_Canonical_bases_and_Hall_algebras}.

In \cite{Lusztig_Braid_group_action_and_canonical_bases}, Lusztig showed that Lusztig's symmetries and canonical bases are compatible.
Let ${_i\mathbf{B}}={_i\pi}(\mathbf{B})$, which
is a $\mathbb{Q}(v)$-basis of ${_i\mathbf{f}}$.
Similarly, ${^i\mathbf{B}}={^i\pi}(\mathbf{B})$ is a $\mathbb{Q}(v)$-basis of ${^i\mathbf{f}}$.
Lusztig proved that $T_i:{_i\mathbf{f}}\rightarrow{^i\mathbf{f}}$ maps any element of ${_i\mathbf{B}}$ to an element of ${^i\mathbf{B}}$.

For any simple perverse sheaf $\mathcal{L}$ in $\mathcal{Q}_{\mathbf{V},Q}$, the restriction ${_i\mathcal{L}}=j^{\ast}_{\mathbf{V}}(\mathcal{L})$ on ${_i{E_{\mathbf{V},Q}}}$ is also a simple perverse sheaf and belongs to ${_i\mathcal{Q}}_{\mathbf{V},Q}$, where $j_{\mathbf{V}}:{_iE}_{\mathbf{V},Q}\rightarrow E_{\mathbf{V},Q}$ is the canonical embedding. Let ${^i\mathcal{L}}=\tilde{\omega}_i({_i\mathcal{L}})\in{^i\mathcal{Q}_{\mathbf{V}',Q'}}$. The simple perverse sheaf ${^i\mathcal{L}}$ can be wrote as
${^i\mathcal{L}}=j^{\ast}_{\mathbf{V}'}(\mathcal{L}')$, where $\mathcal{L}'$ is a simple perverse sheaf in $\mathcal{Q}_{\mathbf{V}',Q'}$ and  $j_{\mathbf{V}'}:{^iE}_{\mathbf{V}',Q'}\rightarrow E_{\mathbf{V}',Q'}$ is the canonical embedding. Since the map induced by $\tilde{\omega}_i$ realizes
$T_i:{_i\mathbf{f}}\rightarrow{^i\mathbf{f}}$, this result gives a geometric interpretation of Lusztig's result in \cite{Lusztig_Braid_group_action_and_canonical_bases}.

For any $m\leq-a_{ij}$, let
\begin{displaymath}
f(i,j;m)=\sum_{r+s=m}(-1)^rv^{-r(-a_{ij}-m+1)}\theta_i^{(r)}\theta_j\theta_i^{(s)}\in\mathbf{f},
\end{displaymath}
and
\begin{displaymath}
f'(i,j;m)=\sum_{r+s=m}(-1)^rv^{-r(-a_{ij}-m+1)}\theta_i^{(s)}\theta_j\theta_i^{(r)}\in\mathbf{f}.
\end{displaymath}
In \cite{Lusztig_Introduction_to_quantum_groups}, Lusztig proved that $T_i(f(i,j;m))=f'(i,j;m')$, where $m'=-a_{ij}-m$.
The following formula
$$T_i(E_j)=\sum_{r+s=-a_{ij}}(-1)^rv^{-r}E_i^{(s)}E_jE_i^{(r)}$$
is a special case of $T_i(f(i,j;m))=f'(i,j;m')$.
In this paper, our main result is the categorification of these formulas.
Consider an $I$-graded vector space $\mathbf{V}$ such that $\underline{\dim}\mathbf{V}=\nu=mi+j$.
Let $\mathcal{D}_{G_{\mathbf{V}}}(E_{\mathbf{V}})$ be the bounded $G_{\mathbf{V}}$-equivariant derived category of complexes of $l$-adic sheaves on $E_{\mathbf{V}}$.
We construct a series of distinguished triangles in $\mathcal{D}_{G_{\mathbf{V}}}(E_{\mathbf{V}})$,
which represent the constant sheaf $\mathbf{1}_{_iE_{\mathbf{V}}}$ in terms of some semisimple perverse sheaves $I_p\in\mathcal{D}_{G_{\mathbf{V}}}(E_{\mathbf{V}})$ geometrically.
Note that, $\mathbf{1}_{_iE_{\mathbf{V}}}$ corresponds to a standard module $K_{m}$ of the KLR algebra $\mathbf{R}_{\nu}$ and $I_p$ correspond to projective modules of $\mathbf{R}_{\nu}$.
This result means that we find projective resolutions of the standard modules $K_{m}$.
Consider two $I$-graded vector spaces $\mathbf{V}$ and $\mathbf{V}'$ such that $\underline{\dim}\mathbf{V}=mi+j$ and $\underline{\dim}\mathbf{V}'=s_i(\underline{\dim}\mathbf{V})=m'i+j$.
Applying to the Grothendieck group, $\mathbf{1}_{_iE_{\mathbf{V},Q}}$ (resp. $\mathbf{1}_{^iE_{\mathbf{V}',Q'}}$) corresponds to $f(i,j;m)$ (resp. $f'(i,j;m')$).
The property of BGP-reflection functors implies $\tilde{\omega}_i(v^{-mN}\mathbf{1}_{_iE_{\mathbf{V},Q}})=v^{-m'N}\mathbf{1}_{^iE_{\mathbf{V}',Q'}}$, therefore  $T_i(f(i,j;m))=f'(i,j;m')$.

In Example D of \cite{Kato_PBW_bases_and_KLR_algebras}, Kato constructed a short exact sequence
\begin{displaymath}
\xymatrix{
0\ar[r]&{P_1\ast P_2[2]}\ar[r]&{{P_2\ast P_1}}\ar[r]&Q_{12}\ar[r]&0
}
\end{displaymath}
which coincides with the projection resolution in our main result in the case of finite type.
In Theorem 4.10 of \cite{Brundan_Kleshchev_McNamara_Homological_properties_of_finite-type_Khovanov-Lauda-Rouquier_algebras}, Brundan, Kleshchev and McNamara constructed a shout exact sequence of standard modules
\begin{displaymath}
\xymatrix{
0\ar[r]&v^{-\beta\cdot\gamma}{\Delta(\beta)\circ\Delta(\gamma)}\ar[r]&{{\Delta(\gamma)\circ\Delta(\beta)}}\ar[r]&[p_{\beta,\gamma}+1]\Delta(\alpha)\ar[r]&0.
}
\end{displaymath}
In the case of finite type, the projection resolution in our main result is a special case of the shout exact sequence above
where $\alpha=\alpha_i+\alpha_j$.

\section{Quantum groups and Lusztig's symmetries}

\subsection{Quantum groups}\label{subsection:2.1}

Let $I$ be a finite index set with $|I|=n$ and $A=(a_{ij})_{i,j\in I}$ be a generalized Cartan matrix.
Let $(A,\Pi,\Pi^{\vee},P,P^{\vee})$ be a Cartan datum associated with
$A$, where
\begin{enumerate}
  \item[(1)]$\Pi=\{\alpha_i\,\,|\,\,i\in I\}$ is the set of simple roots;
  \item[(2)]$\Pi^{\vee}=\{h_i\,\,|\,\,i\in I\}$ is the set of simple coroots;
  \item[(3)]$P$ is the weight lattice;
  \item[(4)]$P^{\vee}$ is the dual weight lattice.
\end{enumerate}
In this paper, we always assume that the generalized Cartan matrix $A$ is symmetric.
Fix an indeterminate $v$. For any $n\in\mathbb{Z}$, set
$[n]_{v}=\frac{v^n-v^{-n}}{v-v^{-1}}\in\mathbb{Q}(v)$.
Let $[0]_{v}!=1$ and $[n]_{v}!=[n]_{v}[n-1]_{v}\cdots[1]_{v}$ for any $n\in\mathbb{Z}_{>0}$.

The quantum group $\mathbf{U}$ associated with a Cartan datum $(A,\Pi,\Pi^{\vee},P,P^{\vee})$ is an associative algebra over $\mathbb{Q}(v)$ with unit element $\mathbf{1}$, generated by the elements $E_i$, $F_i(i\in I)$ and $K_{\mu}(\mu\in P^{\vee})$ subject to the following relations
\begin{displaymath}
K_{0}=\mathbf{1},\,\,\,\, K_{\mu}K_{\mu'}=K_{\mu+\mu'}\,\,\,\textrm{for all $\mu,\mu'\in P^{\vee}$};
\end{displaymath}
\begin{displaymath}
K_{\mu}E_{i}K_{-\mu}=v^{\alpha_i(\mu)}E_i\,\,\,\textrm{for all $i\in I$, $\mu\in P^{\vee}$};
\end{displaymath}
\begin{displaymath}
K_{\mu}F_{i}K_{-\mu}=v^{-\alpha_i(\mu)}F_i\,\,\,\textrm{for all $i\in I$, $\mu\in P^{\vee}$};
\end{displaymath}
\begin{displaymath}
E_iF_j-F_jE_i=\delta_{ij}\frac{K_{i}-K_{-i}}{v-v^{-1}}\,\,\,\textrm{for all $i,j\in I$};
\end{displaymath}
\begin{displaymath}
\sum_{k=0}^{1-a_{ij}}(-1)^{k}E_i^{(k)}E_jE_i^{(1-a_{ij}-k)}=0\,\,\,\textrm{for all $i\not=j\in I$};
\end{displaymath}
\begin{displaymath}
\sum_{k=0}^{1-a_{ij}}(-1)^{k}F_i^{(k)}F_jF_i^{(1-a_{ij}-k)}=0\,\,\,\textrm{for all $i\not=j\in I$}.
\end{displaymath}
Here, $K_i=K_{h_i}$ and $E_i^{(n)}=E_i^n/[n]_{v}!$, $F_i^{(n)}=F_i^n/[n]_{v}!$.

Let $\mathbf{U}^+$ (resp. $\mathbf{U}^-$) be the subalgebra of $\mathbf{U}$ generated by $E_i$ (resp. $F_i$) for all $i\in I$, and $\mathbf{U}^{0}$ be the subalgebra of $\mathbf{U}$ generated by $K_{\mu}$ for all $\mu\in P^{\vee}$. The quantum group $\mathbf{U}$ has the following triangular decomposition
\begin{displaymath}
\mathbf{U}\cong {\mathbf{U}^-}\otimes{\mathbf{U}^{0}}\otimes{\mathbf{U}^{+}}.
\end{displaymath}

Let $\mathbf{f}$ be the associative algebra defined by Lusztig in \cite{Lusztig_Introduction_to_quantum_groups}. The algebra $\mathbf{f}$ is generated by $\theta_i(i\in I)$ subject to the following relations
\begin{displaymath}
\sum_{k=0}^{1-a_{ij}}(-1)^{k}\theta_i^{(k)}\theta_j\theta_i^{(1-a_{ij}-k)}=0\,\,\,\textrm{for all $i\not=j\in I$},
\end{displaymath}
where
$\theta_i^{(n)}=\theta_i^n/[n]_{v}!$.

There are two well-defined $\mathbb{Q}(v)$-algebra homomorphisms ${^+}:\mathbf{f}\rightarrow\mathbf{U}$ and ${^-}:\mathbf{f}\rightarrow\mathbf{U}$ satisfying $E_i=\theta_i^+$ and $F_i=\theta_i^-$ for all $i\in I$. The images of ${^+}$ and ${^-}$ are $\mathbf{U}^+$ and $\mathbf{U}^-$ respectively.

\subsection{Lusztig's symmetries}\label{subsection:2.2}

Corresponding to $i\in I$, Lusztig introduced the Lusztig's symmetry $T_i:\mathbf{U}\rightarrow \mathbf{U}$ (\cite{Lusztig_Quantum_deformations_of_certain_simple_modules_over_enveloping_algebras,Lusztig_Quantum_groups_at_roots_of_1,Lusztig_Introduction_to_quantum_groups}).
The formulas of $T_i$ on the generators are:
\begin{eqnarray}
&&T_i(E_i)=-F_i K_{i},\,\,\,T_i(F_i)=-K_{-i}E_i;\nonumber\\
&&T_i(E_j)=\sum_{r+s=-a_{ij}}(-1)^rv^{-r}E_i^{(s)}E_jE_i^{(r)}\,\,\,\textrm{for $i\neq j\in I$};\label{equation:2.3.1}\\
&&T_i(F_j)=\sum_{r+s=-a_{ij}}(-1)^rv^{r}F_i^{(r)}F_jF_i^{(s)}\,\,\,\textrm{for $i\neq j\in I$};\label{equation:2.3.2}\\
&&T_i(K_{\mu})=K_{\mu-\alpha_{i}(\mu)h_i}.\nonumber
\end{eqnarray}
Lusztig introduced two subalgebras ${_i\mathbf{f}}$ and ${^i\mathbf{f}}$ of $\mathbf{f}$.
For any $j\in I$, $i\neq j$, $m\in\mathbb{N}$, define
\begin{displaymath}
f(i,j;m)=\sum_{r+s=m}(-1)^rv^{-r(-a_{ij}-m+1)}\theta_i^{(r)}\theta_j\theta_i^{(s)}\in\mathbf{f},
\end{displaymath}
and
\begin{displaymath}
f'(i,j;m)=\sum_{r+s=m}(-1)^rv^{-r(-a_{ij}-m+1)}\theta_i^{(s)}\theta_j\theta_i^{(r)}\in\mathbf{f}.
\end{displaymath}
The subalgebras ${_i\mathbf{f}}$ and ${^i\mathbf{f}}$ are generated by $f(i,j;m)$ and $f'(i,j;m)$ respectively.

Note that ${_i\mathbf{f}}=\{x\in\mathbf{f}\,\,|\,\,T_i(x^+)\in\mathbf{U}^+\}$ and ${^i\mathbf{f}}=\{x\in\mathbf{f}\,\,|\,\,T^{-1}_i(x^+)\in\mathbf{U}^+\}$ (\cite{Lusztig_Introduction_to_quantum_groups}).
Hence there exists a unique $T_i:{_i\mathbf{f}}\rightarrow{^i\mathbf{f}}$ such that
$T_i(x^+)=T_i(x)^+$. Lusztig also showed that $\mathbf{f}$ has the following direct sum decompositions
$\mathbf{f}={_i\mathbf{f}}\bigoplus\theta_i\mathbf{f}={^i\mathbf{f}}\bigoplus\mathbf{f}\theta_i$.
Denote by $_i\pi:\mathbf{f}\rightarrow{_i\mathbf{f}}$ and $^i\pi:\mathbf{f}\rightarrow{^i\mathbf{f}}$ the natural projections.

Lusztig also proved the following formulas.

\begin{proposition}[\cite{Lusztig_Introduction_to_quantum_groups}]\label{proposition:5.1}
For any $-a_{ij}\geq m\in\mathbb{N}$, $T_i(f(i,j;m))=f'(i,j;-a_{ij}-m)$.
\end{proposition}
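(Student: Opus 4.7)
The plan is to argue by induction on $m$, converting the statement into a commutator identity in $\mathbf{U}$ that can be evaluated using Lusztig's skew-derivations on $\mathbf{f}$.

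\emph{Base case.} For $m=0$ we have $f(i,j;0)=\theta_j$, so $f(i,j;0)^+ = E_j$. The formula \eqref{equation:2.3.1} is literally the identity $T_i(E_j)=f'(i,j;-a_{ij})^+$, which gives $T_i(f(i,j;0))=f'(i,j;-a_{ij})$, matching $m'=-a_{ij}-0$.

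\emph{Recursion in $\mathbf{f}$.} A direct expansion using $\theta_i\theta_i^{(r)}=[r+1]_v\theta_i^{(r+1)}$ and the identity $[m+1]_v = v^{-p}[m+1-p]_v + v^{m+1-p}[p]_v$ shows that, for $0\le m<-a_{ij}$,
\begin{equation*}
[m+1]_v\, f(i,j;m+1) \;=\; f(i,j;m)\,\theta_i \;-\; v^{2m+a_{ij}}\,\theta_i\, f(i,j;m).
\end{equation*}
Checking this amounts to comparing the coefficient of $\theta_i^{(p)}\theta_j\theta_i^{(m+1-p)}$ on both sides, which reduces to the $q$-identity above after factoring out $(-1)^p v^{-p\alpha}$ with $\alpha=-a_{ij}-m+1$.

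\emph{Inductive step.} Assume $T_i(f(i,j;m)^+)=f'(i,j;m')^+$ with $m'=-a_{ij}-m$. Apply $^+$ and $T_i$ to the recursion, using $T_i(E_i)=-F_iK_i$ and the fact that $f'(i,j;m')^+$ has weight $m'\alpha_i+\alpha_j$, so $K_if'(i,j;m')^+=v^{2m'+a_{ij}}f'(i,j;m')^+K_i$. Since $m+m'=-a_{ij}$, the two $v$-powers cancel and we obtain
\begin{equation*}
[m+1]_v\, T_i(f(i,j;m+1)^+) \;=\; [F_i,\,f'(i,j;m')^+]\,K_i.
\end{equation*}
Next, invoke Lusztig's commutator formula
\begin{equation*}
[F_i,\,x^+] \;=\; \frac{K_i(r_ix)^+ - ({_ir}\,x)^+K_{-i}}{v-v^{-1}},
\end{equation*}
together with the characterization ${^i\mathbf{f}}=\ker r_i$. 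Since $f'(i,j;m')\in{^i\mathbf{f}}$, the first term vanishes, and multiplying by $K_i$ cancels the dangling $K_{-i}$, giving
\begin{equation*}
[m+1]_v\, T_i(f(i,j;m+1)^+) \;=\; -\frac{({_ir}\,f'(i,j;m'))^+}{v-v^{-1}}.
\end{equation*}
The induction then closes once we verify the identity ${_ir}(f'(i,j;m')) = -(v-v^{-1})[m+1]_v\, f'(i,j;m'-1)$, established by applying the twisted Leibniz rule for ${_ir}$ to the defining sum of $f'(i,j;m')$ and collapsing the result via the same $q$-numerical identity used in the recursion.

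\emph{Main obstacle.} The computational heart is the final identity for ${_ir}(f'(i,j;m'))$. Tracking the $v$-powers from the weighted Leibniz rule for ${_ir}$ acting on $\theta_i^{(s)}\theta_j\theta_i^{(r)}$, and showing that the resulting double sum collapses exactly onto $f'(i,j;m'-1)$, is a delicate bookkeeping exercise. A secondary subtlety is pinning down the sign conventions in the formula for $[F_i,x^+]$ (and therefore deciding whether $r_i$ or ${_ir}$ annihilates ${^i\mathbf{f}}$) so that the coefficient produced in the penultimate display matches $[m+1]_v$ exactly rather than merely up to a unit.
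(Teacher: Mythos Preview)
Your plan is sound and is essentially Lusztig's original algebraic argument: establish a $q$-commutator recursion for the $f(i,j;m)$, push it through $T_i$ using $T_i(E_i)=-F_iK_i$, and then close the induction via the commutator formula $[F_i,x^+]$ expressed through the skew-derivations $r_i,{_ir}$ on $\mathbf{f}$. The two items you flag as obstacles are genuine but routine: the Leibniz computation of ${_ir}\bigl(f'(i,j;m')\bigr)$ collapses by the same identity $[m+1]_v=v^{-p}[m+1-p]_v+v^{m+1-p}[p]_v$ you already used, and the convention issue (which of $r_i,{_ir}$ annihilates ${^i\mathbf{f}}$) is fixed once you write the commutator formula consistently with the paper's sign for $T_i(E_i)$.

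However, this is \emph{not} the proof the paper gives. The paper's argument in Section~\ref{subsection:5.4} is entirely geometric and is in some sense the payoff of the whole article. It realises $f(i,j;m)$ and $f'(i,j;m')$ as the trace-of-Frobenius images $\chi(\mathcal{E}^{(m)})$ and $\chi(\mathcal{E}'^{(m')})$ of (shifts of) the constant sheaves on ${_iE_{\mathbf{V}}}$ and ${^iE_{\mathbf{V}'}}$ (Corollary~\ref{corollary:5.2}, which in turn rests on the projective-resolution Theorem~\ref{theorem:5.4}); then Proposition~\ref{proposition:5.2} shows that the geometric lift $\tilde{\omega}_i$ of $T_i$ sends one constant sheaf to the other, and Theorem~\ref{theorem:3.10} transports this to the desired identity in $\mathbf{f}$. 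Your route is shorter and needs no sheaves, but the paper's route explains \emph{why} the formula holds by exhibiting it as the shadow of an equivalence of categories, and simultaneously produces the projective resolutions of the standard modules $K_m$---information your algebraic induction does not see.
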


The formulas (\ref{equation:2.3.1}) and (\ref{equation:2.3.2}) are two special cases of Proposition \ref{proposition:5.1}.

\section{Geometric realizations}\label{section:3}

\subsection{Geometric realization and canonical basis of $\mathbf{f}$}\label{subsection:3.1}

In this subsection, we shall review the geometric realization of $\mathbf{f}$ introduced by Lusztig (\cite{Lusztig_Canonical_bases_arising_from_quantized_enveloping_algebra,Lusztig_Quivers_perverse_sheaves_and_the_quantized_enveloping_algebras,Lusztig_Introduction_to_quantum_groups}).

A quiver $Q=(I,H,s,t)$ consists of a vertex set $I$, an arrow set $H$, and two maps $s,t:H\rightarrow I$ such that an arrow $\rho\in H$ starts at $s(\rho)$ and terminates at $t(\rho)$. Let $h_{ij}=\#\{i\rightarrow j\}$, $a_{ij}=h_{ij}+h_{ji}$ and $\mathbf{f}$ be the Lusztig's algebra corresponding to $A=(a_{ij})$.
Let $p$ be a prime and $q$ be a power of $p$. Denote by $\mathbb{F}_q$ the finite field with $q$ elements and $\mathbb{K}=\overline{\mathbb{F}}_q$.

For a finite dimensional $I$-graded $\mathbb{K}$-vector space $\mathbf{V}=\bigoplus_{i\in I}V_i$, define
$$E_\mathbf{V}=\bigoplus_{\rho\in H}\textrm{Hom}_{\mathbb{K}}(V_{s(\rho)},V_{t(\rho)}).$$
The dimension vector of $\mathbf{V}$ is defined as $\underline{\dim}\mathbf{V}=\sum_{i\in I}(\dim_{\mathbb{K}}V_{i})i\in\mathbb{N}I$. The algebraic group $G_{\mathbf{V}}=\prod_{i\in I}GL_{\mathbb{K}}(V_i)$ acts on $E_\mathbf{V}$ naturally.

Fix a nonzero element $\nu\in\mathbb{N}I$. Let
$$Y_{\nu}=\{\mathbf{y}=(\mathbf{i},\mathbf{a})\,\,|\,\,\sum_{l=1}^{k}a_li_l=\nu\},$$
where $\mathbf{i}=(i_1,i_2,\ldots,i_k),\,\,i_l\in I$, $\mathbf{a}=(a_1,a_2,\ldots,a_k),\,\,a_l\in\mathbb{N}$, and $$I^{\nu}=\{\mathbf{i}=(i_1,i_2,\ldots,i_k)\,\,|\,\,\sum_{l=1}^{k}i_l=\nu\}.$$
Fix a finite dimensional $I$-graded $\mathbb{K}$-vector space $\mathbf{V}$ such that $\underline{\dim}\mathbf{V}=\nu$.
For any element $\mathbf{y}=(\mathbf{i},\mathbf{a})$,
a flag of type $\mathbf{y}$ in $\mathbf{V}$ is a sequence
$$\phi=(\mathbf{V}=\mathbf{V}^k\supset\mathbf{V}^{k-1}\supset\dots\supset\mathbf{V}^0=0)$$
of $I$-graded $\mathbb{K}$-vector spaces such that $\underline{\dim}\mathbf{V}^l/\mathbf{V}^{l-1}=a_li_l$. Let $F_{\mathbf{y}}$ be the variety of all flags of type $\mathbf{y}$ in $\mathbf{V}$. For any $x\in E_{\mathbf{V}}$, a flag $\phi$ is called $x$-stable if $x_{\rho}(V^l_{s(\rho)})\subset{V}^l_{t(\rho)}$ for all $l$ and all $\rho\in H$. Let
$$\tilde{F}_{\mathbf{y}}=\{(x,\phi)\in E_{\mathbf{V}}\times F_{\mathbf{y}}\,\,|\,\,\textrm{$\phi$ is $x$-stable}\}$$
and $\pi_{\mathbf{y}}:\tilde{F}_{\mathbf{y}}\rightarrow E_{\mathbf{V}}$
be the projection to $E_{\mathbf{V}}$.

Let
$\bar{\mathbb{Q}}_{l}$ be the $l$-adic field and
$\mathcal{D}_{G_{\mathbf{V}}}(E_{\mathbf{V}})$ be the bounded $G_{\mathbf{V}}$-equivariant derived category of complexes of $l$-adic sheaves on $E_{\mathbf{V}}$. For each $\mathbf{y}\in{Y}_{\nu}$, $\mathcal{L}_{\mathbf{y}}=(\pi_{\mathbf{y}})_!(\mathbf{1}_{\tilde{F}_{\mathbf{y}}})[d_{\mathbf{y}}]\in\mathcal{D}_{G_{\mathbf{V}}}(E_{\mathbf{V}})$ is a semisimple perverse sheaf, where $d_{\mathbf{y}}=\dim\tilde{F}_{\mathbf{y}}$.
Let $\mathcal{P}_{\mathbf{V}}$ be the set of isomorphism classes of simple perverse sheaves $\mathcal{L}$ on $E_{\mathbf{V}}$ such that $\mathcal{L}[r]$ appears as a direct summand of $\mathcal{L}_{\mathbf{i}}$ for some $\mathbf{i}\in I^{\nu}$ and $r\in\mathbb{Z}$. Let $\mathcal{Q}_{\mathbf{V}}$ be the full subcategory of $\mathcal{D}_{G_{\mathbf{V}}}(E_{\mathbf{V}})$ consisting of all complexes which are isomorphic to finite direct sums of complexes in the set
$\{\mathcal{L}[r]\,\,|\,\,\mathcal{L}\in\mathcal{P}_{\mathbf{V}},r\in\mathbb{Z}\}$.

Let $K(\mathcal{Q}_{\mathbf{V}})$ be the Grothendieck group of $\mathcal{Q}_{\mathbf{V}}$.
Define $$v^{\pm}[\mathcal{L}]=[\mathcal{L}[\pm1](\pm\frac{1}{2})],$$
where $\mathcal{L}(d)$ is the Tate twist of $\mathcal{L}$. Then, $K(\mathcal{Q}_{\mathbf{V}})$ is a free $\mathcal{A}$-module, where $\mathcal{A}=\mathbb{Z}[v,v^{-1}]$.
Define $$K(\mathcal{Q})=\bigoplus_{\nu\in\mathbb{N}I}K(\mathcal{Q}_{\mathbf{V}}).$$

For $\nu,\nu',\nu''\in\mathbb{N}I$ such that $\nu=\nu'+\nu''$ and three $I$-graded $\mathbb{K}$-vector spaces $\mathbf{V}$, $\mathbf{V}'$, $\mathbf{V}''$ such that $\underline{\dim}\mathbf{V}=\nu$, $\underline{\dim}\mathbf{V}'=\nu'$, $\underline{\dim}\mathbf{V}''=\nu''$, Lusztig constructed a functor
$$\ast:\mathcal{Q}_{\mathbf{V}'}\times\mathcal{Q}_{\mathbf{V}''}\rightarrow\mathcal{Q}_{\mathbf{V}}.$$
This functor induces an associative $\mathcal{A}$-bilinear multiplication
\begin{eqnarray*}
\circledast:K(\mathcal{Q}_{\mathbf{V}'})\times K(\mathcal{Q}_{\mathbf{V}''})&\rightarrow&K(\mathcal{Q}_{\mathbf{V}})\\
([\mathcal{L}']\,,\,[\mathcal{L}''])&\mapsto&[\mathcal{L}']\circledast[\mathcal{L}'']=[\mathcal{L}'\circledast\mathcal{L}'']
\end{eqnarray*}
where $\mathcal{L}'\circledast\mathcal{L}''=(\mathcal{L}'\ast\mathcal{L}'')[m_{\nu'\nu''}](\frac{m_{\nu'\nu''}}{2})$
and $m_{\nu'\nu''}=\sum_{\rho\in H}\nu'_{s(\rho)}\nu''_{t(\rho)}-\sum_{i\in I}\nu'_i\nu''_i$.
Then $K(\mathcal{Q})$ becomes an associative $\mathcal{A}$-algebra and the set $\{[\mathcal{L}]\,\,|\,\,\mathcal{L}\in\mathcal{P}_{\mathbf{V}}\}$ is a basis of $K(\mathcal{Q}_{\mathbf{V}})$.

\begin{theorem}[\cite{Lusztig_Quivers_perverse_sheaves_and_the_quantized_enveloping_algebras}]\label{theorem:3.1}
There is a unique $\mathcal{A}$-algebra isomorphism
$$\lambda_{\mathcal{A}}:K(\mathcal{Q})\rightarrow\mathbf{f}_{\mathcal{A}}$$
such that $\lambda_{\mathcal{A}}(\mathcal{L}_{\mathbf{y}})=\theta_{\mathbf{y}}$ for all $\mathbf{y}\in Y_{\nu}$, where $\theta_{\mathbf{y}}=\theta_{i_1}^{(a_1)}\theta_{i_2}^{(a_2)}\cdots\theta_{i_k}^{(a_k)}$ and $\mathbf{f}_{\mathcal{A}}$ is the integral form of $\mathbf{f}$.
\end{theorem}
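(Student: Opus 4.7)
The plan is to construct an $\mathcal{A}$-algebra homomorphism $\mu:\mathbf{f}_{\mathcal{A}}\rightarrow K(\mathcal{Q})$ sending each generator $\theta_i$ to $[\mathcal{L}_i]$, where $\mathcal{L}_i$ is the unique element of $\mathcal{P}_{\mathbf{V}}$ in the case $\underline{\dim}\mathbf{V}=i$ (so that $E_{\mathbf{V}}$ is a point), and then to show that $\mu$ is an isomorphism. Taking $\lambda_{\mathcal{A}}:=\mu^{-1}$ gives the required map; the identity $\lambda_{\mathcal{A}}([\mathcal{L}_{\mathbf{y}}])=\theta_{\mathbf{y}}$ will follow once I observe that $\mathcal{L}_{\mathbf{y}}=(\pi_{\mathbf{y}})_!(\mathbf{1})[d_{\mathbf{y}}]$, with $\mathbf{y}=(\mathbf{i},\mathbf{a})$, coincides (after absorbing the Tate shifts built into $\circledast$) with the iterated convolution $\mathcal{L}_{(i_1,a_1)}\circledast\cdots\circledast\mathcal{L}_{(i_k,a_k)}$ of divided-power sheaves, matching $\theta_{\mathbf{y}}=\theta_{i_1}^{(a_1)}\cdots\theta_{i_k}^{(a_k)}$.

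For $\mu$ to be well defined I must verify the quantum Serre relations for $[\mathcal{L}_i]$. Fix $i\neq j$ and take $\nu=(1-a_{ij})i+j$. Since $E_{k i\mathbf{V}}$ is a point, $\mathcal{L}_i^{\circledast k}=[k]_v!\,[\mathcal{L}_{(i,k)}]$ in $K(\mathcal{Q})$, so it suffices to show
\begin{displaymath}
\sum_{k=0}^{1-a_{ij}}(-1)^k[\mathcal{L}_{(i,k)}]\circledast[\mathcal{L}_j]\circledast[\mathcal{L}_{(i,1-a_{ij}-k)}]=0
\end{displaymath}
in $K(\mathcal{Q}_\mathbf{V})$. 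This is done via a $G_\mathbf{V}$-orbit stratification of $E_\mathbf{V}$ and the cohomological computation of the corresponding partial flag bundles, where the alternating sum cancels stratum by stratum. Surjectivity of $\mu$ is then immediate: each $[\mathcal{L}_{\mathbf{y}}]$ is in the image, and by construction of $\mathcal{P}_{\mathbf{V}}$ every simple class $[\mathcal{L}]$ is a direct summand of some $[\mathcal{L}_{\mathbf{y}}]$, so the $[\mathcal{L}_{\mathbf{y}}]$ span $K(\mathcal{Q}_{\mathbf{V}})$ over $\mathcal{A}$.

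For injectivity I would introduce the restriction functor $\operatorname{Res}:\mathcal{D}_{G_{\mathbf{V}}}(E_{\mathbf{V}})\rightarrow\mathcal{D}_{G_{\mathbf{V}'}\times G_{\mathbf{V}''}}(E_{\mathbf{V}'}\times E_{\mathbf{V}''})$ built from the correspondence $E_{\mathbf{V}'}\times E_{\mathbf{V}''}\xleftarrow{\kappa}F\xrightarrow{\iota}E_{\mathbf{V}}$, where $F$ parametrizes pairs $(x,W)$ of a representation $x$ and an $x$-stable $I$-graded subspace $W\subset\mathbf{V}$ of dimension $\nu''$. A standard argument shows $\operatorname{Res}$ preserves $\mathcal{Q}$ and descends to a coproduct $r$ on $K(\mathcal{Q})$ making it a twisted $\mathcal{A}$-bialgebra. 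The geometric Green-type formula $r(b\circledast c)=r(b)\circledast r(c)$ with the appropriate quantum twist translates under $\mu$ into Lusztig's coproduct on $\mathbf{f}$. Consequently Lusztig's non-degenerate bilinear form on $\mathbf{f}$ transports to the Ext-pairing $([\mathcal{L}],[\mathcal{L}'])=\sum_n\dim\operatorname{Ext}^n(\mathcal{L},\mathcal{L}')\,v^n$ on $K(\mathcal{Q})$, which is likewise non-degenerate; combined with surjectivity of $\mu$ this forces bijectivity.

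The hardest step is the geometric Green's formula: verifying that the composition of the induction and restriction correspondences fits into a diagram that, modulo an affine-bundle factor, is cartesian, so that proper base change produces the twisted multiplicativity of $r$. Once this compatibility is in place, together with the Serre cancellation from the first step, the remaining arguments are formal consequences of non-degeneracy of the paired bilinear forms.
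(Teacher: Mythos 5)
This theorem is stated in the paper as a citation to Lusztig's work on quivers and perverse sheaves; the paper itself gives no proof, so there is no internal argument to compare against. Your proposal reconstructs Lusztig's original proof in its essential outline: define $\mu$ on generators by $\theta_i\mapsto[\mathcal{L}_i]$; verify the quantum Serre relations in the rank-two representation space $E_{\mathbf{V}}$ with $\nu=(1-a_{ij})i+j$ (here the orbit stratification is finite and the sheaves $\mathcal{L}_{(i,k)}\ast\mathcal{L}_j\ast\mathcal{L}_{(i,1-a_{ij}-k)}$ decompose against the finitely many orbit IC sheaves with multiplicities that cancel in the alternating sum); note surjectivity from the definition of $\mathcal{Q}_{\mathbf{V}}$; and obtain injectivity by transporting Lusztig's non-degenerate form via the restriction/coproduct compatibility (the ``geometric Green formula''). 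All of this matches the structure of Lusztig's argument. A couple of small remarks to tighten what you wrote: the identity $[\mathcal{L}_i]^{\circledast k}=[k]_v!\,[\mathcal{L}_{(i,k)}]$ indeed holds in $K(\mathcal{Q})$ once one tracks the Tate-twist normalization of $\circledast$, and for injectivity you only need non-degeneracy of the form on $\mathbf{f}$ together with the fact that $\mu$ intertwines the two forms — non-degeneracy of the Ext-pairing on $K(\mathcal{Q})$ is true but not required. The part you flag as hardest (the cartesian-modulo-affine-bundle diagram for $\operatorname{Res}\circ\operatorname{Ind}$) is indeed where the real geometric content lies; stated as an outline, your proposal is correct and follows the same route as the cited source.
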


Let $\mathbf{B}_{\nu}=\{\mathbf{b}_{\mathcal{L}}=\lambda_{\mathcal{A}}([\mathcal{L}])\,\,|\,\,\mathcal{L}\in\mathcal{P}_{\mathbf{V}}\}$
and $\mathbf{B}=\sqcup_{\nu\in\mathbb{N}I}\mathbf{B}_{\nu}$.
Then $\mathbf{B}$ is the canonical basis of $\mathbf{f}$ introduced by Lusztig in \cite{Lusztig_Canonical_bases_arising_from_quantized_enveloping_algebra,Lusztig_Quivers_perverse_sheaves_and_the_quantized_enveloping_algebras}.

\subsection{Geometric realizations of ${_i\mathbf{f}}$ and ${^i\mathbf{f}}$}\label{subsection:3.2}
\label{subsec:i_f_geometric}

Assume that $i\in I$ is a sink.
Let $\mathbf{V}$ be a finite dimensional $I$-graded $\mathbb{K}$-vector space  such that $\underline{\dim}\mathbf{V}=\nu$. Consider a subvariety ${_iE}_\mathbf{V}$ of $E_\mathbf{V}$
$$
{_iE}_{\mathbf{V}}=\{x\in E_{\mathbf{V}}\,\,|\,\,\bigoplus_{h\in H,t(h)=i}x_{h}:\bigoplus_{h\in H,t(h)=i}V_{s(h)}\rightarrow V_i \,\,\,\textrm{is surjective}\}.
$$
Let $j_{\mathbf{V}}:{_iE}_{\mathbf{V}}\rightarrow E_{\mathbf{V}}$ be the canonical embedding.
For any $\mathbf{y}=(\mathbf{i},\mathbf{a})\in{Y}_{\nu}$, let
\begin{displaymath}
{_i\tilde{F}_{\mathbf{y}}}=\{(x,\phi)\in {_iE}_{\mathbf{V}}\times F_{\mathbf{y}}\,\,|\,\,\textrm{$\phi$ is $x$-stable}\}
\end{displaymath}
and
${_i\pi}_{\mathbf{y}}:{_i\tilde{F}}_{\mathbf{y}}\rightarrow {_iE}_{\mathbf{V}}$
be the projection to ${_iE}_{\mathbf{V}}$.

For any $\mathbf{y}\in{Y}_{\nu}$,
$_i\mathcal{L}_{\mathbf{y}}=({_i\pi}_{\mathbf{y}})_!(\mathbf{1}_{{_i\tilde{F}}_{\mathbf{y}}})[d_{\mathbf{y}}]\in\mathcal{D}_{G_{\mathbf{V}}}(_iE_{\mathbf{V}})$ is a semisimple perverse sheaf.
Let ${_i\mathcal{P}}_{\mathbf{V}}$ be the set of isomorphism classes of simple perverse sheaves $\mathcal{L}$ on ${_iE}_{\mathbf{V}}$ such that $\mathcal{L}[r]$ appears as a direct summand of ${_i\mathcal{L}}_{\mathbf{i}}$ for some $\mathbf{i}\in I^{\nu}$ and $r\in\mathbb{Z}$. Let ${_i\mathcal{Q}}_{\mathbf{V}}$ be the full subcategory of $\mathcal{D}_{G_{\mathbf{V}}}(_iE_{\mathbf{V}})$ consisting of all complexes which are isomorphic to finite direct sums of complexes in the set $\{\mathcal{L}[r]\,\,|\,\,\mathcal{L}\in{_i\mathcal{P}}_{\mathbf{V}},r\in\mathbb{Z}\}$.

Let $K(_i\mathcal{Q}_{\mathbf{V}})$ be the Grothendieck group of $_i\mathcal{Q}_{\mathbf{V}}$ and
$$K(_i\mathcal{Q})=\bigoplus_{[V]}K(_i\mathcal{Q}_{\mathbf{V}}).$$
Naturally, we have two functors
$j_{\mathbf{V}!}:\mathcal{D}_{G_{\mathbf{V}}}(_iE_{\mathbf{V}})\rightarrow \mathcal{D}_{G_{\mathbf{V}}}(E_{\mathbf{V}})$ and
$j^{\ast}_{\mathbf{V}}:\mathcal{D}_{G_{\mathbf{V}}}(E_{\mathbf{V}})\rightarrow\mathcal{D}_{G_{\mathbf{V}}}(_iE_{\mathbf{V}})$.

For any $\mathbf{y}\in{Y}_{\nu}$, we have the following fiber product
$$\xymatrix{_i\tilde{F}_{\mathbf{y}}\ar[r]^-{\tilde{j}_\mathbf{V}}\ar[d]^-{_i\pi_{y}}&\tilde{F}_{\mathbf{y}}
\ar[d]^-{\pi_{y}}\\_iE_{\mathbf{V}}\ar[r]^-{j_\mathbf{V}}&E_{\mathbf{V}}}$$
So
\begin{equation}\label{equation:3.2.2}
j^{\ast}_\mathbf{V}\mathcal{L}_{\mathbf{y}}=j^{\ast}_\mathbf{V}(\pi_{y})_!(\mathbf{1}_{{\tilde{F}}_{\mathbf{y}}})[d_{\mathbf{y}}]
=({_i\pi}_{y})_!\tilde{j}^{\ast}_\mathbf{V}(\mathbf{1}_{{\tilde{F}}_{\mathbf{y}}})[d_{\mathbf{y}}]
=({_i\pi}_{y})_!(\mathbf{1}_{{_i\tilde{F}}_{\mathbf{y}}})[d_{\mathbf{y}}]={_i\mathcal{L}}_{\mathbf{y}}.
\end{equation}
That is $j^{\ast}_{\mathbf{V}}(\mathcal{Q}_{\mathbf{V}})={_i\mathcal{Q}}_{\mathbf{V}}$. Hence $j^{\ast}_{\mathbf{V}}:\mathcal{Q}_{\mathbf{V}}\rightarrow{_i\mathcal{Q}}_{\mathbf{V}}$ and $j^{\ast}:K(\mathcal{Q})\rightarrow K({_i\mathcal{Q}})$ can be defined.

Consider the following diagram
\begin{equation}\label{equation:3.2.1}
\xymatrix{
0\ar[r] & \theta_i\mathbf{f}_{\mathcal{A}}\ar[r]^-{i}& \mathbf{f}_{\mathcal{A}}\ar[r]^-{_i\pi_{\mathcal{A}}}\ar[d]^-{\lambda'_{\mathcal{A}}}    & {_i\mathbf{f}}_{\mathcal{A}}\ar[r]\ar@{.>}[d]    & 0 \\
        &                          & K(\mathcal{Q})\ar[r]^-{j^{\ast}}& K(_i\mathcal{Q})\ar[r]& 0
}
\end{equation}
where $\lambda'_{\mathcal{A}}$ is the inverse of $\lambda_{\mathcal{A}}$.
Since $j^{\ast}\circ{\lambda'_{\mathcal{A}}}\circ{i}=0$,
there exists a map ${_i\lambda'_{\mathcal{A}}}:{_i\mathbf{f}}_{\mathcal{A}}\rightarrow K(_i\mathcal{Q})$ such that the above diagram (\ref{equation:3.2.1}) commutes.

\begin{proposition}\label{theorem:3.5}
The map ${_i\lambda'_{\mathcal{A}}}:{_i\mathbf{f}}_{\mathcal{A}}\rightarrow K(_i\mathcal{Q})$ is an isomorphism of $\mathcal{A}$-algebras.
\end{proposition}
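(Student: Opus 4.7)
The plan is to complete the diagram chase begun in (\ref{equation:3.2.1}). Since $\lambda'_{\mathcal{A}}$ is already an $\mathcal{A}$-algebra isomorphism by Theorem \ref{theorem:3.1}, and the top row of (\ref{equation:3.2.1}) is a split short exact sequence arising from the decomposition $\mathbf{f}={_i\mathbf{f}}\oplus\theta_i\mathbf{f}$, the assertion reduces to three verifications: (i) the pullback $j^{\ast}$ is surjective; (ii) $\lambda'_{\mathcal{A}}(\theta_i\mathbf{f}_{\mathcal{A}})=\ker(j^{\ast})$; and (iii) the induced map ${_i\lambda'_{\mathcal{A}}}$ is multiplicative.

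For (i), the identity $j^{\ast}_{\mathbf{V}}\mathcal{L}_{\mathbf{y}}={_i\mathcal{L}}_{\mathbf{y}}$ established in (\ref{equation:3.2.2}) shows that $j^{\ast}_{\mathbf{V}}$ maps the generators of $\mathcal{Q}_{\mathbf{V}}$ to the generators of ${_i\mathcal{Q}}_{\mathbf{V}}$. Because $j_{\mathbf{V}}$ is an open embedding, $j^{\ast}_{\mathbf{V}}$ preserves direct summands of perverse sheaves, so every simple object of ${_i\mathcal{P}}_{\mathbf{V}}$ lies in the $j^{\ast}_{\mathbf{V}}$-image of some simple in $\mathcal{P}_{\mathbf{V}}$, giving surjectivity on Grothendieck groups.

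The main obstacle is (ii). The inclusion $\lambda'_{\mathcal{A}}(\theta_i\mathbf{f}_{\mathcal{A}})\subseteq\ker(j^{\ast})$ is the content of the commutativity of (\ref{equation:3.2.1}) already noted in the excerpt. For the reverse inclusion, since $j_{\mathbf{V}}$ is an open embedding, each simple $\mathcal{L}\in\mathcal{P}_{\mathbf{V}}$ falls into one of two disjoint classes: either $j^{\ast}_{\mathbf{V}}\mathcal{L}$ is a simple object of ${_i\mathcal{P}}_{\mathbf{V}}$ (when $\operatorname{supp}(\mathcal{L})$ meets ${_iE}_{\mathbf{V}}$) or $j^{\ast}_{\mathbf{V}}\mathcal{L}=0$ (when $\operatorname{supp}(\mathcal{L})\subseteq E_{\mathbf{V}}\setminus{_iE}_{\mathbf{V}}$). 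Hence $\ker(j^{\ast})$ is the $\mathcal{A}$-submodule of $K(\mathcal{Q})$ spanned by the classes $[\mathcal{L}]$ with $j^{\ast}_{\mathbf{V}}\mathcal{L}=0$. The desired equality then reduces to Lusztig's compatibility of the canonical basis $\mathbf{B}$ with the decomposition $\mathbf{f}={_i\mathbf{f}}\oplus\theta_i\mathbf{f}$ (\cite{Lusztig_Introduction_to_quantum_groups}): $\mathbf{B}\cap\theta_i\mathbf{f}$ is an $\mathcal{A}$-basis of $\theta_i\mathbf{f}_{\mathcal{A}}$, and $\mathbf{b}_{\mathcal{L}}\in\mathbf{B}\cap\theta_i\mathbf{f}$ precisely when $\operatorname{supp}(\mathcal{L})\subseteq E_{\mathbf{V}}\setminus{_iE}_{\mathbf{V}}$, i.e., when $j^{\ast}_{\mathbf{V}}\mathcal{L}=0$.

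For (iii), I would verify that $j^{\ast}$ intertwines the convolution products $\circledast$ on $K(\mathcal{Q})$ and $K({_i\mathcal{Q}})$. This follows from base change along the flag correspondence defining Lusztig's functor $\ast$, together with the observation that, because $i$ is a sink, the surjectivity condition cutting out ${_iE}_{\mathbf{V}}$ is controlled by the top step of any filtration and is therefore compatible with the $(\mathbf{V}',\mathbf{V}'')$-decomposition used in the correspondence. Combining (i)--(iii), a diagram chase on (\ref{equation:3.2.1}), or equivalently the five lemma, then delivers that ${_i\lambda'_{\mathcal{A}}}$ is the claimed $\mathcal{A}$-algebra isomorphism.
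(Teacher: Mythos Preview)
Your argument is correct, but it takes a genuinely different route from the paper. The paper defers the proof to Section~\ref{subsection:4.2} and establishes injectivity of ${_i\lambda'_{\mathcal{A}}}$ by passing to the Hall algebra side: it shows that the triangle ${_i\mathbf{f}}_{\mathcal{A}}\xrightarrow{{_i\lambda'_{\mathcal{A}}}}K({_i\mathcal{Q}})\xrightarrow{{_i\chi_{\mathcal{A}}}}{_i\mathcal{F}}_{\mathcal{A}}$ commutes with Lusztig's isomorphism ${_i\varpi_{\mathcal{A}}}:{_i\mathbf{f}}_{\mathcal{A}}\to{_i\mathcal{F}}_{\mathcal{A}}$ (Proposition~\ref{theorem:4.5}), whence ${_i\lambda'_{\mathcal{A}}}$ is injective because ${_i\varpi_{\mathcal{A}}}$ is; surjectivity is read off from diagram~(\ref{equation:3.2.1}) as you do. In particular, the paper never computes $\ker(j^{\ast})$ directly.

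Your approach instead identifies $\ker(j^{\ast})$ sheaf-theoretically via the open-restriction dichotomy for simple perverse sheaves, and then matches it with $\lambda'_{\mathcal{A}}(\theta_i\mathbf{f}_{\mathcal{A}})$ by invoking Lusztig's compatibility of $\mathbf{B}$ with $\theta_i\mathbf{f}$ together with its geometric interpretation as a support condition. This is more self-contained in the sense that it stays within Section~\ref{section:3} and avoids the trace-function machinery of Section~4; the price is that you import a somewhat deeper fact about the canonical basis (the geometric description of $\mathbf{B}\cap\theta_i\mathbf{f}$), whereas the paper only needs the function-level isomorphism ${_i\varpi}$. Both approaches ultimately rest on results of Lusztig, just different ones. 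One small caution on your point~(iii): the paper does not explicitly construct a convolution on $K({_i\mathcal{Q}})$ in Section~\ref{subsection:3.2}, and its own proof treats the algebra structure rather implicitly (via the commuting triangle with ${_i\mathcal{F}}_{\mathcal{A}}$); your base-change sketch is the right idea, but note that for a filtration with subobject $\mathbf{V}'$ the surjectivity into $V_i$ does \emph{not} force surjectivity into $V'_i$, so the compatibility of $j^{\ast}$ with $\ast$ requires a slightly more careful formulation than ``controlled by the top step''.
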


The proof of Proposition \ref{theorem:3.5} will be given in Section \ref{subsection:4.2}.

Assume that $i\in I$ is a source. We can give a geometric realization of ${^i\mathbf{f}}$ similarly. Consider a subvariety ${^iE}_\mathbf{V}$ of $E_\mathbf{V}$
$$
{^iE}_{\mathbf{V}}=\{x\in E_{\mathbf{V}}\,\,|\,\,\bigoplus_{h\in H,s(h)=i}x_{h}:V_i\rightarrow\bigoplus_{h\in H,s(h)=i}V_{t(h)}\,\,\,\textrm{is injective}\}.
$$
Let $j_{\mathbf{V}}:{^iE}_{\mathbf{V}}\rightarrow E_{\mathbf{V}}$ be the canonical embedding.
The definitions of ${^i\mathcal{Q}}_{\mathbf{V}}$, $K(^i\mathcal{Q}_{\mathbf{V}})$ and $K(^i\mathcal{Q})$ are similar to those of ${_i\mathcal{Q}}_{\mathbf{V}}$, $K(_i\mathcal{Q}_{\mathbf{V}})$ and $K(_i\mathcal{Q})$ respectively.
We can also define $j^{\ast}_{\mathbf{V}}:\mathcal{Q}_{\mathbf{V}}\rightarrow{^i\mathcal{Q}}_{\mathbf{V}}$, $j^{\ast}:K(\mathcal{Q})\rightarrow K({^i\mathcal{Q}})$ and ${^i\lambda'_{\mathcal{A}}}:{^i\mathbf{f}}_{\mathcal{A}}\rightarrow K(^i\mathcal{Q})$.

Similarly to Proposition \ref{theorem:3.5}, we have the following proposition.

\begin{proposition}\label{theorem:3.8}
The map ${^i\lambda'_{\mathcal{A}}}:{^i\mathbf{f}}_{\mathcal{A}}\rightarrow K(^i\mathcal{Q})$ is an isomorphism of $\mathcal{A}$-algebras.
\end{proposition}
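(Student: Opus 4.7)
The plan is to mirror the proof of Proposition~\ref{theorem:3.5} almost verbatim, with sinks replaced by sources, the subvariety ${_iE}_{\mathbf{V}}$ replaced by ${^iE}_{\mathbf{V}}$, the projection ${_i\pi}$ replaced by ${^i\pi}$, and the left ideal $\theta_i\mathbf{f}_{\mathcal{A}}$ replaced by the right ideal $\mathbf{f}_{\mathcal{A}}\theta_i$. The relevant commutative diagram is the source-case analogue of diagram (\ref{equation:3.2.1}):
\begin{displaymath}
\xymatrix{
0\ar[r] & \mathbf{f}_{\mathcal{A}}\theta_i\ar[r]^-{i}& \mathbf{f}_{\mathcal{A}}\ar[r]^-{^i\pi_{\mathcal{A}}}\ar[d]^-{\lambda'_{\mathcal{A}}}    & {^i\mathbf{f}}_{\mathcal{A}}\ar[r]\ar@{.>}[d]^-{^i\lambda'_{\mathcal{A}}}    & 0 \\
        &                          & K(\mathcal{Q})\ar[r]^-{j^{\ast}}& K(^i\mathcal{Q})\ar[r]& 0
}
\end{displaymath}
and the task is to show that the bottom row is exact and that ${^i\lambda'_{\mathcal{A}}}$ is a well-defined ring isomorphism.

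First, I would establish the source-case analogue of the fibre-product identity (\ref{equation:3.2.2}), namely $j^{\ast}_{\mathbf{V}}\mathcal{L}_{\mathbf{y}}={^i\mathcal{L}}_{\mathbf{y}}$, so that $j^{\ast}_{\mathbf{V}}(\mathcal{Q}_{\mathbf{V}})={^i\mathcal{Q}}_{\mathbf{V}}$ and hence $j^{\ast}:K(\mathcal{Q})\to K({^i\mathcal{Q}})$ is surjective. Second, I would verify that $j^{\ast}$ is an $\mathcal{A}$-algebra homomorphism by checking compatibility with Lusztig's induction functor $\ast$; concretely, one shows that the restriction to ${^iE}_{\mathbf{V}}$ commutes with the convolution used to define $\circledast$, exactly as in the sink case, using smooth base change along the standard induction diagram. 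Third, one identifies the kernel: the geometric observation is that ${^iE}_{\mathbf{V}}$ is precisely the locus of representations $x$ in which the simple $S_i$ does not appear as a subrepresentation (since, for a source $i$, a nonzero subrepresentation supported at $i$ would lie in the common kernel of the arrows out of $i$). Consequently, any simple perverse sheaf in $\mathcal{P}_{\mathbf{V}}$ whose support is contained in the closed complement $E_{\mathbf{V}}\setminus{^iE}_{\mathbf{V}}$ restricts to zero, and I would argue that these are exactly the simples whose classes lie in $\lambda'_{\mathcal{A}}(\mathbf{f}_{\mathcal{A}}\theta_i)$. A diagram chase then produces ${^i\lambda'_{\mathcal{A}}}$ and shows it is bijective.

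The main obstacle is the identification $\ker(j^{\ast})=\lambda'_{\mathcal{A}}(\mathbf{f}_{\mathcal{A}}\theta_i)$. The inclusion $\supseteq$ is the easier direction, obtained by reducing to monomials $\theta_{\mathbf{y}}\theta_i$ and using the geometric description of the induction $\mathcal{L}_{\mathbf{y}}\ast\mathcal{L}_{(i)}$, whose support visibly avoids ${^iE}_{\mathbf{V}}$. The reverse inclusion is more delicate: one must show that every simple perverse sheaf in $\mathcal{P}_{\mathbf{V}}$ that vanishes after restriction to ${^iE}_{\mathbf{V}}$ can be expressed, modulo the lower-order terms, as a sum of induced sheaves ending in $\theta_i$. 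This should follow by decomposing $\mathcal{L}_{\mathbf{y}}$ according to whether its support meets ${^iE}_{\mathbf{V}}$, combined with the fact that $j^{\ast}$ sends $\mathcal{L}_{\mathbf{y}}$ to ${^i\mathcal{L}}_{\mathbf{y}}$ and that $\{[{^i\mathcal{L}}_{\mathbf{y}}]\}$ spans $K({^i\mathcal{Q}})$; concretely, one compares ranks of the two free $\mathcal{A}$-modules in each degree $\nu$ using the known triangular decomposition $\mathbf{f}={^i\mathbf{f}}\oplus\mathbf{f}\theta_i$ from Section~\ref{subsection:2.2}. Since Proposition~\ref{theorem:3.5} is already established (its proof being deferred to Section~\ref{subsection:4.2}), one may in fact shorten the argument by transporting it through the duality or anti-involution of $\mathbf{f}$ that interchanges $\theta_i\mathbf{f}$ with $\mathbf{f}\theta_i$ and sinks with sources, giving the conclusion with no further geometric work.
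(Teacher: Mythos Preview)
Your overall plan---mirror the proof of Proposition~\ref{theorem:3.5}---is exactly what the paper does (it writes only $\qed$ after the statement). However, your description of what that proof contains is not what Section~\ref{subsection:4.2} actually does. The paper never identifies $\ker(j^{\ast})$ geometrically. Instead it factors through the Hall algebra: one builds the sheaf--function trace map ${^i\chi}_{\mathcal{A}}:K({^i\mathcal{Q}})\to{^i\mathcal{F}}_{\mathcal{A}}$, establishes the commutative triangle ${^i\varpi}_{\mathcal{A}}={^i\chi}_{\mathcal{A}}\circ{^i\lambda'_{\mathcal{A}}}$, and then invokes Lusztig's result (the source analogue of Proposition~\ref{theorem:4.5}) that ${^i\varpi}_{\mathcal{A}}$ is an isomorphism. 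Injectivity of ${^i\lambda'_{\mathcal{A}}}$ follows immediately; surjectivity was already clear from the source analogue of diagram~(\ref{equation:3.2.1}).

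Your direct geometric route has a genuine gap at the reverse inclusion $\ker(j^{\ast})\subseteq\lambda'_{\mathcal{A}}(\mathbf{f}_{\mathcal{A}}\theta_i)$. The rank comparison you propose is circular: from $\lambda'_{\mathcal{A}}(\mathbf{f}_{\mathcal{A}}\theta_i)\subseteq\ker(j^{\ast})$ together with the splitting of $\mathcal{P}_{\mathbf{V}}$ into simples whose support meets ${^iE}_{\mathbf{V}}$ and simples supported on the complement, one only obtains $|{^i\mathcal{P}}_{\mathbf{V}}|\le\dim{^i\mathbf{f}}_\nu$, which is the same inequality the surjectivity of ${^i\lambda'_{\mathcal{A}}}$ already gives. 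Closing the gap would require knowing that every simple $L\in\mathcal{P}_{\mathbf{V}}$ supported on $E_{\mathbf{V}}\setminus{^iE}_{\mathbf{V}}$ satisfies $\lambda_{\mathcal{A}}([L])\in\mathbf{f}\theta_i$; that is Lusztig's compatibility of the canonical basis with the ideal $\mathbf{f}\theta_i$, which is no lighter than the Hall-algebra detour the paper uses. Your alternative via the anti-involution swapping sinks with sources (and $Q$ with $Q^{\mathrm{op}}$) is sound and does give a correct proof, but it is again a different route from the paper's parallel Hall-algebra argument.
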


\qed

\subsection{Geometric realization of $T_i:{_i\mathbf{f}}\rightarrow{^i\mathbf{f}}$}\label{subsection:3.4}

Assume that $i$ is a sink of $Q=(I,H,s,t)$. So $i$ is a source of $Q'=\sigma_iQ=(I,H',s,t)$, where $\sigma_iQ$ is the quiver by reversing the directions
of all arrows in $Q$ containing $i$. For any $\nu,\nu'\in\mathbb{N}I$ such that $\nu'=s_i\nu$ and $I$-graded $\mathbb{K}$-vector spaces $\mathbf{V}$,  $\mathbf{V}'$ such that $\underline{\dim}\mathbf{V}=\nu$, $\underline{\dim}\mathbf{V}'=\nu'$, consider the following correspondence (\cite{Lusztig_Canonical_bases_and_Hall_algebras,Kato_PBW_bases_and_KLR_algebras})
\begin{equation}\label{equation:3.4.2}
\xymatrix{
_iE_{\mathbf{V},Q}&Z_{\mathbf{V}\mathbf{V}'}\ar[l]_-{\alpha}\ar[r]^-{\beta}&^iE_{\mathbf{V}',Q'}},
\end{equation}
where
\begin{enumerate}
  \item[(1)]$Z_{\mathbf{V}\mathbf{V}'}$ is the subset in $E_{\mathbf{V},Q}\times E_{\mathbf{V}',Q'}$ consisting of all $(x,y)$ satisfying the following conditions
           \begin{enumerate}
                  \item[(a)]for any $h\in H$ such that $t(h)\neq i$ and $h\in H'$, $x_h=y_h$;
                  \item[(b)]the following sequence is exact
                  $$
                  \xymatrix@=15pt{0\ar[r]&}
                  \xymatrix@=50pt{V'_i\ar[r]^-{\bigoplus_{h\in H',s(h)=i}y_h}&\bigoplus_{h\in H,t(h)=i}V_{s(h)}\ar[r]^-{\bigoplus_{h\in H,t(h)=i}x_h}&V_i}
                  \xymatrix@=15pt{\ar[r]&0}
                  $$
           \end{enumerate}
  \item[(2)]$\alpha(x,y)=x$ and $\beta(x,y)=y$.
\end{enumerate}
From now on, $_iE_{\mathbf{V},Q}$ is denoted by $_iE_{\mathbf{V}}$ and $^iE_{\mathbf{V}',Q'}$ is denoted by $^iE_{\mathbf{V}'}$.
Let
$$
G_{\mathbf{V}\mathbf{V}'}=GL(V_i)\times GL(V'_i)\times\prod_{j\neq i}GL(V_j)\cong GL(V_i)\times GL(V'_i)\times\prod_{j\neq i}GL(V'_j),
$$
which acts on $Z_{\mathbf{V}\mathbf{V}'}$ naturally.

By (\ref{equation:3.4.2}), we have
\begin{equation}\label{equation:3.4.1}
\xymatrix{
\mathcal{D}_{G_{\mathbf{V}}}(_iE_{\mathbf{V}})\ar[r]^-{\alpha^{\ast}}&\mathcal{D}_{G_{\mathbf{V}\mathbf{V}'}}(Z_{\mathbf{V}\mathbf{V}'})&\mathcal{D}_{G_{\mathbf{V}'}}(^iE_{\mathbf{V}'})\ar[l]_-{\beta^{\ast}}
}.\end{equation}
Since $\alpha$ and $\beta$ are principal bundles with fibers $Aut(V'_i)$ and $Aut(V_i)$ respectively, $\alpha^{\ast}$ and $\beta^{\ast}$ are equivalences of categories by Section 2.2.5 in \cite{Bernstein_Lunts_Equivariant_sheaves_and_functors}.
Hence, for any $\mathcal{L}\in\mathcal{D}_{G_{\mathbf{V}}}(_iE_{\mathbf{V}})$£¬
there exists a unique $\mathcal{L}'\in\mathcal{D}_{G_{\mathbf{V}'}}(^iE_{\mathbf{V}'})$ such that $\alpha^{\ast}(\mathcal{L})=\beta^{\ast}(\mathcal{L}')$. Define
\begin{eqnarray*}
\tilde{\omega}_i:\mathcal{D}_{G_{\mathbf{V}}}(_iE_{\mathbf{V}})&\rightarrow&\mathcal{D}_{G_{\mathbf{V}'}}(^iE_{\mathbf{V}'})\\
\mathcal{L}&\mapsto&\mathcal{L}'[-s(\mathbf{V})](-\frac{s(\mathbf{V})}{2})
\end{eqnarray*}
where
$
s(\mathbf{V})=\dim\textrm{GL}(V_i)-\dim\textrm{GL}(V'_i).
$
Since $\alpha^{\ast}$ and $\beta^{\ast}$ are equivalences of categories, $\tilde{\omega}_i$ is also an equivalence of categories.

\begin{proposition}\label{proposition:3.9}
It holds that  $\tilde{\omega}_i({_i\mathcal{Q}}_{\mathbf{V}})={^i\mathcal{Q}}_{\mathbf{V}'}$.
\end{proposition}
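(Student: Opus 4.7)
The plan is to reduce the proposition to a statement about the standard generators of ${_i\mathcal{Q}}_{\mathbf{V}}$, and then prove it by a proper-base-change argument along the correspondence (\ref{equation:3.4.2}).

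Since $\tilde{\omega}_i$ is an equivalence of triangulated categories that preserves semisimple perverse sheaves up to the uniform shift and Tate twist by $-s(\mathbf{V})$, and since every object of ${_i\mathcal{Q}}_{\mathbf{V}}$ is isomorphic to a direct sum of shifts of simple summands of the ${_i\mathcal{L}}_{\mathbf{y}}=j^{\ast}_{\mathbf{V}}(\mathcal{L}_{\mathbf{y}})$ for $\mathbf{y}\in Y_{\nu}$ (by the definition of ${_i\mathcal{P}}_{\mathbf{V}}$ combined with (\ref{equation:3.2.2})), it suffices to produce, for each $\mathbf{y}\in Y_{\nu}$, a sequence $\mathbf{y}'\in Y_{\nu'}$ and an integer $e$ such that $\tilde{\omega}_i({_i\mathcal{L}}_{\mathbf{y}})\cong {^i\mathcal{L}}_{\mathbf{y}'}[e](e/2)$, and to verify that the resulting map $\mathbf{y}\mapsto\mathbf{y}'$ surjects onto every element of $Y_{\nu'}$ needed to generate ${^i\mathcal{Q}}_{\mathbf{V}'}$. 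The reverse inclusion $\tilde{\omega}_i^{-1}({^i\mathcal{Q}}_{\mathbf{V}'})\subseteq {_i\mathcal{Q}}_{\mathbf{V}}$ follows from the symmetric argument applied to a quasi-inverse of $\tilde{\omega}_i$.

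To construct the pairing, given $\mathbf{y}=(\mathbf{i},\mathbf{a})$, I would define $\mathbf{y}'$ by reshuffling its $i$-entries via the BGP recipe so that the total $i$-multiplicity changes from $\nu_i$ to $\nu'_i=\sum_{h:t(h)=i}\nu_{s(h)}-\nu_i$, in accordance with $\nu'=s_i\nu$. The central geometric object is the incidence variety
$$
\tilde{Z}_{\mathbf{y},\mathbf{y}'}=\{(x,y,\phi,\phi')\,:\,(x,y)\in Z_{\mathbf{V}\mathbf{V}'},\ \phi\in {_i\tilde{F}}_{\mathbf{y}}|_{x},\ \phi'\in {^i\tilde{F}}_{\mathbf{y}'}|_{y},\ \phi\leftrightarrow\phi'\},
$$
where the compatibility $\phi\leftrightarrow\phi'$ is imposed level by level through the defining short exact sequence
$0\to V'_i\to \bigoplus_{t(h)=i}V_{s(h)}\to V_i\to 0$
of $Z_{\mathbf{V}\mathbf{V}'}$. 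The key claim is that both natural projections
$$
{_i\tilde{F}}_{\mathbf{y}}\times_{{_iE}_{\mathbf{V}}}Z_{\mathbf{V}\mathbf{V}'}\longleftarrow \tilde{Z}_{\mathbf{y},\mathbf{y}'}\longrightarrow {^i\tilde{F}}_{\mathbf{y}'}\times_{{^iE}_{\mathbf{V}'}}Z_{\mathbf{V}\mathbf{V}'}
$$
are isomorphisms: an $x$-stable flag of type $\mathbf{y}$ in $\mathbf{V}$ determines, and is determined by, a $y$-stable flag of type $\mathbf{y}'$ in $\mathbf{V}'$ that is compatible with it through the exact sequence above.

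Given this, proper base change applied to the left Cartesian square along $\alpha$, together with the analogous square on the right along $\beta$, gives $\alpha^{\ast}{_i\mathcal{L}}_{\mathbf{y}}\cong \beta^{\ast}{^i\mathcal{L}}_{\mathbf{y}'}[e](e/2)$, where $e$ is recorded by comparing $d_{\mathbf{y}}$ with $d_{\mathbf{y}'}$ and the relative fibre dimensions of $\alpha$ and $\beta$. Combining with the definition $\tilde{\omega}_i(\mathcal{L})=\mathcal{L}'[-s(\mathbf{V})](-s(\mathbf{V})/2)$ yields $\tilde{\omega}_i({_i\mathcal{L}}_{\mathbf{y}})\in {^i\mathcal{Q}}_{\mathbf{V}'}$. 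The main obstacle is the combinatorial construction of $\mathbf{y}'$ and the verification that the two projections from $\tilde{Z}_{\mathbf{y},\mathbf{y}'}$ are isomorphisms in the general (not necessarily finite-type) setting, together with the dimension accounting ensuring that $e-s(\mathbf{V})$ produces exactly the claimed shift and Tate twist; the BGP-reflection prescription for $\mathbf{y}'$ is precisely what makes this bookkeeping succeed.
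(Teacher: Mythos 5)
Your strategy hinges on the assertion that for each $\mathbf{y}\in Y_{\nu}$ there is a \emph{single} type $\mathbf{y}'\in Y_{\nu'}$ and an integer $e$ with $\tilde{\omega}_i({_i\mathcal{L}}_{\mathbf{y}})\cong{^i\mathcal{L}}_{\mathbf{y}'}[e](e/2)$, witnessed by an incidence variety $\tilde{Z}_{\mathbf{y},\mathbf{y}'}$ both of whose projections are isomorphisms. That claim fails. The reflection at $i$ is not exact on representations, and correspondingly an $x$-stable flag $\phi$ of a \emph{fixed} type $\mathbf{y}$ does not determine a $y$-stable flag in $\mathbf{V}'$ of a single prescribed type: the natural candidate for the $i$-component at level $l$ is $\bigl(\bigoplus_{t(h)=i}V^{l}_{s(h)}\bigr)\cap V'_i$, and inside the exact sequence $0\to V'_i\to\bigoplus_{t(h)=i}V_{s(h)}\to V_i\to 0$ its dimension is $\sum_{t(h)=i}\dim V^{l}_{s(h)}-\dim x\bigl(\bigoplus_{t(h)=i}V^{l}_{s(h)}\bigr)$, which depends on the point $(x,\phi)$ and not merely on $\mathbf{y}$. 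So over strata of ${_i\tilde{F}}_{\mathbf{y}}$ the induced flag in $\mathbf{V}'$ has varying type, and no choice of $\mathbf{y}'$ makes both legs of your correspondence isomorphisms; this is already visible for the Kronecker quiver. Equivalently, on the algebra side $T_i$ does not send the projected monomials $_i\pi(\theta_{\mathbf{y}})$ to scalar multiples of single $^i\pi(\theta_{\mathbf{y}'})$; it is only a ring homomorphism on the generators $f(i,j;m)$, and its effect on general monomials mixes terms. The ``bookkeeping'' in your last paragraph is therefore not a missing detail but a structural obstruction: the reduction in your opening paragraph cannot be carried out.

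The paper avoids this entirely by routing through the sheaf--function dictionary. In Section~\ref{subsection:4.4} it identifies $\tilde{\omega}_i$ with Lusztig's Hall-algebra map $\omega_i$ under the trace maps $\prod_{n}{_i\chi^n}$ and $\prod_{n}{^i\chi^n}$, uses Lusztig's result that $\omega_i({_i\mathcal{F}}_{\mathbf{V}})\subset{^i\mathcal{F}}_{\mathbf{V}'}$, and then concludes via the faithfulness of the trace map on semisimple perverse sheaves (Kiehl--Weissauer, Theorem~III.12.1(3)) together with the fact that $\tilde{\omega}_i$ is an equivalence preserving semisimplicity; the reverse inclusion follows by symmetry. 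If you want a purely geometric proof you would have to show that each simple constituent of $\alpha^{\ast}{_i\mathcal{L}}_{\mathbf{y}}$ is of the form $\beta^{\ast}$ of a simple object of ${^i\mathcal{Q}}_{\mathbf{V}'}$, without trying to match flag varieties type-by-type; that is precisely the step the function-level reduction buys for free.
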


The proof of Proposition \ref{proposition:3.9} will be given in Section \ref{subsection:4.4}.

Hence, we can define
$\tilde{\omega}_i:{_i\mathcal{Q}}_\mathbf{V}\rightarrow{^i\mathcal{Q}}_{\mathbf{V}'}$
and
$\tilde{\omega}_i:K({_i\mathcal{Q}})\rightarrow K({^i\mathcal{Q}})$.
We have the following theorem.

\begin{theorem}\label{theorem:3.10}
We have the following commutative diagram
$$\xymatrix{{_i\mathbf{f}}_{\mathcal{A}}\ar[r]^-{T_i}\ar[d]^-{_i\lambda'_{\mathcal{A}}}&{^i\mathbf{f}}_{\mathcal{A}}\ar[d]^-{^i\lambda'_{\mathcal{A}}}\\K({_i\mathcal{Q}})\ar[r]^-{\tilde{\omega}_i}&K({^i\mathcal{Q}})}$$
\end{theorem}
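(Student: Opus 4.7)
The plan is to reduce the theorem to the Hall algebra realization of Lusztig's symmetry, using the sheaf-function correspondence as a bridge between the perverse-sheaf picture and the BGP reflection picture. Recall that by Lusztig's work in \cite{Lusztig_Canonical_bases_and_Hall_algebras} (and its extensions in \cite{Xiao_Yang_BGP-reflection_functors_and_Lusztig's_symmetries}), for $Q$ defined over $\mathbb{F}_q$ the composition subalgebra of the Hall algebra of representations of $Q$ realizes $\mathbf{f}$ after the specialization $v\mapsto\sqrt q$, and the BGP reflection functor at the sink $i$ induces on the appropriate subalgebras a map that realizes $T_i:{_i\mathbf{f}}\to{^i\mathbf{f}}$. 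Under this description, the subalgebra ${_i\mathbf{f}}$ corresponds precisely to functions supported on the locus ${_iE}_{\mathbf{V},Q}(\mathbb{F}_q)$, and ${^i\mathbf{f}}$ to functions on ${^iE}_{\mathbf{V}',Q'}(\mathbb{F}_q)$.

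Next I would invoke Grothendieck's trace-of-Frobenius dictionary. Any $\mathcal{L}\in{_i\mathcal{Q}}_{\mathbf{V}}$ defined over $\mathbb{F}_q$ produces a $G_{\mathbf{V}}(\mathbb{F}_q)$-invariant function $\chi_{\mathcal{L}}$ on ${_iE}_{\mathbf{V}}(\mathbb{F}_q)$, and analogously on the ${^i}$-side. Together with the compatibility of $\ast$ and the Hall multiplication established in \cite{Lusztig_Canonical_bases_and_Hall_algebras}, and with the geometric realization recalled in Theorem \ref{theorem:3.1} and Proposition \ref{theorem:3.5}, the assignment $\mathcal{L}\mapsto\chi_{\mathcal{L}}$ identifies the image of $_i\lambda'_{\mathcal{A}}$ (after specialization) with the Hall-algebra incarnation of ${_i\mathbf{f}}$. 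The same holds on the ${^i}$-side, compatibly with $^i\lambda'_{\mathcal{A}}$.

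The heart of the argument is then the observation that the correspondence
$$\xymatrix{{_iE}_{\mathbf{V}}&Z_{\mathbf{V}\mathbf{V}'}\ar[l]_-{\alpha}\ar[r]^-{\beta}&{^iE}_{\mathbf{V}'}}$$
geometrizes the BGP reflection at $i$: the defining exact sequence for $(x,y)\in Z_{\mathbf{V}\mathbf{V}'}$ is exactly the sequence expressing $y$ as the BGP reflection of $x$ at the sink $i$, with matching data on all other arrows. Consequently, at the level of $\mathbb{F}_q$-points the functor $(\beta^{\ast})^{-1}\alpha^{\ast}$ transports the function $\chi_{\mathcal{L}}$ to the function attached to its BGP reflection, while the normalizing shift $[-s(\mathbf{V})](-s(\mathbf{V})/2)$ in the definition of $\tilde{\omega}_i$ absorbs precisely the twist caused by $\alpha$ and $\beta$ being principal bundles with fibres $\mathrm{Aut}(V'_i)$ and $\mathrm{Aut}(V_i)$ respectively; this twist matches the $v$-factor built into Lusztig's Hall-algebra formula for $T_i$. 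Composing these three identifications proves commutativity of the diagram after every specialization $v\mapsto\sqrt q$, and since the structure constants of both composites in the basis $\mathbf{B}$ are Laurent polynomials in $v$, agreement on infinitely many values forces equality over $\mathcal{A}$.

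The main obstacle I anticipate is the bookkeeping of the shifts and Tate twists: the factor $[-s(\mathbf{V})](-s(\mathbf{V})/2)$ in $\tilde{\omega}_i$, the normalizing shift $[m_{\nu'\nu''}](m_{\nu'\nu''}/2)$ in $\circledast$, and the $v$-powers appearing in the Hall-algebra description of $T_i$ must all be aligned precisely so that no spurious $v$ remains. A secondary subtlety is the passage from pointwise equality of functions over all $\mathbb{F}_q$ back to an equality in the Grothendieck groups; this is handled by the standard purity/semisimplicity arguments available because all sheaves in ${_i\mathcal{Q}}_{\mathbf{V}}$ and ${^i\mathcal{Q}}_{\mathbf{V}'}$ are direct summands of the semisimple complexes ${_i\mathcal{L}}_{\mathbf{y}}$ and ${^i\mathcal{L}}_{\mathbf{y}}$, whose characteristic functions separate the isomorphism classes.
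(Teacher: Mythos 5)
Your proposal is correct and follows essentially the same route as the paper: both reduce the theorem to Lusztig's Hall-algebra realization of $T_i$ (the square involving $\omega_i$, $_i\varpi$, $^i\varpi$, cited in the paper as Theorem~\ref{theorem:4.17}), then use the sheaf-function correspondence to establish that $\tilde{\omega}_i$ on $K({_i\mathcal{Q}})$ is compatible with $\omega_i$ on ${_i\mathcal{F}}_{\mathcal{A}}$ via ${_i\chi}$ and ${^i\chi}$ (the paper's Proposition~\ref{theorem:4.18}), with the normalizations $[-s(\mathbf{V})](-s(\mathbf{V})/2)$ and $(p^n)^{-s(\mathbf{V})/2}$ matching precisely under the trace map. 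The only cosmetic divergence is that you close the argument by comparing Laurent polynomials across infinitely many specializations $v\mapsto\sqrt q$, whereas the paper packages the same content via the generic form of $\mathcal{F}$ and the isomorphisms ${_i\chi}_{\mathcal{A}}$, ${^i\chi}_{\mathcal{A}}$; these are two phrasings of the same passage to $\mathcal{A}$-coefficients.
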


The proof of Theorem \ref{theorem:3.10} will be given in Section \ref{subsection:4.4}.

\subsection{$T_i:{_i\mathbf {f}}\rightarrow {^i\mathbf {f}}$ and canonical bases}\label{subsection:3.5}

In \cite{Lusztig_Braid_group_action_and_canonical_bases}, Lusztig showed that Lusztig's symmetries and canonical bases are compatible. In this section, we shall give a geometric interpretation of this result by using the geometric realization of $T_i$.

Let $\mathbf{B}$ be the canonical basis of $\mathbf{f}$.
Since $\theta_i\mathbf{f}$ is the kernel of ${_i\pi}:\mathbf{f}\rightarrow{_i\mathbf{f}}$ and $\mathbf{B}\cap\theta_i\mathbf{f}$ is a $\mathbb{Q}(v)$-basis of $\theta_i\mathbf{f}$,
${_i\mathbf{B}}={_i\pi}(\mathbf{B})$
is a $\mathbb{Q}(v)$-basis of ${_i\mathbf{f}}$.
Similarly, ${^i\mathbf{B}}={^i\pi}(\mathbf{B})$ is a $\mathbb{Q}(v)$-basis of ${^i\mathbf{f}}$.

Lusztig proved the following theorem.

\begin{theorem}[\cite{Lusztig_Braid_group_action_and_canonical_bases}]\label{theorem:3.11}
Lusztig's symmetry $T_i:{_i\mathbf{f}}\rightarrow{^i\mathbf{f}}$ maps any element of ${_i\mathbf{B}}$ to an element of ${^i\mathbf{B}}$.
Thus, there exists a unique bijection
$\kappa_i:\mathbf{B}-\mathbf{B}\cap\theta_i\mathbf{f}\rightarrow\mathbf{B}-\mathbf{B}\cap\mathbf{f}\theta_i$
such that $T_i(_i\pi(b))={^i\pi}(\kappa_i(b))$.
\end{theorem}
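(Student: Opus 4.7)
The plan is to deduce Theorem \ref{theorem:3.11} by tracking a simple perverse sheaf $\mathcal{L}\in\mathcal{P}_{\mathbf{V}}$, corresponding to a canonical basis element $\mathbf{b}_{\mathcal{L}}$, through the commutative diagram of Theorem \ref{theorem:3.10}. The key ingredients are already in place: $j_{\mathbf{V}}^{\ast}$ geometrizes ${_i\pi}$ via diagram (\ref{equation:3.2.1}), $\tilde{\omega}_i$ geometrizes $T_i$ by Theorem \ref{theorem:3.10}, and $j_{\mathbf{V}'}^{\ast}$ geometrizes ${^i\pi}$. It remains to show that each of these operations sends simple perverse sheaves to simple perverse sheaves (or to zero), so that canonical basis elements go to canonical basis elements.

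First I would observe that ${_iE}_{\mathbf{V}}$ is the open locus of $E_{\mathbf{V}}$ where a certain linear map is surjective, so $j_{\mathbf{V}}$ is an open embedding and $j_{\mathbf{V}}^{\ast}$ is t-exact for the perverse t-structure. Consequently, for each $\mathcal{L}\in\mathcal{P}_{\mathbf{V}}$, either $j_{\mathbf{V}}^{\ast}(\mathcal{L})=0$ or $j_{\mathbf{V}}^{\ast}(\mathcal{L})$ is a simple perverse sheaf, and by (\ref{equation:3.2.2}) together with the definition of $_i\mathcal{P}_{\mathbf{V}}$ it lies in $_i\mathcal{P}_{\mathbf{V}}$ in the latter case. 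Commutativity of (\ref{equation:3.2.1}) gives $j_{\mathbf{V}}^{\ast}([\mathcal{L}])=0$ iff ${_i\pi}(\mathbf{b}_{\mathcal{L}})=0$ iff $\mathbf{b}_{\mathcal{L}}\in\mathbf{B}\cap\theta_i\mathbf{f}$. Thus $\mathcal{L}\mapsto j_{\mathbf{V}}^{\ast}(\mathcal{L})$ induces a bijection between $\mathbf{B}_{\nu}-\mathbf{B}_{\nu}\cap\theta_i\mathbf{f}$ and isomorphism classes in $_i\mathcal{P}_{\mathbf{V}}$. A symmetric argument applied to the source case yields a bijection between $\mathbf{B}_{\nu'}-\mathbf{B}_{\nu'}\cap\mathbf{f}\theta_i$ and isomorphism classes in ${^i\mathcal{P}}_{\mathbf{V}'}$.

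Next I would apply Proposition \ref{proposition:3.9}. Since $\tilde{\omega}_i$ is an equivalence of categories between $_i\mathcal{Q}_{\mathbf{V}}$ and ${^i\mathcal{Q}}_{\mathbf{V}'}$ arising from $(\beta^{\ast})^{-1}\circ\alpha^{\ast}$ corrected by the normalizing shift $[-s(\mathbf{V})]$ and Tate twist $(-\tfrac{s(\mathbf{V})}{2})$, it sends simple perverse sheaves in $_i\mathcal{P}_{\mathbf{V}}$ to simple perverse sheaves in ${^i\mathcal{P}}_{\mathbf{V}'}$. Writing $_i\mathcal{L}=j_{\mathbf{V}}^{\ast}(\mathcal{L})$ and ${^i\mathcal{L}}=\tilde{\omega}_i(_i\mathcal{L})$, the previous paragraph produces a unique $\mathcal{L}'\in\mathcal{P}_{\mathbf{V}'}$ with $\mathbf{b}_{\mathcal{L}'}\notin\mathbf{f}\theta_i$ and $j_{\mathbf{V}'}^{\ast}(\mathcal{L}')={^i\mathcal{L}}$. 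Setting $\kappa_i(\mathbf{b}_{\mathcal{L}})=\mathbf{b}_{\mathcal{L}'}$, Theorem \ref{theorem:3.10} gives
\begin{displaymath}
T_i({_i\pi}(\mathbf{b}_{\mathcal{L}}))={^i\lambda'_{\mathcal{A}}}^{-1}\tilde{\omega}_i([_i\mathcal{L}])={^i\lambda'_{\mathcal{A}}}^{-1}([{^i\mathcal{L}}])={^i\pi}(\mathbf{b}_{\mathcal{L}'}),
\end{displaymath}
which gives both $T_i({_i\mathbf{B}})\subseteq{^i\mathbf{B}}$ and the defining relation for $\kappa_i$. Bijectivity of $\kappa_i$ is then immediate from the equivalence $\tilde{\omega}_i$.

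The hard part will be verifying that $\tilde{\omega}_i$ carries simple perverse sheaves to simple perverse sheaves with \emph{no extra homological shift}, so that the image is genuinely a canonical basis element rather than a $v$-rescaled one. This requires checking that the correction $[-s(\mathbf{V})](-\tfrac{s(\mathbf{V})}{2})$ exactly balances the fiber-dimension shifts $\dim\textrm{GL}(V'_i)$ and $\dim\textrm{GL}(V_i)$ introduced by pulling back along the principal bundles $\alpha$ and $\beta$ in (\ref{equation:3.4.2}). Once this numerical compatibility is in hand, the geometric bookkeeping above assembles into Theorem \ref{theorem:3.11}.
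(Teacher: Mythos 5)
Your proposal takes essentially the same approach as the paper's geometric interpretation: identify ${_i\mathbf{B}}$ (resp.\ ${^i\mathbf{B}}$) with $\sqcup_{\nu}\{[\mathcal{L}]\,|\,\mathcal{L}\in{_i\mathcal{P}}_{\mathbf{V}}\}$ (resp.\ $\sqcup_{\nu'}\{[\mathcal{L}]\,|\,\mathcal{L}\in{^i\mathcal{P}}_{\mathbf{V}'}\}$) via Propositions \ref{theorem:3.5}, \ref{theorem:3.8} and diagram (\ref{equation:3.2.1}), note that $\tilde{\omega}_i$ is an equivalence hence $\tilde{\omega}_i({_i\mathcal{P}}_{\mathbf{V}})={^i\mathcal{P}}_{\mathbf{V}'}$, and finish with Theorem \ref{theorem:3.10}. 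The ``hard part'' you flag concerning the shift and Tate twist in the definition of $\tilde{\omega}_i$ is exactly what the normalization $[-s(\mathbf{V})](-\tfrac{s(\mathbf{V})}{2})$ is designed to absorb, and it is already covered by Proposition \ref{proposition:3.9} together with the fact that $\alpha^{\ast},\beta^{\ast}$ are smooth pullbacks of complementary relative dimensions, so there is no remaining gap.
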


Let $i$ be a sink of a quiver $Q$. So $i$ is a source of $Q'=\sigma_iQ$.
By Theorem \ref{theorem:3.1}, Proposition \ref{theorem:3.8}, the formula (\ref{equation:3.2.2}) and the commutative diagram (\ref{equation:3.2.1}), we have \begin{equation}\label{equation:3.5.1}
{_i\mathbf{B}}=\sqcup_{\nu\in\mathbb{N}I}\{\mathbf{b}_{\mathcal{L}}=
{_i\lambda_{\mathcal{A}}}([\mathcal{L}])\,\,|\,\,\mathcal{L}\in{_i\mathcal{P}_{\mathbf{V}}},\,\,\underline{\dim}\mathbf{V}=\nu\}.
\end{equation}
Similarly, we have
\begin{equation}\label{equation:3.5.2}
{^i\mathbf{B}}=\sqcup_{ \nu'\in\mathbb{N}I}\{\mathbf{b}_{\mathcal{L}}={^i\lambda_{\mathcal{A}}}([\mathcal{L}])\,\,|\,\,\mathcal{L}\in{^i\mathcal{P}_{\mathbf{V}'}},\,\,\underline{\dim} \mathbf{V}'=\nu'\}.
\end{equation}

Fix any $\nu,\nu'\in\mathbb{N}I$ such that $\nu'=s_i\nu$ and $I$-graded $\mathbb{K}$-vector spaces $\mathbf{V}$, $\mathbf{V}'$ such that $\underline{\dim}\mathbf{V}=\nu$, $\underline{\dim}\mathbf{V}'=\nu'$.

In (\ref{equation:3.4.1}), the functors $\alpha^{\ast}$ and $\beta^{\ast}$ are equivalences of categories. Hence the functor
$$
\tilde{\omega}_i:{_i\mathcal{Q}}_\mathbf{V}\rightarrow{^i\mathcal{Q}}_{\mathbf{V}'}
$$
maps any simple perverse sheaf in ${_i\mathcal{Q}}_\mathbf{V}$ to a simple perverse sheaf in ${^i\mathcal{Q}}_{\mathbf{V}'}$. That is, $\tilde{\omega}_i({_i\mathcal{P}}_{\mathbf{V}})={^i\mathcal{P}}_{\mathbf{V}'}$. So the map
$$
\tilde{\omega}_i:K({_i\mathcal{Q}})\rightarrow K({^i\mathcal{Q}})
$$
satisfies $$\tilde{\omega}_i(\{[\mathcal{L}]\,\,|\,\,\mathcal{L}\in{_i\mathcal{P}}_{\mathbf{V}}\})=\{[\mathcal{L}]\,\,|\,\,\mathcal{L}\in{^i\mathcal{P}}_{\mathbf{V}'}\}.$$
By Theorem \ref{theorem:3.10}, (\ref{equation:3.5.1}) and (\ref{equation:3.5.2}),
it holds that $T_i(_i\mathbf{B})={^i\mathbf{B}}$ and we get a geometric interpretation of Theorem \ref{theorem:3.11}.

\section{Hall algebra approaches}

\subsection{Hall algebra approach to $\mathbf{f}$}\label{subsection:4.1}

In this subsection, we shall review the Hall algebra approach to $\mathbf{f}$ (\cite{Ringel_Hall_algebras_and_quantum_groups,Lusztig_Canonical_bases_and_Hall_algebras,Lin_Lusztig's_geometric,Lin_Xiao_Zhang_Representations_of_tame_quivers_and_affine_canonical_bases}).

Let $Q=( I,H,s,t)$ be a quiver. In Section \ref{subsection:3.1}, $E_\mathbf{V}$ and $G_{\mathbf{V}}$ are defined for any
$I$-graded $\mathbb{K}$-vector space $\mathbf{V}$.
Let $F^n$ be the Frobenius morphism.
The sets $E^{F^n}_\mathbf{V}$ and $G^{F^n}_{\mathbf{V}}$ consist of the $F^n$-fixed points in $E_\mathbf{V}$ and $G_{\mathbf{V}}$ respectively.

Lusztig defined $\underline{\mathcal{F}}^n_{\mathbf{V}}$ as the set of all $G^{F^n}_{\mathbf{V}}$-invariant $\bar{\mathbb{Q}}_l$-functions on $E^{F^n}_\mathbf{V}$
and we can give a multiplication on $\underline{\mathcal{F}}^n=\bigoplus_{\nu\in\mathbb{N}I}\underline{\mathcal{F}}^n_{\mathbf{V}}$
to obtain the Hall algebra. For any $i\in I$,
let $\mathbf{V}_i$ be the $I$-graded $\mathbb{K}$-vector space with dimension vector $i$ and $f_i$ be the constant function on $E^{F^n}_{\mathbf{V}_i}$ with value $1$. Denote by $\mathcal{F}^n$ the composition subalgebra of
$\underline{\mathcal{F}}^n$ generated by $f_i$
and $\mathcal{F}^n_{\mathbf{V}}=\underline{\mathcal{F}}^n_{\mathbf{V}}\cap\mathcal{F}^n$.
Let
$\mathcal{F}=\bigoplus_{\nu\in\mathbb{N}I}\mathcal{F}_{\mathbf{V}}$
be the generic form of $\mathcal{F}^n$ and $\mathcal{F}_{\mathcal{A}}$ be the integral form of $\mathcal{F}$ (\cite{Lusztig_Canonical_bases_and_Hall_algebras}).

\begin{theorem}[\cite{Ringel_Hall_algebras_and_quantum_groups,Lusztig_Canonical_bases_and_Hall_algebras}]\label{theorem:4.2}
There exists an isomorphism of $\mathcal{A}$-algebras
$$\varpi_{\mathcal{A}}:\mathbf{f}_\mathcal{A}\rightarrow \mathcal{F}_\mathcal{A}$$
such that $\varpi_{\mathcal{A}}(\theta_i)=f_i$.
\end{theorem}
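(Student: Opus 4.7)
The plan is to construct $\varpi_{\mathcal{A}}$ by sending $\theta_i$ to $f_i$ and extending multiplicatively, then verify well-definedness, surjectivity, and injectivity separately. For well-definedness I will check that the elements $f_i$ satisfy the quantum Serre relations in $\mathcal{F}$. Working in $\mathcal{F}^n$ over $\mathbb{F}_{q^n}$, one enumerates the isomorphism classes of representations of the full subquiver on $\{i,j\}$ with the appropriate dimension vector and carries out Ringel's explicit Hall-number calculation to expand $f_i^{(k)}\ast f_j\ast f_i^{(1-a_{ij}-k)}$; summing with signs $(-1)^k$ yields
\[
\sum_{k=0}^{1-a_{ij}} (-1)^k f_i^{(k)}\ast f_j\ast f_i^{(1-a_{ij}-k)} = 0
\]
in every $\mathcal{F}^n$, and hence in the generic form $\mathcal{F}_{\mathcal{A}}$. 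By the presentation of $\mathbf{f}$ recalled in Section \ref{subsection:2.1} this produces the desired $\mathcal{A}$-algebra homomorphism $\varpi_{\mathcal{A}}$, which automatically sends the divided powers $\theta_i^{(n)}$ to $f_i^{(n)}$ once one has fixed the $q$-integer normalization $v=\sqrt{q^n}$.

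Surjectivity is immediate, since by definition $\mathcal{F}$ is the composition subalgebra of $\underline{\mathcal{F}}$ generated by the $f_i$.

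For injectivity, the natural tool given the setup of this paper is Lusztig's sheaf--function correspondence of \emph{Canonical bases and Hall algebras}. To each $\mathcal{L}\in\mathcal{P}_{\mathbf{V}}$ one attaches its trace-of-Frobenius function $\chi_{\mathcal{L}}\in\mathcal{F}^n_{\mathbf{V}}$, and under this correspondence the Lusztig sheaves $\mathcal{L}_{\mathbf{y}}$ attached to flag varieties go to precisely the iterated Hall products $f_{i_1}^{(a_1)}\ast\cdots\ast f_{i_k}^{(a_k)}$ that generate $\mathcal{F}_{\mathcal{A}}$. Passing to generic forms I obtain an $\mathcal{A}$-linear map $K(\mathcal{Q})\to\mathcal{F}_{\mathcal{A}}$ which agrees with $\varpi_{\mathcal{A}}\circ\lambda_{\mathcal{A}}$ on the classes $[\mathcal{L}_{\mathbf{y}}]$, and hence on all of $K(\mathcal{Q})$ by Theorem \ref{theorem:3.1}. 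Since the classes $\{[\mathcal{L}]\mid\mathcal{L}\in\mathcal{P}_{\mathbf{V}}\}$ are a basis of $K(\mathcal{Q}_{\mathbf{V}})$ that map to a linearly independent set of characteristic functions of the orbits supporting the corresponding perverse sheaves in $\mathcal{F}_{\mathbf{V}}$, the composition is injective, and therefore so is $\varpi_{\mathcal{A}}$.

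The hard part is the injectivity step: one must track Tate twists, shifts $[d_{\mathbf{y}}]$, and powers of $v$ along the sheaf--function correspondence carefully enough that divided powers $\theta_i^{(n)}$ really do pair with the functions carrying the correct $[n]_v!$ factor. This is exactly why the theorem is formulated over the integral form $\mathcal{A}=\mathbb{Z}[v,v^{-1}]$ rather than $\mathbb{Q}(v)$; an alternative algebraic route via Green's non-degenerate bilinear pairing on the Hall algebra and Lusztig's pairing on $\mathbf{f}$ would have to reproduce the same bookkeeping, so I would take the geometric route as the cleanest and the one most consistent with the rest of the paper.
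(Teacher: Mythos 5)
This theorem is quoted directly as a result of Ringel and Lusztig; the paper offers no proof of it, so there is no in-text argument against which to measure your sketch. As an outline of the standard route your plan is sound: Serre relations via Hall-number computations (Ringel), surjectivity by definition of the composition subalgebra, and injectivity via the sheaf--function dictionary. Two caveats on the injectivity step. First, $\chi_{\mathcal{L}}$ is not the characteristic function of the orbit supporting $\mathcal{L}$: for a simple perverse sheaf $\mathcal{L}=\mathrm{IC}(\overline{O})$ the function $\chi^n_{\mathcal{L}}$ is an alternating sum of Frobenius traces on stalk cohomology, which is generally nonzero on lower orbits inside $\overline{O}$. What saves your argument is that the transition between the functions $\chi^n_{\mathcal{L}}$ and genuine orbit characteristic functions is block-unitriangular with respect to the support order, a point the wording elides. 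Second, for any single fixed $n$ the functions $\chi^n_{\mathcal{L}}$ need not be linearly independent (the space $E^{F^n}_{\mathbf{V}}$ is finite while the number of simple objects may be infinite outside finite type); linear independence is asserted for the sequences $(\chi^n_{\mathcal{L}})_{n\geq 1}$ over all $n$, which is precisely how the generic form $\mathcal{F}$ is assembled and is the content of the Kiehl--Weissauer result that the paper later cites in the proof of Proposition \ref{proposition:3.9}. With these repairs your route matches the references cited for the theorem.
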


For any $\mathcal{L}\in\mathcal{D}_{G_{\mathbf{V}}}(E_{\mathbf{V}})$, there is a function $\chi^n_{\mathcal{L}}:E^{F^n}_\mathbf{V}\rightarrow\bar{\mathbb{Q}}_l$
(Section I.2.12 in \cite{Kiehl_Weissauer_Weil_conjectures_perverse_sheaves_and_l'adic_Fourier_transform}).
Hence, we have the following map
\begin{eqnarray*}
\chi^n:\mathcal{D}_{G_{\mathbf{V}}}(E_{\mathbf{V}})&\rightarrow&\underline{\mathcal{F}}^{n}_{\mathbf{V}}\\
\mathcal{L}&\mapsto&\chi^n_{\mathcal{L}}
\end{eqnarray*}
The restriction of this map on the subcategory $\mathcal{Q}_{\mathbf{V}}$ is also denoted by
$$\chi^n:\mathcal{Q}_{\mathbf{V}}\rightarrow\underline{\mathcal{F}}^{n}_{\mathbf{V}}.$$
Lusztig proved that $\chi^n(\mathcal{Q}_{\mathbf{V}})\subset\mathcal{F}^{n}_{\mathbf{V}}$ in \cite{Lusztig_Canonical_bases_and_Hall_algebras}.
Hence, we can define
$\chi^n:\mathcal{Q}_{\mathbf{V}}\rightarrow\mathcal{F}^{n}_{\mathbf{V}}$,
which induces
$\chi:\mathcal{Q}_{\mathbf{V}}\rightarrow\mathcal{F}_{\mathbf{V}}$ naturally.
Hence, we get a map
$\chi_{\mathcal{A}}:K(\mathcal{Q})\rightarrow\mathcal{F}_{\mathcal{A}}$.

Lusztig proved the following proposition.

\begin{proposition}[\cite{Lusztig_Canonical_bases_and_Hall_algebras}]\label{theorem:4.3}
$\chi_{\mathcal{A}}:K(\mathcal{Q})\rightarrow\mathcal{F}_{\mathcal{A}}$
is an isomorphism of $\mathcal{A}$-algebras such that $\chi_{\mathcal{A}}([{\mathcal{L}_i}])=f_i$ and the
following diagram is commutative
$$\xymatrix{K(\mathcal{Q})\ar[r]^-{\lambda_{\mathcal{A}}}\ar[d]^-{\chi_{\mathcal{A}}}&\mathbf{f}_{\mathcal{A}}\ar[ld]^-{\varpi_{\mathcal{A}}}\\
\mathcal{F}_{\mathcal{A}}}$$
\end{proposition}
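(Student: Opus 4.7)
The plan is to verify in turn three assertions from which everything else follows: that $\chi_{\mathcal{A}}$ is a well-defined $\mathcal{A}$-linear map, that it is multiplicative and sends $[\mathcal{L}_i]$ to $f_i$, and that the triangle in the statement commutes. The isomorphism claim is then automatic, since by Theorems \ref{theorem:3.1} and \ref{theorem:4.2} both $\lambda_{\mathcal{A}}$ and $\varpi_{\mathcal{A}}$ are isomorphisms and we will have $\chi_{\mathcal{A}} = \varpi_{\mathcal{A}}\circ\lambda_{\mathcal{A}}$.

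First I would show that $\chi$ descends to an $\mathcal{A}$-linear map $\chi_{\mathcal{A}}:K(\mathcal{Q})\to\mathcal{F}_{\mathcal{A}}$. The Grothendieck--Lefschetz trace/sheaf-function dictionary (Section I.2.12 of \cite{Kiehl_Weissauer_Weil_conjectures_perverse_sheaves_and_l'adic_Fourier_transform}) yields $\chi^n(\mathcal{L}[1]) = -\chi^n(\mathcal{L})$ and $\chi^n(\mathcal{L}(\tfrac{1}{2})) = q^{-n/2}\chi^n(\mathcal{L})$. Combining these identifies $v^{\pm}[\mathcal{L}] = [\mathcal{L}[\pm 1](\pm\tfrac{1}{2})]$ on the geometric side with the action of the generic parameter $v$ on the function side, so $\chi_{\mathcal{A}}$ is well defined and $\mathcal{A}$-linear. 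The inclusion $\chi^n(\mathcal{Q}_{\mathbf{V}})\subset\mathcal{F}^n_{\mathbf{V}}$, recalled above the statement, ensures the image lies in $\mathcal{F}_{\mathcal{A}}$.

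Next, multiplicativity. The convolution $\ast:\mathcal{Q}_{\mathbf{V}'}\times\mathcal{Q}_{\mathbf{V}''}\to\mathcal{Q}_{\mathbf{V}}$ is defined via Lusztig's induction diagram $E_{\mathbf{V}'}\times E_{\mathbf{V}''}\xleftarrow{p_1}E'\xleftarrow{p_2}E''\xrightarrow{p_3}E_{\mathbf{V}}$, in which $p_2$ is a principal bundle and $p_3$ is proper. Applying $\chi^n$ and using proper base change together with the identity $\chi^n(f_!\mathcal{K})(y)=\sum_{f(x)=y}\chi^n(\mathcal{K})(x)$, one obtains $\chi^n(\mathcal{L}'\ast\mathcal{L}'')$ as a sum over filtrations, matching the Hall product of $\chi^n(\mathcal{L}')$ and $\chi^n(\mathcal{L}'')$ up to the power of $q$ coming from the dimension of the bundle $p_2$. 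The extra shift and twist $[m_{\nu'\nu''}](\tfrac{m_{\nu'\nu''}}{2})$ built into $\circledast$ precisely absorb this factor so that, after passing from $\chi^n$ to the generic $\chi$, one finds $\chi_{\mathcal{A}}([\mathcal{L}']\circledast[\mathcal{L}''])=\chi_{\mathcal{A}}([\mathcal{L}'])\cdot\chi_{\mathcal{A}}([\mathcal{L}''])$. The computation $\chi_{\mathcal{A}}([\mathcal{L}_i])=f_i$ is immediate: $E_{\mathbf{V}_i}$ is a point, $\mathcal{L}_i=\mathbf{1}$, and its characteristic function is the constant $1=f_i$.

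Finally, both $\chi_{\mathcal{A}}$ and $\varpi_{\mathcal{A}}\circ\lambda_{\mathcal{A}}$ are $\mathcal{A}$-algebra maps $K(\mathcal{Q})\to\mathcal{F}_{\mathcal{A}}$ that agree on the classes $[\mathcal{L}_i]$. By Theorem \ref{theorem:3.1}, $\lambda_{\mathcal{A}}$ sends $[\mathcal{L}_i]$ to $\theta_i$, and the $\theta_i$ generate $\mathbf{f}_{\mathcal{A}}$ as an $\mathcal{A}$-algebra, so the $[\mathcal{L}_i]$ generate $K(\mathcal{Q})$; hence the two homomorphisms coincide and the triangle commutes. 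The main obstacle is the multiplicativity step: the careful bookkeeping of cohomological shifts, Tate twists and the correction $m_{\nu'\nu''}$ on the geometric side versus the Euler-form twist implicit in the Hall multiplication on the function side. Once that dictionary is set up correctly, everything reduces to proper base change and the trace formula.
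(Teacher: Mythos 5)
The paper does not prove Proposition~\ref{theorem:4.3}; it is cited directly from Lusztig's \emph{Canonical bases and Hall algebras}, and the present paper uses it as a black box. So there is no ``paper proof'' to compare your attempt against. That said, your sketch does follow the strategy of Lusztig's original argument: pass through the sheaf--function correspondence, check algebra-compatibility via the induction diagram and proper base change, then deduce commutativity of the triangle by matching on the generators $[\mathcal{L}_i]$.

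Two places in your outline deserve flagging. First, the well-definedness step is more delicate than you present it: one must show that the family $\{\chi^n_{\mathcal{L}}\}_{n\geq 1}$ is polynomial in $q^{\pm n/2}$ before one can even speak of a ``generic'' $\chi_{\mathcal{A}}$, and the identification of the formal $v$ (acting as $[\pm1](\pm\tfrac{1}{2})$) with the parameter on the Hall side involves a sign ($\chi^n$ of $[1]$ introduces a $-1$), which your sketch does not track. Second, the multiplicativity step, which you correctly identify as the crux, is only described, not carried out: the cancellation of the principal-bundle dimension with the correction $m_{\nu'\nu''}$ is precisely where the content of the proposition lies, and a full proof has to verify this dimension count. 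As a roadmap your proposal is sound; as written it delegates the substantive computation to narrative.
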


\subsection{Hall algebra approaches to ${_i\mathbf{f}}$ and ${^i\mathbf{f}}$}\label{subsection:4.2}

Let $i$ be a sink of $Q$. In Section \ref{subsection:3.2}, ${_iE}_\mathbf{V}$ and $j_{\mathbf{V}}:{_iE}_{\mathbf{V}}\rightarrow E_{\mathbf{V}}$ are defined for any $I$-graded $\mathbb{K}$-vector space $\mathbf{V}$.
Similarly, ${_iE}^{F^n}_{\mathbf{V}}$ is defined as the $F_n$-fixed points set in ${_iE}_\mathbf{V}$ and we have
$j_{\mathbf{V}}:{_iE}^{F^n}_{\mathbf{V}}\rightarrow E^{F^n}_{\mathbf{V}}$.

Lusztig also defined ${_i\underline{\mathcal{F}}}^n_{\mathbf{V}}$ as the set of all $G^{F^n}_{\mathbf{V}}$-invariant $\bar{\mathbb{Q}}_l$-functions on ${_iE}^{F^n}_{\mathbf{V}}$.
Similarly to the case in Section \ref{subsection:4.1}, the Hall algebra is denoted by ${_i\underline{\mathcal{F}}}^n=\bigoplus_{\nu\in\mathbb{N}I}{_i\underline{\mathcal{F}}}^n_{\mathbf{V}}$,
the composition subalgebra is denoted by
${_i\mathcal{F}}^n=\bigoplus_{\nu\in\mathbb{N}I}{_i\mathcal{F}}^n_{\mathbf{V}}$
and the generic form is denoted by
${_i\mathcal{F}}:=\bigoplus_{\nu\in\mathbb{N}I}{_i\mathcal{F}}_{\mathbf{V}}$.

Naturally, we have two maps $j^{\ast}_{\mathbf{V}}:\mathcal{F}_\mathbf{V}\rightarrow{_i\mathcal{F}}_\mathbf{V}$ and ${j_{\mathbf{V}}}_{!}:{_i\mathcal{F}}_{\mathbf{V}}\rightarrow \mathcal{F}_{\mathbf{V}}$.
Considering all dimension vectors, we have $j_{!}:{_i\mathcal{F}}\rightarrow\mathcal{F}$ and $j^{\ast}:\mathcal{F}\rightarrow{_i\mathcal{F}}$.

\begin{proposition}[\cite{Lusztig_Canonical_bases_and_Hall_algebras}]\label{theorem:4.5}
We have the following commutative diagram
$$\xymatrix{{_i\mathbf{f}}\ar[r]\ar[d]_{\cong}^-{_i\varpi}&\mathbf{f}\ar[d]_{\cong}^-{\varpi}\ar[r]^-{_i\pi}&{_i\mathbf{f}}\ar[d]_{\cong}^-{_i\varpi}\\
  {_i\mathcal{F}}\ar[r]^-{j_{!}}&\mathcal{F}\ar[r]^-{j^{\ast}}&{_i\mathcal{F}}}$$
where $_i\varpi$ is the isomorphism induced by $\varpi$.
\end{proposition}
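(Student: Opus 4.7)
The plan is to establish each square of the diagram separately, then to verify that $_i\varpi$ is bijective. The central geometric input is that $j = j_\mathbf{V}\colon {_iE}_\mathbf{V} \hookrightarrow E_\mathbf{V}$ is an open embedding, since surjectivity of $\bigoplus_{t(h)=i} x_h$ is an open condition. Hence on $\bar{\mathbb{Q}}_l$-valued functions $j^*$ is honest restriction, $j_!$ is extension by zero, and $j^* \circ j_! = \mathrm{id}$. Consequently $\mathcal{F}_\mathbf{V}$ decomposes as $\ker(j^*|_{\mathcal{F}_\mathbf{V}}) \oplus j_!({_i\mathcal{F}}_\mathbf{V})$, which is just the splitting of a function into its parts supported on $E_\mathbf{V} \setminus {_iE}_\mathbf{V}$ and on ${_iE}_\mathbf{V}$.

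\emph{Right square.} I first show that $j^* \circ \varpi$ vanishes on $\theta_i \mathbf{f}_\mathcal{A}$, which forces it to factor through ${_i\pi}$ and defines ${_i\varpi}$ so that the right square commutes. For any $y \in \mathbf{f}_\mathcal{A}$, the function $\varpi(\theta_i y) = f_i \ast \varpi(y)$, evaluated at $x \in {_iE}^{F^n}_\mathbf{V}$, is a weighted count of $x$-stable submodules $W \subsetneq \mathbf{V}$ with $\mathbf{V}/W \cong S_i$. Such a $W$ would require $W_i \subsetneq V_i$ to contain the image of $\bigoplus_{t(h)=i} x_h$; but that image is all of $V_i$ by the defining condition of ${_iE}_\mathbf{V}$, so no such $W$ exists and $(f_i \ast \varpi(y))(x) = 0$.

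\emph{Left square.} I need $\varpi({_i\mathbf{f}}_\mathcal{A}) \subseteq j_!({_i\mathcal{F}}_\mathcal{A})$, so that $\varpi = j_! \circ j^* \circ \varpi = j_! \circ {_i\varpi}$ on ${_i\mathbf{f}}_\mathcal{A}$. This splits into two sub-steps. First, the subspace of $\mathcal{F}$ consisting of functions supported on ${_iE}$ is closed under the Hall convolution: a nonzero contribution to $(F \ast G)(x)$ comes from an $x$-stable filtration $0 \subset W \subset \mathbf{V}$ with $x|_W$ and the induced map on $\mathbf{V}/W$ both surjective at $i$, and the resulting snake-lemma diagram at vertex $i$ forces $\bigoplus_{t(h)=i} x_h$ itself to be surjective. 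Second, the algebra generators $f(i,j;m)$ of ${_i\mathbf{f}}_\mathcal{A}$ have $\varpi$-images supported on ${_iE}$; one expands the alternating sum using the Hall product formula and checks that the contributions from non-surjective $x$ cancel.

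\emph{Isomorphism.} Surjectivity is immediate once $_i\mathcal{F}_\mathcal{A}$ is identified with $j^*(\mathcal{F}_\mathcal{A})$: both $\varpi$ and $j^*\colon \mathcal{F}_\mathcal{A} \twoheadrightarrow {_i\mathcal{F}}_\mathcal{A}$ are surjective, hence so is ${_i\varpi} \circ {_i\pi} = j^* \circ \varpi$, and therefore ${_i\varpi}$. For injectivity, if ${_i\varpi}(x) = 0$ for $x \in {_i\mathbf{f}}_\mathcal{A}$, then $\varpi(x)$ is supported on ${_iE}$ by the left-square argument but also vanishes there, so $\varpi(x) = 0$ and $x = 0$. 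The main obstacle is the second sub-step of the left-square argument: verifying directly that $\varpi(f(i,j;m))$ is supported on ${_iE}$ is the delicate combinatorial Hall-algebra calculation in which the specific shape of the Serre-type elements $f(i,j;m)$ is essential, and it is precisely this computation that encodes the compatibility of Lusztig's subalgebras ${_i\mathbf{f}}$ with the surjectivity condition defining ${_iE}_\mathbf{V}$.
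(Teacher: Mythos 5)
The paper cites Proposition~\ref{theorem:4.5} from Lusztig's Hall algebra paper without giving a proof, so there is no internal argument to compare against. Your outline follows the natural route and the geometric input is right: ${}_iE_{\mathbf V}\hookrightarrow E_{\mathbf V}$ is open, so that $j^*$ and $j_!$ on functions are restriction and extension by zero; for the right square, the value of $f_i\ast\varpi(y)$ at $x\in{}_iE^{F^n}_{\mathbf V}$ counts $x$-stable submodules $W$ with $M_x/W\cong S_i$, and no such $W$ exists once $\bigoplus_{t(h)=i}x_h$ is onto. Hence $j^*\circ\varpi$ kills $\theta_i\mathbf f$, which defines ${}_i\varpi$ and gives the right square. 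The multiplicative-closure argument for $\{f\in\mathcal F:\mathrm{supp}\,f\subset{}_iE\}$ via the five-lemma at the vertex $i$ is also correct.

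The step you flag as delicate, however, is precisely where the theorem's content lies, and it is left undone. The individual functions $\varpi(\theta_i^{(r)}\theta_j\theta_i^{(s)})$ are not supported on ${}_iE$; showing that the alternating sum defining $\varpi(f(i,j;m))$ vanishes off ${}_iE$ is the actual substance of realizing the decomposition $\mathbf f={}_i\mathbf f\oplus\theta_i\mathbf f$ at the function level, and your closure lemma gives no leverage on the generators themselves. A second point is passed over silently: the left square of the diagram asserts $\varpi({}_i\mathbf f)\subset j_!({}_i\mathcal F)$ with ${}_i\mathcal F$ the \emph{composition} subalgebra of functions on ${}_iE$, not merely the set of restrictions of $\mathcal F$-functions; that an $\mathcal F$-function supported on ${}_iE$ restricts into ${}_i\mathcal F$ is a separate nontrivial input which Lusztig establishes and your proposal uses without acknowledgement. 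Within this paper the support claim for $f(i,j;m)$ is essentially Corollary~\ref{corollary:5.2} via $\chi(\mathcal E^{(m)})=f(i,j;m)$ with $\mathcal E^{(m)}=j_{\mathbf V^{(m)}!}(v^{-mN}\mathbf 1_{{}_iE_{\mathbf V^{(m)}}})$, but that corollary is proved in Section~\ref{subsection:5.3} by methods that rest on the present proposition (through Proposition~\ref{theorem:3.5}), so it cannot be fed back in to close the gap without circularity.
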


Next, we shall prove Proposition \ref{theorem:3.5}.

For any $\mathcal{L}\in\mathcal{D}_{G_{\mathbf{V}}}({_iE}_{\mathbf{V}})$, there is also a function $\chi^n_{\mathcal{L}}:{_iE}^{F^n}_\mathbf{V}\rightarrow\bar{\mathbb{Q}}_l$.
Hence, we have the following map
\begin{eqnarray*}
{_i\chi}^n:\mathcal{D}_{G_{\mathbf{V}}}({_iE}_{\mathbf{V}})&\rightarrow&{_i\underline{\mathcal{F}}}^{n}_{\mathbf{V}}\\
\mathcal{L}&\mapsto&\chi^n_{\mathcal{L}}
\end{eqnarray*}
The restriction of this map on the subcategory ${_i\mathcal{Q}}_{\mathbf{V}}$ is also denoted by
$${_i\chi}^n:{_i\mathcal{Q}}_{\mathbf{V}}\rightarrow{_i\underline{\mathcal{F}}}^{n}_{\mathbf{V}}.$$

\begin{proposition}\label{prop:5.5}
It holds that ${_i\chi}^n({_i\mathcal{Q}}_{\mathbf{V}})\subset{_i\mathcal{F}}^{n}_{\mathbf{V}}$.
\end{proposition}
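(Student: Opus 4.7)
The plan is to reduce the statement to Lusztig's already-established result that $\chi^n(\mathcal{Q}_{\mathbf{V}}) \subset \mathcal{F}^n_{\mathbf{V}}$ together with the compatibility of the sheaf-function correspondence with pullback along the open embedding $j_{\mathbf{V}} : {_iE}_{\mathbf{V}} \to E_{\mathbf{V}}$.

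First I would observe that, by the key identity (\ref{equation:3.2.2}) and the definition of ${_i\mathcal{Q}}_{\mathbf{V}}$, every object of ${_i\mathcal{Q}}_{\mathbf{V}}$ is isomorphic to a finite direct sum of shifts of complexes of the form $j^{\ast}_{\mathbf{V}}\mathcal{L}$ with $\mathcal{L}\in\mathcal{Q}_{\mathbf{V}}$. Since ${_i\chi}^n$ is additive and commutes with shifts, it suffices to prove that ${_i\chi}^n(j^{\ast}_{\mathbf{V}}\mathcal{L}) \in {_i\mathcal{F}}^n_{\mathbf{V}}$ for every $\mathcal{L}\in\mathcal{Q}_{\mathbf{V}}$.

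Next I would invoke the standard compatibility of the trace-of-Frobenius function with the pullback functor: for any $\mathcal{L}\in\mathcal{D}_{G_{\mathbf{V}}}(E_{\mathbf{V}})$ one has
\begin{displaymath}
\chi^n_{j^{\ast}_{\mathbf{V}}\mathcal{L}} \;=\; j^{\ast}_{\mathbf{V}}(\chi^n_{\mathcal{L}}),
\end{displaymath}
where on the right $j^{\ast}_{\mathbf{V}}$ denotes the restriction of functions from $E^{F^n}_{\mathbf{V}}$ to ${_iE}^{F^n}_{\mathbf{V}}$ (this is the standard sheaf-to-function dictionary, as used in Section I.2.12 of \cite{Kiehl_Weissauer_Weil_conjectures_perverse_sheaves_and_l'adic_Fourier_transform}). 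Thus
\begin{displaymath}
{_i\chi}^n(j^{\ast}_{\mathbf{V}}\mathcal{L}) \;=\; j^{\ast}_{\mathbf{V}}\bigl(\chi^n(\mathcal{L})\bigr).
\end{displaymath}

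Finally, by Lusztig's result recalled before Proposition \ref{theorem:4.3} we have $\chi^n(\mathcal{L}) \in \mathcal{F}^n_{\mathbf{V}}$, and the function-theoretic pullback $j^{\ast}_{\mathbf{V}}:\mathcal{F}^n_{\mathbf{V}} \to {_i\mathcal{F}}^n_{\mathbf{V}}$ introduced in Section \ref{subsection:4.2} takes values in ${_i\mathcal{F}}^n_{\mathbf{V}}$ (this is how ${_i\mathcal{F}}^n$ is organized, and is implicit in Proposition \ref{theorem:4.5}). Combining the two displayed equalities yields ${_i\chi}^n(j^{\ast}_{\mathbf{V}}\mathcal{L}) \in {_i\mathcal{F}}^n_{\mathbf{V}}$, as required.

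The only substantive point is the pullback-compatibility $\chi^n_{j^{\ast}_{\mathbf{V}}\mathcal{L}} = j^{\ast}_{\mathbf{V}}\chi^n_{\mathcal{L}}$; however this is a formal consequence of proper base change together with the fact that $j_{\mathbf{V}}$ is an open immersion, so Frobenius fixed points on ${_iE}_{\mathbf{V}}$ are exactly those on $E_{\mathbf{V}}$ lying in ${_iE}_{\mathbf{V}}$, and stalks of $j^{\ast}_{\mathbf{V}}\mathcal{L}$ coincide with stalks of $\mathcal{L}$ at those points. Hence no genuine difficulty arises; the bulk of the work was already done in establishing (\ref{equation:3.2.2}) and in Lusztig's theorem on $\chi^n(\mathcal{Q}_{\mathbf{V}})$.
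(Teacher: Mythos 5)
Your proposal is correct and follows essentially the same route as the paper: it rests on the identity ${_i\mathcal{Q}}_{\mathbf{V}}=j^{\ast}_{\mathbf{V}}(\mathcal{Q}_{\mathbf{V}})$ from (\ref{equation:3.2.2}), the compatibility of the sheaf-to-function correspondence with $j^{\ast}_{\mathbf{V}}$ (the commutative diagram (\ref{equation:4.2.2}), which the paper also cites from Kiehl--Weissauer), Lusztig's inclusion $\chi^n(\mathcal{Q}_{\mathbf{V}})\subset\mathcal{F}^n_{\mathbf{V}}$, and the inclusion $j^{\ast}_{\mathbf{V}}(\mathcal{F}^n_{\mathbf{V}})\subset{_i\mathcal{F}}^n_{\mathbf{V}}$. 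The only cosmetic difference is that you spell out the stalkwise base-change reason behind $\chi^n_{j^{\ast}_{\mathbf{V}}\mathcal{L}}=j^{\ast}_{\mathbf{V}}\chi^n_{\mathcal{L}}$, whereas the paper simply quotes Theorem~III.12.1(5) of Kiehl--Weissauer.
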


\begin{proof}[\bf{Proof}]

By the properties of $\chi$ and $_i\chi$ (Theorem III.12.1(5) in \cite{Kiehl_Weissauer_Weil_conjectures_perverse_sheaves_and_l'adic_Fourier_transform}), we have the following commutative diagram
\begin{equation}\label{equation:4.2.2}
\xymatrix{
\mathcal{Q}_{\mathbf{V}}\ar[r]^-{j^{\ast}_\mathbf{V}}\ar[d]^-{\chi^n}&{_i\mathcal{Q}}_{\mathbf{V}}\ar[d]^-{_i\chi^n}\\
\mathcal{F}^n_{\mathbf{V}}\ar[r]^-{j^{\ast}_\mathbf{V}}&{_i\underline{\mathcal{F}}}^{n}_{\mathbf{V}}
}
\end{equation}
By the commutative diagram (\ref{equation:4.2.2}), $j^{\ast}_\mathbf{V}(\mathcal{F}^n_{\mathbf{V}})\subset{_i\mathcal{F}}^{n}_{\mathbf{V}}$ and $j^{\ast}_{\mathbf{V}}(\mathcal{Q}_{\mathbf{V}})={_i\mathcal{Q}}_{\mathbf{V}}$, we have ${_i\chi}^n({_i\mathcal{Q}}_{\mathbf{V}})\subset{_i\mathcal{F}}^{n}_{\mathbf{V}}$.

\end{proof}

Hence, we can define
${_i\chi}^n:{_i\mathcal{Q}}_{\mathbf{V}}\rightarrow{_i\mathcal{F}}^{n}_{\mathbf{V}}$,
which induces
${_i\chi}:{_i\mathcal{Q}}_{\mathbf{V}}\rightarrow{_i\mathcal{F}}_{\mathbf{V}}$
and
${_i\chi_{\mathcal{A}}}:K(_i\mathcal{Q})\rightarrow{_i\mathcal{F}}_{\mathcal{A}}$.

The commutative diagram (\ref{equation:4.2.2}) implies the following proposition.
\begin{proposition}\label{cor:5.2}
We have the following commutative diagram
$$\xymatrix{
K(\mathcal{Q})\ar[r]^-{j^{\ast}}\ar[d]^-{\chi_{\mathcal{A}}}&K(_i\mathcal{Q})\ar[d]^-{{_i\chi}_{\mathcal{A}}}\\
\mathcal{F}_{\mathcal{A}}\ar[r]^-{j^{\ast}}&{_i\mathcal{F}}_{\mathcal{A}}
}$$
\end{proposition}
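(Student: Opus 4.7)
The statement is essentially a Grothendieck-group shadow of the categorical commutativity already established in diagram (\ref{equation:4.2.2}). The plan is therefore to verify that each of the four maps in the claimed diagram is induced on $K$-groups by an honest functor or morphism of spaces of $G^{F^n}_{\mathbf{V}}$-invariant functions, and then to invoke (\ref{equation:4.2.2}) pointwise on classes of objects.

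First I would fix an $I$-graded $\mathbb{K}$-vector space $\mathbf{V}$ and a class $[\mathcal{L}]$ with $\mathcal{L}\in\mathcal{Q}_{\mathbf{V}}$. By the very definition of $j^{\ast}:K(\mathcal{Q})\rightarrow K({_i\mathcal{Q}})$ given in Section \ref{subsection:3.2}, one has $j^{\ast}[\mathcal{L}]=[j^{\ast}_{\mathbf{V}}\mathcal{L}]$. Applying ${_i\chi_{\mathcal{A}}}$, which factors through the family $\{{_i\chi}^n\}_{n\geq 1}$ on ${_i\mathcal{Q}}_{\mathbf{V}}$ by Proposition \ref{prop:5.5}, reduces the problem to showing
\begin{displaymath}
{_i\chi}^n\bigl(j^{\ast}_{\mathbf{V}}\mathcal{L}\bigr)\;=\;j^{\ast}_{\mathbf{V}}\bigl(\chi^n(\mathcal{L})\bigr)\qquad\text{in }{_i\underline{\mathcal{F}}}^{n}_{\mathbf{V}}
\end{displaymath}
for every $n$. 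But this is exactly the commutativity of the square (\ref{equation:4.2.2}) evaluated at $\mathcal{L}$, which in turn is just the statement that the characteristic-function construction is compatible with the pullback $j^{\ast}_{\mathbf{V}}$ along the open immersion ${_iE}^{F^n}_{\mathbf{V}}\hookrightarrow E^{F^n}_{\mathbf{V}}$ (Theorem III.12.1(5) of \cite{Kiehl_Weissauer_Weil_conjectures_perverse_sheaves_and_l'adic_Fourier_transform}, already cited in the proof of Proposition \ref{prop:5.5}).

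Second I would check that the action of $v$, defined on $K(\mathcal{Q})$ and $K({_i\mathcal{Q}})$ by $v^{\pm}[\mathcal{L}]=[\mathcal{L}[\pm 1](\pm\tfrac{1}{2})]$, is transported correctly. Since $j^{\ast}_{\mathbf{V}}$ commutes with shifts and with Tate twists, $j^{\ast}$ is $\mathcal{A}$-linear on the $K$-groups; and since under $\chi^n$ a shift $[1]$ contributes a sign and a Tate twist $(\tfrac{1}{2})$ contributes multiplication by $\sqrt{q^n}$, the maps $\chi_{\mathcal{A}}$ and ${_i\chi_{\mathcal{A}}}$ are also $\mathcal{A}$-linear once one passes to the generic form. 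Consequently the pointwise equality above promotes to an equality of $\mathcal{A}$-module maps on all of $K(\mathcal{Q})$.

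There is no substantial obstacle: the content is entirely a bookkeeping verification that each vertical and horizontal map in the desired square factors through the already commutative categorical diagram (\ref{equation:4.2.2}), together with the routine check that shifts and Tate twists intertwine with $j^{\ast}_{\mathbf{V}}$. The only point requiring mild care is to confirm that the horizontal map $j^{\ast}:\mathcal{F}_{\mathcal{A}}\rightarrow{_i\mathcal{F}}_{\mathcal{A}}$ in the claim is exactly the generic-form limit of the function-level pullbacks $j^{\ast}_{\mathbf{V}}:\mathcal{F}^n_{\mathbf{V}}\rightarrow{_i\mathcal{F}}^{n}_{\mathbf{V}}$, which is how it was introduced in Section \ref{subsection:4.2}.
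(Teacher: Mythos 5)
Your argument is correct and follows exactly the route the paper intends: the paper's own proof is simply the remark that the commutative diagram (\ref{equation:4.2.2}) induces the desired diagram on Grothendieck groups, which is precisely what you spell out, together with the (routine but worth noting) $\mathcal{A}$-linearity check for shifts and Tate twists.
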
\qed

\begin{proof}[\bf{Proof of Proposition \ref{theorem:3.5}}]
First, we shall prove the following commutative diagram
\begin{displaymath}
\xymatrix{
{_i\mathbf{f}}_{\mathcal{A}}\ar[r]^-{_i\varpi_{\mathcal{A}}}\ar[d]^-{_i\lambda'_{\mathcal{A}}}&{_i\mathcal{F}}_\mathcal{A} \\
K(_i\mathcal{Q})\ar[ru]_-{_i\chi_{\mathcal{A}}}&
}
\end{displaymath}

Consider the following diagram
$$\xymatrix{
\mathbf{f}_{\mathcal{A}}\ar[d]\ar[r]\ar@(dl,ul)[dd] &{_i\mathbf{f}}_{\mathcal{A}}\ar[d]\ar@(dr,ur)[dd] \\
K(\mathcal{Q})\ar[d]\ar[r]&K(_i\mathcal{Q})\ar[d]\\
\mathcal{F}_\mathcal{A}\ar[r]&{_i\mathcal{F}}_\mathcal{A}
}$$
Since three squares and the triangle in the left are commutative, the triangle in the right is also commutative.

Proposition \ref{theorem:4.5} implies that ${_i\varpi_{\mathcal{A}}}:{_i\mathbf{f}}_{\mathcal{A}}\rightarrow {_i\mathcal{F}}_\mathcal{A}$ is isomorphic. Hence ${_i\lambda'_{\mathcal{A}}}:{_i\mathbf{f}}_{\mathcal{A}}\rightarrow K(_i\mathcal{Q})$ is injective. The commutative diagram (\ref{equation:3.2.1}) in the definition of ${_i\lambda'_{\mathcal{A}}}$ implies ${_i\lambda'_{\mathcal{A}}}:{_i\mathbf{f}}_{\mathcal{A}}\rightarrow K(_i\mathcal{Q})$ is surjection. Hence, ${_i\lambda'_{\mathcal{A}}}:{_i\mathbf{f}}_{\mathcal{A}}\rightarrow K(_i\mathcal{Q})$ is isomorphic.

\end{proof}

In the proof, we get the following proposition.

\begin{proposition}
We have the following commutative diagram
$$\xymatrix{K(_i\mathcal{Q})\ar[r]^-{_i\lambda_{\mathcal{A}}}\ar[d]^-{_i\chi_{\mathcal{A}}}&{_i\mathbf{f}}_{\mathcal{A}}\ar[ld]^-{_i\varpi_{\mathcal{A}}}\\
      {_i\mathcal{F}}_{\mathcal{A}}}$$
where all maps are isomorphisms of $\mathcal{A}$-algebras and ${_i\lambda_{\mathcal{A}}}$ is the inverse of ${_i\lambda'_{\mathcal{A}}}$.
\end{proposition}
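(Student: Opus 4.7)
The plan is to observe that this proposition is essentially a formal consequence of what was just established in the proof of Proposition \ref{theorem:3.5}, together with Proposition \ref{theorem:4.5}.

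First, I would note that the proof of Proposition \ref{theorem:3.5} already exhibited the commutative triangle
\begin{displaymath}
\xymatrix{
{_i\mathbf{f}}_{\mathcal{A}}\ar[r]^-{_i\varpi_{\mathcal{A}}}\ar[d]^-{_i\lambda'_{\mathcal{A}}}&{_i\mathcal{F}}_\mathcal{A} \\
K(_i\mathcal{Q})\ar[ur]_-{_i\chi_{\mathcal{A}}}&
}
\end{displaymath}
and showed that ${_i\lambda'_{\mathcal{A}}}$ is an isomorphism of $\mathcal{A}$-algebras. Since ${_i\lambda'_{\mathcal{A}}}$ is an isomorphism of $\mathcal{A}$-algebras, I would simply define ${_i\lambda_{\mathcal{A}}}$ to be its (automatically $\mathcal{A}$-algebra) inverse; this gives one of the asserted isomorphisms for free.

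Next, I would invert the triangle above: since ${_i\chi_{\mathcal{A}}}\circ{_i\lambda'_{\mathcal{A}}}={_i\varpi_{\mathcal{A}}}$, composing on the right with ${_i\lambda_{\mathcal{A}}}$ yields
\begin{displaymath}
{_i\chi_{\mathcal{A}}}={_i\varpi_{\mathcal{A}}}\circ{_i\lambda_{\mathcal{A}}},
\end{displaymath}
which is exactly the commutativity statement required by the displayed diagram. Finally, by Proposition \ref{theorem:4.5}, the map ${_i\varpi_{\mathcal{A}}}:{_i\mathbf{f}}_{\mathcal{A}}\rightarrow{_i\mathcal{F}}_{\mathcal{A}}$ is an $\mathcal{A}$-algebra isomorphism, so the identity ${_i\chi_{\mathcal{A}}}={_i\varpi_{\mathcal{A}}}\circ{_i\lambda_{\mathcal{A}}}$ exhibits ${_i\chi_{\mathcal{A}}}$ as a composition of two isomorphisms, hence itself an isomorphism of $\mathcal{A}$-algebras.

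There is no substantive obstacle here; all the geometric content (the construction of ${_i\chi_{\mathcal{A}}}$, the fact that ${_i\chi}^n$ lands in ${_i\mathcal{F}}^n_{\mathbf{V}}$, and the compatibility square of Proposition \ref{cor:5.2}) has already been absorbed into the proof of Proposition \ref{theorem:3.5}. The only thing left is the purely formal step of inverting the already-established triangle and reading off that the third arrow is an isomorphism.
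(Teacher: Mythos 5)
Your argument is correct and coincides with the paper's treatment: the paper simply remarks that this proposition falls out of the proof of Proposition \ref{theorem:3.5} (where the triangle $_i\chi_{\mathcal{A}}\circ{_i\lambda'_{\mathcal{A}}}={_i\varpi_{\mathcal{A}}}$ and the invertibility of $_i\lambda'_{\mathcal{A}}$ and $_i\varpi_{\mathcal{A}}$ were already established), and you have merely written out the formal composition with the inverse $_i\lambda_{\mathcal{A}}$ that the paper leaves implicit. Nothing further is needed.
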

\qed

Assume that $i$ is a source of $Q$. The notations and results in this case are completely similar to the case that $i$ is a sink. We can define ${^i\underline{\mathcal{F}}}^n_{\mathbf{V}}$,
${^i\mathcal{F}}^n=\bigoplus_{\nu\in\mathbb{N}I}{^i\mathcal{F}}^n_{\mathbf{V}}$
and
${^i\mathcal{F}}=\bigoplus_{\nu\in\mathbb{N}I}{^i\mathcal{F}}_{\mathbf{V}}$.
We also have two maps $j^{\ast}_{\mathbf{V}}:\mathcal{F}_\mathbf{V}\rightarrow{^i\mathcal{F}}_\mathbf{V}$ and ${j_{\mathbf{V}}}_{!}:{^i\mathcal{F}}_{\mathbf{V}}\rightarrow \mathcal{F}_{\mathbf{V}}$.
Considering all dimension vectors, we have $j_{!}:{^i\mathcal{F}}\rightarrow\mathcal{F}$ and $j^{\ast}:\mathcal{F}\rightarrow{^i\mathcal{F}}$.

\begin{proposition}[\cite{Lusztig_Canonical_bases_and_Hall_algebras}]
We have the following commutative diagram
$$\xymatrix{{^i\mathbf{f}}\ar[r]\ar[d]_{\cong}^-{^i\varpi}&\mathbf{f}\ar[d]_{\cong}^-{\varpi}\ar[r]^-{^i\pi}&{^i\mathbf{f}}\ar[d]_{\cong}^-{^i\varpi}\\
  {^i\mathcal{F}}\ar[r]^-{j_{!}}&\mathcal{F}\ar[r]^-{j^{\ast}}&{^i\mathcal{F}}}$$
where $^i\varpi$ is the isomorphism induced by $\varpi$.
\end{proposition}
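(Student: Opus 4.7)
The plan is to mirror the argument of Proposition \ref{theorem:4.5}, exchanging the roles of submodules and quotients throughout. Since $i$ is a source of $Q$, the simple module $S_i$ at vertex $i$ is injective (no arrows terminate at $i$), and unwinding the defining injectivity condition of ${^iE}_{\mathbf{V}}$ shows that $x\in E_{\mathbf{V}}$ lies in ${^iE}_{\mathbf{V}}$ if and only if $x$ has no nonzero subrepresentation isomorphic to a sum of copies of $S_i$. In particular ${^iE}_{\mathbf{V}}$ is an open subvariety of $E_{\mathbf{V}}$, so $j^{\ast}_{\mathbf{V}}:\mathcal{F}_{\mathbf{V}}\to{^i\mathcal{F}}_{\mathbf{V}}$ (restriction of functions) and $j_{\mathbf{V}!}:{^i\mathcal{F}}_{\mathbf{V}}\to\mathcal{F}_{\mathbf{V}}$ (extension by zero) are well-defined and satisfy $j^{\ast}_{\mathbf{V}}\circ j_{\mathbf{V}!}=\mathrm{id}$, inducing a direct sum decomposition $\mathcal{F}_{\mathbf{V}}=j_{\mathbf{V}!}({^i\mathcal{F}}_{\mathbf{V}})\oplus\ker j^{\ast}_{\mathbf{V}}$.

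First I would verify the right square by showing $\varpi(\mathbf{f}\theta_i)\subseteq\ker j^{\ast}$. Expanding the Hall product, $\varpi(y)\cdot f_i$ evaluated at a representation $x$ is a weighted count of subrepresentations $S_i\hookrightarrow x$ whose quotient lies in the support of $\varpi(y)$; such subrepresentations exist only when $x$ contains a copy of $S_i$ as a submodule, equivalently $x\notin{^iE}_{\mathbf{V}}$. Hence $j^{\ast}\circ\varpi$ vanishes on $\mathbf{f}\theta_i$, so it factors through ${^i\pi}$, and by the definition of ${^i\varpi}$ (as the map induced by $\varpi$ on $\mathbf{f}/\mathbf{f}\theta_i\cong{^i\mathbf{f}}$) the right square commutes. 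The left square then reduces to the identity $\varpi({^i\mathbf{f}})=j_!({^i\mathcal{F}})$, which follows by combining the inclusion $\varpi(\mathbf{f}\theta_i)\subseteq\ker j^{\ast}$ with a graded dimension count in each $\nu\in\mathbb{N}I$, using the injectivity of right multiplication by $\theta_i$ on $\mathbf{f}$.

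The main obstacle is the graded dimension count needed to upgrade $\varpi(\mathbf{f}\theta_i)\subseteq\ker j^{\ast}$ to equality; this is essentially the substance of Lusztig's original argument in \cite{Lusztig_Canonical_bases_and_Hall_algebras}. A cleaner alternative is to reduce the entire proposition to Proposition \ref{theorem:4.5} via the opposite quiver $Q^{\mathrm{op}}$: reversing all arrows turns the source $i$ into a sink, leaves $\mathbf{f}$ unchanged (its relations depend only on the symmetric matrix $A$, not on the orientation of $Q$), and canonically identifies ${^i\mathbf{f}}\leftrightarrow{_i\mathbf{f}}$, ${^iE}_{\mathbf{V}}\leftrightarrow{_iE}_{\mathbf{V}}$, and the associated Hall algebras, so the desired diagram for the source becomes exactly the diagram of Proposition \ref{theorem:4.5} applied to $Q^{\mathrm{op}}$.
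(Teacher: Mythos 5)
The paper does not prove this proposition: it is stated, like its sink counterpart (Proposition \ref{theorem:4.5}), with a direct citation to Lusztig's \emph{Canonical bases and Hall algebras}, and the text merely remarks that ``the notations and results in this case are completely similar to the case that $i$ is a sink.'' So there is no in-paper proof to compare against; what follows is an assessment of your sketch on its own terms.

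Your first argument is in the right spirit and correctly identifies the two substantive steps. The characterizations are right: ${^iE}_{\mathbf{V}}$ is open, $S_i$ is injective when $i$ is a source, and $j^{\ast}_{\mathbf{V}}\circ j_{\mathbf{V}!}=\mathrm{id}$ gives the decomposition $\mathcal{F}_{\mathbf{V}}=j_{\mathbf{V}!}({^i\mathcal{F}}_{\mathbf{V}})\oplus\ker j^{\ast}_{\mathbf{V}}$. The right-square argument is also correct: with Lusztig's Hall convention, $(\varpi(y)* f_i)(x)$ is supported on those $x$ admitting a subrepresentation isomorphic to $S_i$, i.e.\ on $E_{\mathbf{V}}\setminus{^iE}_{\mathbf{V}}$, so $\varpi(\mathbf{f}\theta_i)\subseteq\ker j^{\ast}$ and the right square commutes once $^i\varpi$ is defined as the induced map. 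You are also right, and honest, that upgrading $\varpi(\mathbf{f}\theta_i)\subseteq\ker j^{\ast}$ and $\varpi({^i\mathbf{f}})\subseteq\mathrm{im}\,j_{!}$ to equalities is the real content and is exactly what is established in the cited reference; this is not a cosmetic gap.

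The ``cleaner alternative'' via $Q^{\mathrm{op}}$ is a legitimate reduction idea, but it is phrased in a way that hides a twist you must carry along. Reversing all arrows does leave $\mathbf f$ unchanged as an algebra, but ${^i\mathbf f}$ and ${_i\mathbf f}$ are two genuinely different subalgebras of that same $\mathbf f$ (one generated by the $f(i,j;m)$, the other by the $f'(i,j;m)$); they are exchanged by the anti-involution $\sigma$ of $\mathbf f$ fixing the $\theta_i$, not ``canonically identified'' by the identity. Likewise, the natural transpose map $E_{\mathbf V,Q}\cong E_{\mathbf V^{\ast},Q^{\mathrm{op}}}$ turns subrepresentations into quotients, so it induces an \emph{anti}-isomorphism $\mathcal F_{Q}\cong\mathcal F_{Q^{\mathrm{op}}}$, not an isomorphism. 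To transport the sink diagram for $Q^{\mathrm{op}}$ to the source diagram for $Q$ you must therefore conjugate both rows by anti-isomorphisms ($\sigma$ on the $\mathbf f$ side, transpose on the Hall side) and check that $\varpi$ intertwines them, i.e.\ that $\tau\circ\varpi_{Q}\circ\sigma=\varpi_{Q^{\mathrm{op}}}$. This is true and not hard, but it is the point of the reduction and should not be suppressed; as written, ``canonically identifies'' would mislead a reader into thinking no twist is needed, which is false since ${^i\mathbf f}\neq{_i\mathbf f}$ inside $\mathbf f$.
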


We can also define
${^i\chi}:{^i\mathcal{Q}}_{\mathbf{V}}\rightarrow{^i\mathcal{F}}_{\mathbf{V}}$
and
${^i\chi_{\mathcal{A}}}:K(^i\mathcal{Q})\rightarrow{^i\mathcal{F}}_{\mathcal{A}}$.

\begin{proposition}
We have the following commutative diagram
$$\xymatrix{
K(\mathcal{Q})\ar[r]^-{j^{\ast}}\ar[d]^-{\chi_{\mathcal{A}}}&K(^i\mathcal{Q})\ar[d]^-{{^i\chi}_{\mathcal{A}}}\\
\mathcal{F}_{\mathcal{A}}\ar[r]^-{j^{\ast}}&{^i\mathcal{F}}_{\mathcal{A}}
}$$
\end{proposition}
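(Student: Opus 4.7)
The plan is to mirror, line for line, the proof of the sink-case analog (Proposition \ref{cor:5.2}), with ``source'' replacing ``sink'' throughout and the superscript $i$ replacing the subscript $i$.

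First, I would fix an $I$-graded $\mathbb{K}$-vector space $\mathbf{V}$. The embedding $j_{\mathbf{V}}:{^iE}_{\mathbf{V}}\rightarrow E_{\mathbf{V}}$ is cut out by the open condition that $\bigoplus_{h\in H,\,s(h)=i}x_{h}$ be injective, which is Frobenius-stable, so $j_\mathbf{V}$ is defined over $\mathbb{F}_q$ and restricts to a map ${^iE}^{F^n}_{\mathbf{V}}\rightarrow E^{F^n}_{\mathbf{V}}$. Applying the compatibility of the characteristic function map with $\ast$-pullback (Theorem III.12.1(5) in \cite{Kiehl_Weissauer_Weil_conjectures_perverse_sheaves_and_l'adic_Fourier_transform}) to $j_\mathbf{V}$ yields the commutative square
\[
\xymatrix{
\mathcal{Q}_{\mathbf{V}}\ar[r]^-{j^{\ast}_\mathbf{V}}\ar[d]^-{\chi^n}&{^i\mathcal{Q}}_{\mathbf{V}}\ar[d]^-{{^i\chi}^n}\\
\mathcal{F}^n_{\mathbf{V}}\ar[r]^-{j^{\ast}_\mathbf{V}}&{^i\underline{\mathcal{F}}}^{n}_{\mathbf{V}}
}
\]
for every $n$. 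The same fiber-product computation as in (\ref{equation:3.2.2}) gives $j^{\ast}_{\mathbf{V}}(\mathcal{Q}_{\mathbf{V}})={^i\mathcal{Q}}_{\mathbf{V}}$, and the verbatim analog of Proposition \ref{prop:5.5} then shows ${^i\chi}^n({^i\mathcal{Q}}_{\mathbf{V}})\subset{^i\mathcal{F}}^{n}_{\mathbf{V}}$ and $j^{\ast}_\mathbf{V}(\mathcal{F}^n_{\mathbf{V}})\subset{^i\mathcal{F}}^n_{\mathbf{V}}$, so that the right-hand column of the square above may be replaced by ${^i\chi}^n:{^i\mathcal{Q}}_{\mathbf{V}}\rightarrow{^i\mathcal{F}}^n_{\mathbf{V}}$.

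Finally, I would pass to the generic form by letting $n$ vary, sum over $\nu\in\mathbb{N}I$, and apply the natural map sending a semisimple perverse sheaf to its class in the Grothendieck group; this produces the required commutative diagram. The only thing to check is that every ingredient used in the sink-case proof (the definition of $j^{\ast}_\mathbf{V}$ on $^i\mathcal{Q}_{\mathbf{V}}$, the passage of $^i\chi$ to the generic form, and the compatibility with Tate twists in the definition of $v^{\pm}$ on $K({^i\mathcal{Q}})$) admits a source-case version with identical formal properties; all of this is already recorded in Sections \ref{subsection:3.2} and \ref{subsection:4.2}, so I do not foresee any real obstacle: the argument is essentially tautologous given the setup built in the preceding subsections.
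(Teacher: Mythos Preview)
Your proposal is correct and follows exactly the route the paper takes: the paper simply declares that the source case is ``completely similar'' to the sink case and marks this proposition with a bare \qed, so your line-by-line transcription of the sink-case argument (Proposition~\ref{cor:5.2}, which itself is just the observation that diagram~(\ref{equation:4.2.2}) passes to Grothendieck groups) is precisely what the authors intend.
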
\qed

\begin{proposition}
We have the following commutative diagram
$$\xymatrix{K(^i\mathcal{Q})\ar[r]^-{^i\lambda_{\mathcal{A}}}\ar[d]^-{^i\chi_{\mathcal{A}}}&{^i\mathbf{f}}_{\mathcal{A}}\ar[ld]^-{^i\varpi_{\mathcal{A}}}\\
      {^i\mathcal{F}}_{\mathcal{A}}}$$
where all maps are isomorphisms of $\mathcal{A}$-algebras and ${^i\lambda_{\mathcal{A}}}$ is the inverse of ${^i\lambda'_{\mathcal{A}}}$.
\end{proposition}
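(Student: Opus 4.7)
The plan is to mirror, mutatis mutandis, the proof of Proposition~\ref{theorem:3.5}, replacing each sink-case ingredient with its source-case analog; every such analog has already been set up in the preceding paragraphs of this subsection. Specifically, I would assemble the two-column tower
$$\xymatrix{
\mathbf{f}_{\mathcal{A}}\ar[d]_-{\lambda'_{\mathcal{A}}}\ar[r]^-{^i\pi_{\mathcal{A}}}\ar@(dl,ul)[dd]_-{\varpi_{\mathcal{A}}} &{^i\mathbf{f}}_{\mathcal{A}}\ar[d]^-{^i\lambda'_{\mathcal{A}}}\ar@(dr,ur)[dd]^-{^i\varpi_{\mathcal{A}}} \\
K(\mathcal{Q})\ar[d]_-{\chi_{\mathcal{A}}}\ar[r]^-{j^{\ast}}&K({^i\mathcal{Q}})\ar[d]^-{^i\chi_{\mathcal{A}}}\\
\mathcal{F}_{\mathcal{A}}\ar[r]^-{j^{\ast}}&{^i\mathcal{F}}_{\mathcal{A}}
}$$
in which the top square is the defining diagram of $^i\lambda'_{\mathcal{A}}$ (the source-case version of (\ref{equation:3.2.1})), the bottom square is the source-case analog of Proposition~\ref{cor:5.2} stated immediately above, and the outer left triangle is the content of Proposition~\ref{theorem:4.3}.

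The core step is then a one-line diagram chase: both inner squares and the outer left triangle commute by construction, which forces the outer right triangle ${^i\chi_{\mathcal{A}}}\circ{^i\lambda'_{\mathcal{A}}}={^i\varpi_{\mathcal{A}}}$ to commute as well. That is precisely the commutativity claimed in the statement.

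It remains to check that all three arrows of the triangle are isomorphisms. The source-case analog of Proposition~\ref{theorem:4.5}, stated just above, gives that $^i\varpi_{\mathcal{A}}$ is an isomorphism. Combined with the triangle just obtained, this forces $^i\lambda'_{\mathcal{A}}$ to be injective; surjectivity of $^i\lambda'_{\mathcal{A}}$ reads off its defining diagram together with the surjectivity of $^i\pi_{\mathcal{A}}$ and of $j^{\ast}\colon K(\mathcal{Q})\to K({^i\mathcal{Q}})$, exactly as in the sink-case proof. A two-out-of-three argument then shows that $^i\chi_{\mathcal{A}}$ is an isomorphism, and $^i\lambda_{\mathcal{A}}=({^i\lambda'_{\mathcal{A}}})^{-1}$ is an isomorphism by definition. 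I do not anticipate any genuine obstacle: the argument is completely symmetric to the sink case, with $_i$ replaced by $^i$, ``sink'' replaced by ``source,'' and the arrows at $i$ reversed; every required input for the source side has already been formulated in the preceding propositions of this subsection.
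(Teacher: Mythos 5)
Your proposal is correct and follows exactly the route the paper intends: the paper gives no proof for this proposition (just a \qed), and the expectation is the reader mirrors the sink-case argument from the proof of Proposition~\ref{theorem:3.5}, which is what you do. One small bookkeeping fix: for the diagram chase you need not only the two inner squares and the left triangle but also the outer square $j^{\ast}\circ\varpi_{\mathcal{A}}={^i\varpi_{\mathcal{A}}}\circ{^i\pi_{\mathcal{A}}}$ (the source-case analog of Proposition~\ref{theorem:4.5}) in order to identify the induced composite with ${^i\varpi_{\mathcal{A}}}$ — you invoke that proposition only for the isomorphism claim, but its commutativity is what closes the chase; the paper's sink-case proof explicitly lists ``three squares'' for this reason.
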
\qed

\subsection{Hall algebra approach to $T_i:{_i\mathbf{f}}\rightarrow{^i\mathbf{f}}$ and the proof of Theorem \ref{theorem:3.10}}\label{subsection:4.4}

Let $i$ be a sink of a quiver $Q=(I,H,s,t)$. So $i$ is a source of $Q'=\sigma_iQ=(I,H',s,t)$. For any $\nu$ and $\nu'\in\mathbb{N}I$ such that  $\nu'=s_i\nu$,
and two  $I$-graded $\mathbb{K}$-vector spaces $\mathbf{V}$ and $\mathbf{V}'$ such that $\underline{\dim}\mathbf{V}=\nu$ and $\underline{\dim}\mathbf{V}'=\nu'$,
the following correspondence is considered  in Section \ref{subsection:3.4}
$$\xymatrix{
_iE_{\mathbf{V},Q}&Z_{\mathbf{V}\mathbf{V}'}\ar[l]_-{\alpha}\ar[r]^-{\beta}&^iE_{\mathbf{V}',Q'}}.
$$
Similarly, $Z^{F^n}_{\mathbf{V}\mathbf{V}'}$ is defined as the $F_n$-fixed points set in $Z_{\mathbf{V}\mathbf{V}'}$ and we have
$$
\xymatrix{
_iE^{F^n}_{\mathbf{V},Q}&Z^{F^n}_{\mathbf{V}\mathbf{V}'}\ar[l]_-{\alpha}\ar[r]^-{\beta}&^iE^{F^n}_{\mathbf{V}',Q'}}.
$$
Note that $\alpha$ and $\beta$ are principal bundles with fibers $Aut(V'_i)$ and  $Aut(V_i)$ respectively. Hence, for any $f\in{_i\underline{\mathcal{F}}}^n_\mathbf{V}$, there exists a unique $g\in{^i\underline{\mathcal{F}}}^n_{\mathbf{V}'}$ such that $\alpha^{\ast}(f)=\beta^{\ast}(g)$. Define
\begin{eqnarray*}
\omega_i:{_i\underline{\mathcal{F}}}^n_\mathbf{V}&\rightarrow&{^i\underline{\mathcal{F}}}^n_{\mathbf{V}'}\\
f&\mapsto&(p^n)^{-\frac{s(\mathbf{V})}{2}}g
\end{eqnarray*}
Lusztig proved that $\omega_i({_i\mathcal{F}}^n_\mathbf{V})\subset{^i\mathcal{F}}^n_{\mathbf{V}'}$. Hence, we have
$\omega_i:{_i\mathcal{F}}^n_\mathbf{V}\rightarrow{^i\mathcal{F}}^n_{\mathbf{V}'}$ and
$\omega_i:{_i\mathcal{F}}_\mathbf{V}\rightarrow{^i\mathcal{F}}_{\mathbf{V}'}$. Considering all dimension vectors, we have
$\omega_i:{_i\mathcal{F}}\rightarrow{^i\mathcal{F}}$.

Lusztig proved the following theorem.

\begin{theorem}[\cite{Lusztig_Canonical_bases_and_Hall_algebras}]\label{theorem:4.17}
We have the following commutative diagram
$$\xymatrix{{_i\mathbf{f}}\ar[r]^-{T_i}\ar[d]^-{_i\varpi}&{^i\mathbf{f}}\ar[d]^-{^i\varpi}\\_i\mathcal{F}\ar[r]^-{\omega_i}&^i\mathcal{F}}$$
\end{theorem}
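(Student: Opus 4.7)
Both $^i\varpi \circ T_i$ and $\omega_i \circ {_i\varpi}$ are $\mathbb{Q}(v)$-algebra homomorphisms from $_i\mathbf{f}$ to $^i\mathcal{F}$ (using Theorem \ref{theorem:4.2} and Proposition \ref{theorem:4.5}). Since $_i\mathbf{f}$ is generated as an algebra by the elements $f(i,j;m)$ with $j\in I\setminus\{i\}$ and $0\le m\le -a_{ij}$, it suffices to verify commutativity on each such generator. By Proposition \ref{proposition:5.1}, this reduces to establishing, for each generator,
\[
\omega_i\bigl({_i\varpi}(f(i,j;m))\bigr) \;=\; {^i\varpi}\bigl(f'(i,j;\,-a_{ij}-m)\bigr).
\]

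The first step is to unpack $_i\varpi(f(i,j;m))$. By Proposition \ref{theorem:4.5} the map $_i\varpi$ is obtained from $\varpi$ by projecting to $_i\mathbf{f}$ and transporting to $_i\mathcal{F}$ via $j_{\mathbf{V}}^\ast$. Combined with Theorem \ref{theorem:4.2}, this realises $_i\varpi(f(i,j;m))$ as an explicit $G_{\mathbf{V}}$-invariant function on $_iE_{\mathbf{V},Q}$ (with $\underline{\dim}\mathbf{V}=mi+j$), namely the alternating Hall sum
\[
\sum_{r+s=m}(-1)^r v^{-r(-a_{ij}-m+1)}\, j_{\mathbf{V}}^\ast\!\bigl(f_i^{(r)} * f_j * f_i^{(s)}\bigr).
\]

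The second step is to apply $\omega_i$ through the correspondence (\ref{equation:3.4.2}). The defining short exact sequence $0\to V'_i\to\bigoplus_{h:t(h)=i}V_{s(h)}\to V_i\to 0$ identifies $Z_{\mathbf{V}\mathbf{V}'}$ with the graph of the BGP reflection at the sink $i$. Hence descent along the principal bundles $\alpha$ and $\beta$, twisted by $(p^n)^{-s(\mathbf{V})/2}$ with $s(\mathbf{V})=m^2-m'^2$ (where $m'=-a_{ij}-m$), implements the BGP reflection functor at the Hall-algebra level. Under this reflection the restricted product $j_{\mathbf{V}}^\ast(f_i^{(r)} * f_j * f_i^{(s)})$ on $_iE_{\mathbf{V},Q}$ corresponds to $j_{\mathbf{V}'}^\ast(f_i^{(s')} * f_j * f_i^{(r')})$ on $^iE_{\mathbf{V}',Q'}$ for appropriate $r',s'$ with $r'+s'=m'$, the swap of left and right factors around $f_j$ matching precisely the shape of $f'(i,j;m')$ against $f(i,j;m)$.

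The third step is to compare with $^i\varpi(f'(i,j;m'))$ term by term. The exponent $-r(-a_{ij}-m+1)$ transforms into $-r'(-a_{ij}-m'+1)$ under the reindexing $r\mapsto r'$, and the twist $(p^n)^{-s(\mathbf{V})/2}$, which specialises to $v^{-s(\mathbf{V})}$, absorbs the dimension difference between the flag varieties on the two sides. The main obstacle will be the bookkeeping of $v$-powers and signs arising from the BGP correspondence. A cleaner route is to first treat the base case $m=0$, where $f(i,j;0)=\theta_j$ corresponds to the simple module $S_j$, and the identity reduces to computing the BGP image of $S_j$ directly; then induct on $m$, using the divided-power Serre relation (modulo which the family $\{f(i,j;m)\}_{m\ge 0}$ is truncated) to reduce the general case to the base.
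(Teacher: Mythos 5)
The paper does not prove this theorem: it is quoted verbatim from Lusztig's \emph{Canonical bases and Hall algebras} and used as an input to derive Theorem~\ref{theorem:3.10}. So there is no in-paper proof to compare with; I can only assess your sketch on its own merits, and there it has two problems.

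First, a logical-economy issue. The whole point of Section~\ref{subsection:5.4} is to produce a \emph{new} proof of Proposition~\ref{proposition:5.1} by the chain Theorem~\ref{theorem:4.17} $\Rightarrow$ Theorem~\ref{theorem:3.10} $\Rightarrow$ Proposition~\ref{proposition:5.2} $\Rightarrow$ Proposition~\ref{proposition:5.1}. If you establish Theorem~\ref{theorem:4.17} by \emph{invoking} Proposition~\ref{proposition:5.1} to know what $T_i$ does on generators, the paper's reproof collapses into a tautology. (Lusztig did not prove his Hall-algebra theorem that way; his proof is a direct, self-contained Hall computation.) Second, and more substantively, your reduction to generators needs $\omega_i:{_i\mathcal{F}}\to{^i\mathcal{F}}$ to be an $\mathcal{A}$-algebra homomorphism, which you take for granted; neither Theorem~\ref{theorem:4.2} nor Proposition~\ref{theorem:4.5} supplies this, and proving it is itself a nontrivial part of the Hall-algebra story (it is essentially equivalent to the theorem). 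Finally, your Step~3 claim that the exponent $-r(-a_{ij}-m+1)$ ``transforms into $-r'(-a_{ij}-m'+1)$ under the reindexing $r\mapsto r'$'' cannot be right as stated: writing $N=-a_{ij}$ and $m'=N-m$, one side's exponents are multiples of $N-m+1=m'+1$ while the other side's are multiples of $N-m'+1=m+1$. These do not match under reindexing alone; the discrepancy has to be exactly absorbed by the dimension twist $v^{-s(\mathbf{V})}$ together with the $v$-powers coming from the fibres of $\alpha$ and $\beta$, and that matching is precisely the computation you have deferred. As written, the proposal outlines the shape of Lusztig's argument but leaves its quantitative heart --- the explicit base-count identity behind $\omega_i$ --- unverified.
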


\begin{proof}[\bf{Proof of Proposition \ref{proposition:3.9}}]

By the properties of ${_i\chi^n}$ and ${^i\chi^n}$ (Theorem III.12.1(4,5) in \cite{Kiehl_Weissauer_Weil_conjectures_perverse_sheaves_and_l'adic_Fourier_transform}), we have the following commutative diagram
$$\xymatrix{\mathcal{D}_{G_{\mathbf{V}}}(_iE_{\mathbf{V}})\ar[r]^-{\tilde{\omega}_i}\ar[d]^-{_i\chi^n}
&\mathcal{D}_{G_{\mathbf{V}'}}(^iE_{\mathbf{V}'})\ar[d]^-{^i\chi^n}\\
_i\underline{\mathcal{F}}^n_\mathbf{V}\ar[r]^-{\omega^n_i}
&^i\underline{\mathcal{F}}^n_{\mathbf{V}'}}$$
Hence, we have
$$\xymatrix@=35pt{\mathcal{D}_{G_{\mathbf{V}}}(_iE_{\mathbf{V}})\ar[r]^-{\tilde{\omega}_i}\ar[d]^-{\prod_{n\in\mathbb{Z}_{\geq1}}{_i\chi}^n}
&\mathcal{D}_{G_{\mathbf{V}'}}(^iE_{\mathbf{V}'})\ar[d]^-{\prod_{n\in\mathbb{Z}_{\geq1}}{^i\chi}^n}\\
\prod_{n\in\mathbb{Z}_{\geq1}}{}_i\underline{\mathcal{F}}^n_\mathbf{V}\ar[r]^-{\prod_{n\in\mathbb{Z}_{\geq1}}\omega^n_i}
&\prod_{n\in\mathbb{Z}_{\geq1}}^i\underline{\mathcal{F}}^n_{\mathbf{V}'}}$$
By Proposition \ref{prop:5.5}, ${_i\chi}^n({_i\mathcal{Q}}_{\mathbf{V}})\subset{_i\mathcal{F}}^{n}_{\mathbf{V}}$. Hence, we have
$$\xymatrix{{_i\mathcal{Q}}_\mathbf{V}\ar[r]^-{\tilde{\omega}_i}\ar[d]^-{_i\chi}
&\mathcal{D}_{G_{\mathbf{V}'}}(^iE_{\mathbf{V}'})\ar[d]^-{\prod_{n\in\mathbb{Z}_{\geq1}}{^i\chi}^n}\\
{_i\mathcal{F}}_\mathbf{V}\ar[r]^-{\omega_i}
&\prod_{n\in\mathbb{Z}_{\geq1}}{^i\underline{\mathcal{F}}}^n_{\mathbf{V}'}}$$
Hence,
$$
(\prod_{n\in\mathbb{Z}_{\geq1}}{^i\chi}^n)\circ\tilde{\omega}_i({_i\mathcal{Q}}_\mathbf{V})\subset\omega_i\circ{_i\chi}({_i\mathcal{Q}}_\mathbf{V}).
$$
Since
$
\omega_i({_i\mathcal{F}}_\mathbf{V})\subset{^i\mathcal{F}}_{\mathbf{V}'},
$
$$
(\prod_{n\in\mathbb{Z}_{\geq1}}{^i\chi}^n)\circ\tilde{\omega}_i({_i\mathcal{Q}}_\mathbf{V})\subset{^i\mathcal{F}}_{\mathbf{V}'}.
$$

For any two semisimple perverse sheaves $\mathcal{L}$ and $\mathcal{L}'$ in $\mathcal{D}_{G_{\mathbf{V}'}}(^iE_{\mathbf{V}'})$ such that
$$(\prod_{n\in\mathbb{Z}_{\geq1}}{^i\chi}^n)(\mathcal{L})=(\prod_{n\in\mathbb{Z}_{\geq1}}{^i\chi}^n)(\mathcal{L}'),$$
$\mathcal{L}$ is isomorphic to $\mathcal{L}'$ by Theorem III.12.1(3) in \cite{Kiehl_Weissauer_Weil_conjectures_perverse_sheaves_and_l'adic_Fourier_transform}.
Since $(\prod_{n\in\mathbb{Z}_{\geq1}}{^i\chi}^n)({^i\mathcal{Q}}_{\mathbf{V}'})={^i\mathcal{F}}_{\mathbf{V}'}$ and the objects in $\tilde{\omega}_i({_i\mathcal{Q}}_\mathbf{V})$ are semisimple,
$
\tilde{\omega}_i({_i\mathcal{Q}}_\mathbf{V})\subset{^i\mathcal{Q}}_{\mathbf{V}'}.
$

\end{proof}

\begin{proposition}\label{theorem:4.18}
We have the following commutative diagram
$$\xymatrix{K({_i\mathcal{Q}})\ar[r]^-{\tilde{\omega}_i}\ar[d]^-{_i\chi}
&K({^i\mathcal{Q}})\ar[d]^-{{^i\chi}}\\
{_i\mathcal{F}}_{\mathcal{A}}\ar[r]^-{\omega_i}
&{^i\mathcal{F}}_{\mathcal{A}}}$$
\end{proposition}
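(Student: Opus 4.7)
The plan is to deduce the commutativity at the Grothendieck-group level from what is already contained in the proof of Proposition \ref{proposition:3.9}, together with an additivity check. The key technical input, coming from the compatibility of the trace map with inverse images and with shifts/twists (Kiehl--Weissauer, Theorem III.12.1(4,5)), is the commutative square
$$\xymatrix{\mathcal{D}_{G_{\mathbf{V}}}(_iE_{\mathbf{V}})\ar[r]^-{\tilde{\omega}_i}\ar[d]^-{_i\chi^n}
&\mathcal{D}_{G_{\mathbf{V}'}}(^iE_{\mathbf{V}'})\ar[d]^-{^i\chi^n}\\
_i\underline{\mathcal{F}}^n_\mathbf{V}\ar[r]^-{\omega^n_i}
&^i\underline{\mathcal{F}}^n_{\mathbf{V}'}}$$
for every $n\geq 1$. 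The pullback part is explained by $\alpha^{\ast}\mathcal{L}=\beta^{\ast}\mathcal{L}'$ forcing $\alpha^{\ast}\chi^n_{\mathcal{L}}=\beta^{\ast}\chi^n_{\mathcal{L}'}$, and the normalising shift-and-twist $[-s(\mathbf{V})](-s(\mathbf{V})/2)$ in $\tilde{\omega}_i$ corresponds under $\chi^n$ to the factor $(p^n)^{-s(\mathbf{V})/2}$ appearing in $\omega_i$ (a half Tate twist $(1/2)$ acts on $\chi^n$ as multiplication by $(p^n)^{-1/2}$ and a shift $[1]$ as multiplication by $-1$, so an even shift produces no sign).

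The first step is to restrict this square to the relevant subcategories and subalgebras. By Proposition \ref{proposition:3.9} the top arrow lands in ${^i\mathcal{Q}}_{\mathbf{V}'}$; by Proposition \ref{prop:5.5} together with its source-analogue for ${^i\chi}^n$, the vertical arrows land in ${_i\mathcal{F}}^n_\mathbf{V}$ and ${^i\mathcal{F}}^n_{\mathbf{V}'}$; and by Lusztig's result the bottom arrow restricts to $\omega^n_i:{_i\mathcal{F}}^n_\mathbf{V}\to{^i\mathcal{F}}^n_{\mathbf{V}'}$. This produces a commutative square whose four corners are the correct finite-level categorification/specialisation of the diagram we want.

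The second step is to assemble these squares over all $n\geq 1$ to get the generic-form diagram. The maps ${_i\chi}$, ${^i\chi}$ and $\omega_i$ are by construction built from the families $\{{_i\chi}^n\}$, $\{{^i\chi}^n\}$ and $\{\omega^n_i\}$, and an element of the generic-form algebra is determined by its collection of specialisations, so commutativity at each level $n$ implies
$$\xymatrix{{_i\mathcal{Q}}_\mathbf{V}\ar[r]^-{\tilde{\omega}_i}\ar[d]^-{_i\chi}
&{^i\mathcal{Q}}_{\mathbf{V}'}\ar[d]^-{^i\chi}\\
{_i\mathcal{F}}_\mathbf{V}\ar[r]^-{\omega_i}
&{^i\mathcal{F}}_{\mathbf{V}'}}$$
commutes on objects; this is essentially what was already observed in the proof of Proposition \ref{proposition:3.9}.

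The final step is to descend to the Grothendieck group. Each of the four maps is additive on direct sums and intertwines the $v^{\pm}$-action, where on the sheaf side $v^{\pm}[\mathcal{L}]=[\mathcal{L}[\pm 1](\pm 1/2)]$ and on the Hall-algebra side $v$ is the formal generic parameter that specialises to $(p^n)^{-1/2}$: $\tilde{\omega}_i$ commutes with shifts and Tate twists tautologically, ${_i\chi}$ and ${^i\chi}$ translate these operations into that specialisation, and $\omega_i$ is $\mathcal{A}$-linear by definition. Passing to $K$ and summing over $\nu\in\mathbb{N}I$ yields Proposition \ref{theorem:4.18}. The one point requiring genuine care is the normalisation match $[-s(\mathbf{V})](-s(\mathbf{V})/2)\leftrightarrow (p^n)^{-s(\mathbf{V})/2}$; once this is verified the rest of the argument is formal bookkeeping.
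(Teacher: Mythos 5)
Your proof follows essentially the same route as the paper's: both rely on the Kiehl--Weissauer compatibility of the trace map $\chi^n$ with $\alpha^\ast$, $\beta^\ast$ and with shift/Tate twist to obtain the commutative square at each finite level $n$, then pass to the generic form and the Grothendieck group. You spell out more explicitly than the paper does the restriction-to-subcategories step (via Propositions~\ref{proposition:3.9} and~\ref{prop:5.5}) and the normalisation match between $[-s(\mathbf{V})](-s(\mathbf{V})/2)$ and the factor $(p^n)^{-s(\mathbf{V})/2}$; the aside that ``an even shift produces no sign'' is not needed (there is no reason $s(\mathbf{V})$ must be even), but this does not affect the argument, since the shift and the twist are handled jointly by the cited Kiehl--Weissauer normalisation.
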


\begin{proof}[\bf{Proof}]

By the properties of ${_i\chi^n}$ and ${^i\chi^n}$, we have the following commutative diagram
$$\xymatrix{{_i\mathcal{Q}}_{\mathbf{V}}\ar[r]^-{\tilde{\omega}_i}\ar[d]^-{_i\chi^n}
&{^i\mathcal{Q}}_{\mathbf{V}'}\ar[d]^-{{^i\chi}^n}\\
{_i\mathcal{F}}^n_\mathbf{V}\ar[r]^-{\omega^n_i}
&{^i\mathcal{F}}^n_{\mathbf{V}'}}$$
Hence, we get the commutative diagram in this proposition.

\end{proof}

At last, Theorem \ref{theorem:4.17} and Proposition \ref{theorem:4.18} imply Theorem \ref{theorem:3.10}.

\section{Projective resolutions of a kind of standard modules}

\subsection{KLR algebras}\label{subsection:5.1}

First let us review the definitions of KLR algebras (\cite{Khovanov_Lauda_A_diagrammatic_approach_to_categorification_of_quantum_groups_I,Varagnolo_Vasserot_Canonical_bases_and_KLR-algebras}).

Let $Q=(I,H,s,t)$ be a quiver corresponding to the Lusztig's algebra $\mathbf{f}$.
Let $\mathbb{K}$ be an algebraic closed field. Fix an $I$-graded $\mathbb{K}$-vector space $\mathbf{V}$ such that $\underline{\dim}\mathbf{V}=\nu\in\mathbb{N}I$.
In Section \ref{subsection:3.1}, the semisimple perverse sheaves $\mathcal{L}_{\mathbf{i}}\in\mathcal{D}_{G_{\mathbf{V}}}(E_{\mathbf{V}})$ are defined for all $\mathbf{i}\in I^{\nu}$.
Let $$\mathcal{L}_{\nu}=\bigoplus_{\mathbf{i}\in I^{\nu}}\mathcal{L}_{\mathbf{i}}.$$
The KLR algebra $\mathbf{R}_{\nu}$ is defined as
$$\mathbf{R}_{\nu}=\bigoplus_{k\in\mathbb{Z}}\textrm{Ext}^{k}_{G_{\mathbf{V}}}(\mathcal{L}_{\nu},\mathcal{L}_{\nu}).$$
$\mathbf{R}_{\nu}$ is a graded algebra and the degree of any element in $\textrm{Ext}^{k}_{G_{\mathbf{V}}}(\mathcal{L}_{\nu},\mathcal{L}_{\nu})$ is $k$.

Let $\mathbf{R}_{\nu}\textrm{-gmod}$ be the category of graded $\mathbf{R}_{\nu}$-modules and $\mathbf{R}_{\nu}\textrm{-proj}$ be the category of finitely generated graded projective $\mathbf{R}_{\nu}$-modules. Let $K(\mathbf{R}_{\nu}\textrm{-proj})$ be the Grothendieck group of $\mathbf{R}_{\nu}\textrm{-proj}$.

Define $v^{\pm}[P]=[P[\pm1]]$. So $K(\mathbf{R}_{\nu}\textrm{-proj})$ is a free $\mathcal{A}$-module. Define $$K(\mathbf{R}\textrm{-proj})=\bigoplus_{\nu\in\mathbb{N}I}K(\mathbf{R}_{\nu}\textrm{-proj}).$$

For $\nu,\nu',\nu''\in\mathbb{N}I$ such that $\nu=\nu'+\nu''$ and three $I$-graded $\mathbb{K}$-vector spaces $\mathbf{V}$, $\mathbf{V}'$, $\mathbf{V}''$ such that $\underline{\dim}\mathbf{V}=\nu$, $\underline{\dim}\mathbf{V}'=\nu'$, $\underline{\dim}\mathbf{V}''=\nu''$, Khovanov and Lauda (\cite{Khovanov_Lauda_A_diagrammatic_approach_to_categorification_of_quantum_groups_I}) defined a functor
$$\textrm{Ind}_{\nu',\nu''}:\mathbf{R}_{\nu'}\textrm{-proj}\times\mathbf{R}_{\nu''}\textrm{-proj}\rightarrow\mathbf{R}_{\nu}\textrm{-proj},$$
which induces an $\mathcal{A}$-bilinear multiplication
$$[\textrm{Ind}_{\nu',\nu''}]:K(\mathbf{R}_{\nu'}\textrm{-proj})\otimes_{\mathcal{A}}K(\mathbf{R}_{\nu''}\textrm{-proj})\rightarrow K(\mathbf{R}_{\nu}\textrm{-proj}).$$
Khovanov and Lauda (\cite{Khovanov_Lauda_A_diagrammatic_approach_to_categorification_of_quantum_groups_I}) proved that
$K(\mathbf{R}\textrm{-proj})$ becomes an associative $\mathcal{A}$-algebra.

For any $\mathbf{y}\in Y_{\nu}$, let $$P_{\mathbf{y}}=\bigoplus_{k\in\mathbb{Z}}\textrm{Ext}^{k}_{G_{\mathbf{V}}}(\mathcal{L}_{\mathbf{y}},\mathcal{L}_{\nu}).$$

\begin{theorem}[\cite{Khovanov_Lauda_A_diagrammatic_approach_to_categorification_of_quantum_groups_I,Rouquier_2-Kac-Moody_algebras}]
There is a unique isomorphism of  $\mathcal{A}$-algebras
$$\gamma_{\mathcal{A}}:\mathbf{f}_{\mathcal{A}}\rightarrow K(\mathbf{R}\textrm{-proj})$$
such that $\gamma_{\mathcal{A}}(\theta_{\mathbf{y}})=P_{\mathbf{y}}$ for all $\mathbf{y}\in Y_{\nu}$.
\end{theorem}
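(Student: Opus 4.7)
The strategy is to compare $\mathcal{Q}_{\mathbf{V}}$ with $\mathbf{R}_{\nu}\textrm{-proj}$ via a functor induced by $\mathrm{Ext}^{*}(-,\mathcal{L}_{\nu})$, and then pass to Grothendieck groups. For each $\nu\in\mathbb{N}I$, I would define
$$\Phi_{\nu}:\mathcal{Q}_{\mathbf{V}}\longrightarrow\mathbf{R}_{\nu}\textrm{-proj},\qquad \mathcal{L}\longmapsto\bigoplus_{k\in\mathbb{Z}}\mathrm{Ext}^{k}_{G_{\mathbf{V}}}(\mathcal{L},\mathcal{L}_{\nu}),$$
endowed with the graded left $\mathbf{R}_{\nu}$-module structure coming from composition. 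By construction $\Phi_{\nu}(\mathcal{L}_{\mathbf{y}})=P_{\mathbf{y}}$. Moreover $\mathcal{L}_{\mathbf{y}}$ is a shifted direct summand of $\mathcal{L}_{\tilde{\mathbf{i}}}$ for a suitable $\tilde{\mathbf{i}}\in I^{\nu}$ (obtained by expanding $\mathbf{y}=(\mathbf{i},\mathbf{a})$), so $P_{\mathbf{y}}$ is a direct summand of $\mathbf{R}_{\nu}$ itself and hence projective. The $v$-actions on both sides are intertwined by $\Phi_{\nu}$: a cohomological shift on the source of $\mathrm{Ext}$ translates into a grading shift on the module side, and the Tate half-twist is absorbed in the normalization.

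Next, I would prove that each $\Phi_{\nu}$ is an equivalence of additive categories. This is a Morita-type statement: by the very definition of $\mathcal{P}_{\mathbf{V}}$, every simple object occurs as a shifted summand of some $\mathcal{L}_{\mathbf{i}}$, so $\mathcal{L}_{\nu}=\bigoplus_{\mathbf{i}\in I^{\nu}}\mathcal{L}_{\mathbf{i}}$ is a projective generator of the Karoubi-closed additive category $\mathcal{Q}_{\mathbf{V}}$, whose graded endomorphism algebra is $\mathbf{R}_{\nu}$ by definition. This is precisely the geometric realization of KLR algebras established by Varagnolo--Vasserot and Rouquier, and the Yoneda-type functor $\Phi_{\nu}$ is then an equivalence onto the category of finitely generated graded projective $\mathbf{R}_{\nu}$-modules.

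The decisive step is to check that $\Phi_{\nu}$ is monoidal, i.e. that Lusztig's convolution $\ast$ on $\mathcal{Q}$ corresponds to Khovanov--Lauda's induction $\mathrm{Ind}_{\nu',\nu''}$. One computes $\mathrm{Ext}^{*}_{G_{\mathbf{V}}}(\mathcal{L}'\ast\mathcal{L}'',\mathcal{L}_{\nu})$ by pulling back along the flag-variety correspondence defining $\ast$, applying proper base change, and identifying the result with an induced module of the form $\mathbf{R}_{\nu}\otimes_{\mathbf{R}_{\nu',\nu''}}\bigl(\Phi_{\nu'}(\mathcal{L}')\boxtimes\Phi_{\nu''}(\mathcal{L}'')\bigr)$, in such a way that the shift $m_{\nu'\nu''}=\sum_{\rho\in H}\nu'_{s(\rho)}\nu''_{t(\rho)}-\sum_{i\in I}\nu'_{i}\nu''_{i}$ appearing in the definition of $\circledast$ matches exactly the shift built into $\mathrm{Ind}_{\nu',\nu''}$. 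Once this is in hand, $[\Phi]=\bigoplus_{\nu}[\Phi_{\nu}]:K(\mathcal{Q})\to K(\mathbf{R}\textrm{-proj})$ is an isomorphism of $\mathcal{A}$-algebras, and defining $\gamma_{\mathcal{A}}=[\Phi]\circ\lambda_{\mathcal{A}}^{-1}$ via Theorem \ref{theorem:3.1} gives a map with $\gamma_{\mathcal{A}}(\theta_{\mathbf{y}})=[P_{\mathbf{y}}]$; uniqueness is immediate because the generators $\theta_{i}$ (a fortiori the $\theta_{\mathbf{y}}$) already generate $\mathbf{f}_{\mathcal{A}}$ as an $\mathcal{A}$-algebra. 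The principal obstacle is precisely this monoidality verification: it requires a careful geometric base-change computation and a tight bookkeeping of cohomological shifts and Tate twists, which is where the weight of the proof lies.
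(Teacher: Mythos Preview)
The paper does not supply its own proof of this theorem; it is quoted from the cited references, so there is no in-paper argument to compare against. Your sketch is a faithful outline of the geometric approach in the spirit of Varagnolo--Vasserot and Rouquier (whose Ext-algebra description of $\mathbf{R}_{\nu}$ the paper has adopted as its definition): establish that $\Phi_{\nu}=\bigoplus_{k}\textrm{Ext}^{k}_{G_{\mathbf{V}}}(-,\mathcal{L}_{\nu})$ is an additive equivalence $\mathcal{Q}_{\mathbf{V}}\simeq\mathbf{R}_{\nu}\textrm{-proj}$ via the projective-generator/Morita argument, verify compatibility of Lusztig's $\ast$ with $\textrm{Ind}_{\nu',\nu''}$, and then transport $\lambda_{\mathcal{A}}$ across. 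The monoidality check you flag as the principal obstacle is indeed where the substance lies, and your identification of the shift $m_{\nu'\nu''}$ as the quantity to match is correct. There is no gap in the strategy; the proof simply resides in the cited literature rather than in this paper.
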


Let $\mathbf{B}_{\mathbb{Z}}=\{v^sb\,\,|\,\,b\in\mathbf{B}, s\in\mathbb{Z}\}$, which is a $\mathbb{Z}$-basis of $\mathbf{f}_{\mathcal{A}}$.
Varagnolo, Vasserot and Rouquier proved the following theorem.

\begin{theorem}[\cite{Varagnolo_Vasserot_Canonical_bases_and_KLR-algebras,Rouquier_Quiver_Hecke_algebras_and_2-Lie_algebras}]
The map $\gamma_{\mathcal{A}}$ takes $\mathbf{B}_{\mathbb{Z}}$ to the $\mathbb{Z}$-basis of $K(\mathbf{R}\textrm{-proj})$ consisting of all indecomposable projective modules.
\end{theorem}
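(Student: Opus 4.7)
The plan is to exploit the geometric description $\mathbf{R}_{\nu}=\bigoplus_{k}\mathrm{Ext}^{k}_{G_{\mathbf{V}}}(\mathcal{L}_{\nu},\mathcal{L}_{\nu})$ and apply the decomposition theorem to $\mathcal{L}_{\nu}=\bigoplus_{\mathbf{i}\in I^{\nu}}\mathcal{L}_{\mathbf{i}}$. By definition of $\mathcal{P}_{\mathbf{V}}$, every indecomposable summand of $\mathcal{L}_{\nu}$ in $\mathcal{D}_{G_{\mathbf{V}}}(E_{\mathbf{V}})$ is of the form $\mathcal{L}[r]$ with $\mathcal{L}\in\mathcal{P}_{\mathbf{V}}$ and $r\in\mathbb{Z}$, and conversely each $\mathcal{L}\in\mathcal{P}_{\mathbf{V}}$ appears (with some multiplicity space) as a summand of some $\mathcal{L}_{\mathbf{i}}$. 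So I would begin by writing $\mathcal{L}_{\nu}\cong\bigoplus_{\mathcal{L}\in\mathcal{P}_{\mathbf{V}}}\mathcal{L}\otimes M_{\mathcal{L}}$ for graded multiplicity vector spaces $M_{\mathcal{L}}$.

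Next I would introduce the functor $\Phi:\mathcal{Q}_{\mathbf{V}}\to \mathbf{R}_{\nu}\textrm{-proj}$ defined by $\mathcal{M}\mapsto\bigoplus_{k}\mathrm{Ext}^{k}_{G_{\mathbf{V}}}(\mathcal{L}_{\nu},\mathcal{M})$, and verify it is an additive equivalence. Full faithfulness is built into the definition of $\mathbf{R}_{\nu}$ (via the graded Yoneda lemma applied to the projective generator $\mathcal{L}_{\nu}$ of the additive category it generates), while essential surjectivity follows because $\Phi(\mathcal{L}_{\nu})=\mathbf{R}_{\nu}$ (the regular module) and the closure of the essential image under direct sums and summands exhausts all finitely generated graded projectives. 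Note also that $\Phi$ intertwines the cohomological shift $[1]$ with the grading shift on modules, so it is $\mathcal{A}$-linear on Grothendieck groups with respect to the conventions $v^{\pm}[\mathcal{L}]=[\mathcal{L}[\pm 1](\pm\tfrac{1}{2})]$ and $v^{\pm}[P]=[P[\pm 1]]$ declared in the paper.

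Since an equivalence of additive Krull--Schmidt categories sends indecomposables bijectively to indecomposables, the functor $\Phi$ takes each simple perverse sheaf $\mathcal{L}\in\mathcal{P}_{\mathbf{V}}$ to an indecomposable projective module $P_{\mathcal{L}}=\bigoplus_{k}\mathrm{Ext}^{k}_{G_{\mathbf{V}}}(\mathcal{L}_{\nu},\mathcal{L})$, and the set $\{P_{\mathcal{L}}[s]\mid\mathcal{L}\in\mathcal{P}_{\mathbf{V}},\,s\in\mathbb{Z}\}$ is exactly a complete set of grading-shifted indecomposable projectives, giving the $\mathbb{Z}$-basis of $K(\mathbf{R}_{\nu}\textrm{-proj})$. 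Passing to Grothendieck groups, $\Phi$ therefore induces an $\mathcal{A}$-algebra isomorphism $K(\mathcal{Q}_{\mathbf{V}})\xrightarrow{\sim} K(\mathbf{R}_{\nu}\textrm{-proj})$ sending $[\mathcal{L}_{\mathbf{y}}]\mapsto[P_{\mathbf{y}}]$, and the composition with Lusztig's isomorphism $\lambda_{\mathcal{A}}^{-1}:\mathbf{f}_{\mathcal{A}}\to K(\mathcal{Q})$ of Theorem \ref{theorem:3.1} coincides with $\gamma_{\mathcal{A}}$ by the matching normalization on $\theta_{\mathbf{y}}$.

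Finally, recall from Section \ref{subsection:3.1} that $\mathbf{B}_{\nu}=\{\lambda_{\mathcal{A}}([\mathcal{L}])\mid\mathcal{L}\in\mathcal{P}_{\mathbf{V}}\}$, so $\mathbf{B}_{\mathbb{Z}}$ is precisely the $\lambda_{\mathcal{A}}$-image of the shifted classes $\{[\mathcal{L}[s]]\mid\mathcal{L}\in\mathcal{P}_{\mathbf{V}},\,s\in\mathbb{Z}\}$; applying $\gamma_{\mathcal{A}}=\Phi_{*}\circ\lambda_{\mathcal{A}}^{-1}$ identifies this with the complete set of grading-shifted indecomposable projectives, which is the claim. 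The principal obstacle is establishing the additive equivalence $\Phi$ rigorously in the equivariant derived category: one needs that $\mathcal{L}_{\nu}$ really is a projective generator of the additive closure $\mathcal{Q}_{\mathbf{V}}$, which in turn hinges on the formality-type statement that $\bigoplus_{k}\mathrm{Ext}^{k}_{G_{\mathbf{V}}}(\mathcal{L}_{\nu},-)$ detects isomorphisms between semisimple complexes and commutes with the Krull--Schmidt decomposition provided by the decomposition theorem. This is exactly the content of the Varagnolo--Vasserot and Rouquier theorems cited.
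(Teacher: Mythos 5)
The paper does not prove this statement at all; it is quoted verbatim from Varagnolo--Vasserot and Rouquier and used as external input, so there is no ``paper's own proof'' to compare against. That said, your outline is essentially the standard Morita-theoretic picture one extracts once $\mathbf{R}_{\nu}$ is \emph{defined} as $\bigoplus_{k}\operatorname{Ext}^{k}_{G_{\mathbf{V}}}(\mathcal{L}_{\nu},\mathcal{L}_{\nu})$, as this paper does: $\mathcal{L}_{\nu}$ is an additive generator of $\mathcal{Q}_{\mathbf{V}}$ up to shifts, so the representable functor is fully faithful on the additive hull of $\{\mathcal{L}_{\nu}[k]\}$ by abstract nonsense, extends across the Karoubi completion, and hence matches $\mathcal{P}_{\mathbf{V}}$-indecomposables with indecomposable graded projectives; combining with $\mathbf{B}_{\nu}=\lambda_{\mathcal{A}}(\mathcal{P}_{\mathbf{V}})$ and the characterization $\gamma_{\mathcal{A}}(\theta_{\mathbf{y}})=[P_{\mathbf{y}}]$ then gives the claim. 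Two corrections, though. First, you locate the ``principal obstacle'' in a ``formality-type statement'' needed to make $\Phi$ an equivalence; this is a red herring. Full faithfulness of $\operatorname{Ext}^{\bullet}(\mathcal{L}_{\nu},-)$ on semisimple complexes does not require formality, only that $\mathcal{L}_{\nu}$ additively generates $\mathcal{Q}_{\mathbf{V}}$ (which is immediate from the definition of $\mathcal{P}_{\mathbf{V}}$). The genuinely hard content of Varagnolo--Vasserot and Rouquier is elsewhere: identifying this geometric Ext-algebra with the diagrammatically presented KLR algebra of Khovanov--Lauda, which is what makes the Khovanov--Lauda isomorphism $\gamma_{\mathcal{A}}$ even available in the geometric incarnation used here. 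The paper's choice to define $\mathbf{R}_{\nu}$ geometrically silently absorbs that theorem. Second, there is a sides issue: with your definition $\Phi(\mathcal{M})=\bigoplus_{k}\operatorname{Ext}^{k}(\mathcal{L}_{\nu},\mathcal{M})$, precomposition gives a \emph{right} $\mathbf{R}_{\nu}$-module, whereas the paper's $P_{\mathbf{y}}=\bigoplus_{k}\operatorname{Ext}^{k}(\mathcal{L}_{\mathbf{y}},\mathcal{L}_{\nu})$ is a \emph{left} module; you should use the contravariant functor $\mathcal{M}\mapsto\bigoplus_{k}\operatorname{Ext}^{k}(\mathcal{M},\mathcal{L}_{\nu})$ (or invoke Verdier duality) so that $\Phi(\mathcal{L}_{\mathbf{y}})=P_{\mathbf{y}}$ on the nose and the identity $\gamma_{\mathcal{A}}=\Phi_{\ast}\circ\lambda_{\mathcal{A}}'$ goes through as you intend.
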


\subsection{Projective resolutions}\label{subsection:5.2}

Let $i$ and $j$ be two vertices of the quiver $Q$ such that there are no arrows from $i$ to $j$.
Let $N=\#\{j\rightarrow i\}$ and $m$ be a non-negative integer such that $m\leq N$.
Let $\nu^{(m)}=mi+j\in\mathbb{N}I$. Fix an $I$-graded $\mathbb{K}$-vector space $\mathbf{V}^{(m)}$ such that $\underline{\dim}\mathbf{V}^{(m)}=\nu^{(m)}$.

Denote by $\mathbf{1}_{_iE_{\mathbf{V}^{(m)}}}\in\mathcal{D}_{G_{\mathbf{V}^{(m)}}}(_iE_{\mathbf{V}^{(m)}})$ the constant sheaf on $_iE_{\mathbf{V}^{(m)}}$.
The following functor is defined in Section \ref{subsection:3.2}:
$$j_{\mathbf{V}^{(m)}!}:\mathcal{D}_{G_{\mathbf{V}^{(m)}}}(_iE_{\mathbf{V}^{(m)}})\rightarrow \mathcal{D}_{G_{\mathbf{V}^{(m)}}}(E_{\mathbf{V}^{(m)}}).$$
Define
$$\mathcal{E}^{(m)}=j_{\mathbf{V}^{(m)}!}(v^{-mN}\mathbf{1}_{_iE_{\mathbf{V}^{(m)}}})\in\mathcal{D}_{G_{\mathbf{V}^{(m)}}}(E_{\mathbf{V}^{(m)}})$$
and
$$K_m=\bigoplus_{k\in\mathbb{Z}}\textrm{Ext}^{k}_{G_{\mathbf{V}^{(m)}}}(\mathcal{E}^{(m)},\mathcal{L}_{\nu^{(m)}}).$$
$K_m$ is an object in $\mathbf{R}_{\nu^{(m)}}\textrm{-gmod}$ for any $m$. Note that $K_m$ is a standard module in the sense of Kato (\cite{Kato_PBW_bases_and_KLR_algebras}). We shall give projective resolutions of these standard modules.
For convenience, the complex $j_{\mathbf{V}^{(m)}!}(\mathbf{1}_{_iE_{\mathbf{V}^{(m)}}})\in\mathcal{D}_{G_{\mathbf{V}^{(m)}}}(E_{\mathbf{V}^{(m)}})$ is also denoted by
$\mathbf{1}_{_iE_{\mathbf{V}^{(m)}}}$.

For each $m\geq p\in\mathbb{N}$, consider the following variety
$$\tilde{S}^{(m)}_p=\{(x,W)\,\,|\,\,x\in E_{\mathbf{V}^{(m)}},\,W\subset V_i,\,\dim(W)=p,\,\textrm{Im}\bigoplus_{h\in H,t(h)=i}x_{h}\subset W\}.$$
Let $\pi_p:\tilde{S}^{(m)}_p\rightarrow E_{\mathbf{V}^{(m)}}$ be the projection taking $(x,W)$ to $x$ and $S^{(m)}_p=\textrm{Im}\pi_p$.

By the definitions of $S^{(m)}_p$, we have
$${E_{\mathbf{V}^{(m)}}}=S^{(m)}_m\supset{S^{(m)}_{m-1}}\supset{S^{(m)}_{m-2}}\supset\cdots\supset{S^{(m)}_0}.$$
For each $1\leq p\leq m$, let $$\mathcal{N}^{(m)}_{p}={S^{(m)}_{p}}\backslash{S^{(m)}_{p-1}}.$$
Denote by $i^{(m)}_p:{S^{(m)}_{p-1}}\rightarrow{S^{(m)}_{p}}$ the close embedding and $j^{(m)}_p:\mathcal{N}^{(m)}_{p}\rightarrow{S^{(m)}_{p}}$ the open embedding.

Define
$$I^{(m)}_p={(\pi_p)}_!(\mathbf{1}_{\tilde{S}^{(m)}_p})[\dim\tilde{S}^{(m)}_p].$$
In \cite{Lusztig_Quivers_perverse_sheaves_and_the_quantized_enveloping_algebras}, Lusztig proved that $I^{(m)}_p$ are semisimple perverse sheaves in $\mathcal{D}_{G_{\mathbf{V}^{(m)}}}(E_{\mathbf{V}^{(m)}})$. Hence $I^{(m)}_p$ correspond to projective modules in $\mathbf{R}_{\nu}\textrm{-proj}$.

The following theorem is the main result in this section.

\begin{theorem}\label{theorem:5.4}
For $\mathcal{E}^{(m)}$, there exists $s_{m}\in\mathbb{N}$. For each $s_m\geq p\in\mathbb{N}$, there exists $\mathcal{E}_p^{(m)}\in\mathcal{D}_{G_{\mathbf{V}^{(m)}}}(E_{\mathbf{V}^{(m)}})$ such that
\begin{enumerate}
\item[(1)]$\mathcal{E}_{s_m}^{(m)}=\mathcal{E}^{(m)}$ and $\mathcal{E}_{0}^{(m)}$ is the direct sum of some semisimple perverse sheaves of the form $I^{(m)}_{p'}[l]$;
\item[(2)]for each $p\geq1$, there exists a  distinguished triangle
   \begin{displaymath}
       \xymatrix{
      \mathcal{E}^{(m)}_{p}\ar[r]&{\mathcal{G}^{(m)}_{p}}\ar[r]&\mathcal{E}^{(m)}_{p-1}\ar[r]&,
      }
      \end{displaymath}
      where $\mathcal{G}^{(m)}_{p}$ is the direct sum of some semisimple perverse sheaves of the form $I^{(m)}_{p'}[l]$.
\end{enumerate}
\end{theorem}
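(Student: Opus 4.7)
The strategy is to construct the sequence $(\mathcal E_p^{(m)}, \mathcal G_p^{(m)})$ by induction along the closed filtration
$$E_{\mathbf V^{(m)}}=S_m^{(m)}\supset S_{m-1}^{(m)}\supset\cdots\supset S_0^{(m)},$$
taking $s_m=m$ and peeling off one stratum at each step. The main geometric input is that $\pi_p:\tilde S_p^{(m)}\to S_p^{(m)}$ is a Grassmannian bundle stratum-by-stratum: over $\mathcal N_q^{(m)}$ (with $q\le p$) the fibre is $Gr(p-q,\,m-q)$, of dimension $(p-q)(m-p)$. In particular $\pi_p$ is an isomorphism over the open stratum $\mathcal N_p^{(m)}$, so
$$j_p^*\bigl(I_p^{(m)}\bigr)=\mathbf 1_{\mathcal N_p^{(m)}}[\dim\tilde S_p^{(m)}];$$
for $q<p$, proper base change identifies $I_p^{(m)}|_{\mathcal N_q^{(m)}}$ with a direct sum of shifts of $\mathbf 1_{\mathcal N_q^{(m)}}$ whose graded multiplicities are encoded by the quantum binomial $\binom{m-q}{p-q}_v$.

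Start by setting $\mathcal E_m^{(m)}=\mathcal E^{(m)}$. The open-closed distinguished triangle associated to the decomposition $\mathcal N_m^{(m)}={_iE_{\mathbf V^{(m)}}}\hookrightarrow E_{\mathbf V^{(m)}}\hookleftarrow S_{m-1}^{(m)}$, applied to the shift-twist of $I_m^{(m)}$ whose top-stratum restriction equals $v^{-mN}\mathbf 1_{_iE_{\mathbf V^{(m)}}}$, rotates to
$$\mathcal E_m^{(m)}\longrightarrow\mathcal G_m^{(m)}\longrightarrow\mathcal E_{m-1}^{(m)}\overset{+1}{\longrightarrow},$$
with $\mathcal G_m^{(m)}$ that shift-twist of $I_m^{(m)}$ and $\mathcal E_{m-1}^{(m)}$ supported on $S_{m-1}^{(m)}$.

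Inductively, suppose $\mathcal E_p^{(m)}$ has been constructed, is supported on $S_p^{(m)}$, and comes with an explicit description of $j_p^*(\mathcal E_p^{(m)})$ as a single shift-twist of $\mathbf 1_{\mathcal N_p^{(m)}}$. Choose $\mathcal G_p^{(m)}$ to be the corresponding shift-twist of $I_p^{(m)}$ so that $j_p^*(\mathcal G_p^{(m)})\simeq j_p^*(\mathcal E_p^{(m)})$; adjunction then yields a morphism $\mathcal E_p^{(m)}\to\mathcal G_p^{(m)}$ whose cone, vanishing after restriction to $\mathcal N_p^{(m)}$, is supported on $S_{p-1}^{(m)}$. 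Take this cone as $\mathcal E_{p-1}^{(m)}$. After $m$ such steps $\mathcal E_0^{(m)}$ is supported on $\mathcal N_0^{(m)}=S_0^{(m)}$ and is itself a direct sum of shifts of $I_0^{(m)}$, which closes the construction.

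The main obstacle lies in the combinatorial bookkeeping at each inductive step: one must check that $j_p^*(\mathcal E_p^{(m)})$ really is a \emph{single} shift-twist of $\mathbf 1_{\mathcal N_p^{(m)}}$ rather than a nontrivial graded sum, so that $\mathcal G_p^{(m)}$ may indeed be realised by one shift-twist of $I_p^{(m)}$. Tracking the residue contributions coming from each earlier $i_{p+k}^*(I_{p+k}^{(m)})|_{\mathcal N_p^{(m)}}$ through the Grassmannian-fibre computation, this reduces to a quantum-binomial cancellation that is precisely the categorified shadow of Lusztig's identity $T_i(f(i,j;m))=f'(i,j;N-m)$; verifying this identity together with the correct accounting of the $v^{-mN}$ twist against the Grassmannian-fibre shifts is the heart of the argument.
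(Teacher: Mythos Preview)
Your approach is genuinely different from the paper's, and there is a real gap in the step where you produce the map $\mathcal{E}_p^{(m)}\to\mathcal{G}_p^{(m)}$.

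The paper does \emph{not} peel off the strata $S_p^{(m)}$ directly with $s_m=m$. Instead it argues by induction on $m$. The geometric core is Proposition~5.3: starting from $I_m^{(m)}\simeq v^{-mN}\mathbf 1_{E_{\mathbf V^{(m)}}}$, one repeatedly applies the open--closed triangle to the \emph{constant sheaves} $\mathbf 1_{S_p^{(m)}}$, and observes that each $\mathbf 1_{S_p^{(m)}}$ factors through the induction product as $\mathcal L_{(m-p)i}\circledast(\text{constant sheaf on }E_{\mathbf V^{(p)}})$. The graded pieces are therefore $v^{a_p^{(m)}}\bigl(\mathcal L_{(m-p)i}\circledast\mathbf 1_{_iE_{\mathbf V^{(p)}}}\bigr)$, not shifts of $I_p^{(m)}$. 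By the inductive hypothesis each $\mathbf 1_{_iE_{\mathbf V^{(p)}}}$ (for $p<m$) already has Property~\textbf{A}$(p)$, and applying $\mathcal L_{(m-p)i}\circledast(-)$ transports this to Property~\textbf{A}$(m)$. An octahedral-axiom lemma (Lemma~5.2) then glues these pieces together; the resulting $s_m$ is not $m$ but grows via $s_{\mathcal A'}=s_{\mathcal A}+s_{\mathcal A''}+1$. The point is that at every stage the morphisms come for free, either from the open--closed triangle applied to a genuine constant sheaf, or from the functoriality of $\circledast$.

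In your scheme, by contrast, after the very first step $\mathcal{E}_{m-1}^{(m)}$ is no longer of the form $(j_p)_!(\,\cdot\,)$; it is an iterated cone. The sentence ``adjunction then yields a morphism $\mathcal{E}_p^{(m)}\to\mathcal{G}_p^{(m)}$'' does not go through: from an isomorphism $j_p^*\mathcal{E}_p^{(m)}\simeq j_p^*\mathcal{G}_p^{(m)}$ adjunction produces only maps $(j_p)_!j_p^*\mathcal{E}_p^{(m)}\to\mathcal{G}_p^{(m)}$ or $\mathcal{E}_p^{(m)}\to(j_p)_*j_p^*\mathcal{G}_p^{(m)}$, and neither is the map you need once $\mathcal{E}_p^{(m)}\neq(j_p)_!j_p^*\mathcal{E}_p^{(m)}$. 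Concretely, already for $\mathcal{E}_{m-2}^{(m)}=\mathrm{cone}\bigl(\mathbf 1_{S_{m-1}^{(m)}}\to(\pi_{m-1})_*\mathbf 1\bigr)$ there is no evident map to a shift of $I_{m-2}^{(m)}$; one would have to exhibit it by hand and check it restricts to an isomorphism on $\mathcal N_{m-2}^{(m)}$.

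A second, smaller issue: the ``single shift'' claim you flag as the main obstacle is a statement about complexes, not just their classes in the Grothendieck group. Your Grassmannian-fibre bookkeeping (which, incidentally, does give a clean $q$-binomial cancellation in $K$-theory) does not by itself show that $j_p^*\mathcal{E}_p^{(m)}$ is isomorphic, as an object of the derived category, to a single shift of $\mathbf 1_{\mathcal N_p^{(m)}}$. The paper's route avoids both difficulties by never leaving the world of constant sheaves and their $\circledast$-products until the very end.
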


The proof of Theorem \ref{theorem:5.4} will be given in Section \ref{subsection:5.3}.

Let
$${P^{(m)}_{0}}=\bigoplus_{k\in\mathbb{Z}}\textrm{Ext}^{k}_{G_{\mathbf{V}^{(m)}}}(\mathcal{E}_{0}^{(m)},\mathcal{L}_{\nu^{(m)}})$$
and
$${P^{(m)}_{s}}=\bigoplus_{k\in\mathbb{Z}}\textrm{Ext}^{k}_{G_{\mathbf{V}^{(m)}}}(\mathcal{G}^{(m)}_{p},\mathcal{L}_{\nu^{(m)}})\,\,\,\,\,(1\leq s\leq m),$$
which are projective modules in $\mathbf{R}_{\nu^{(m)}}\textrm{-proj}$.

As a corollary of Theorem \ref{theorem:5.4}, we have the following theorem.

\begin{theorem}\label{theorem:5.5}
For any $N\geq m\in\mathbb{N}$, there exists a finite length projective resolution of $K_m$:
\begin{displaymath}
\xymatrix{
0\ar[r]&{P^{(m)}_{0}}\ar[r]&{P^{(m)}_{1}}\ar[r]&\cdots\ar[r]&{P^{(m)}_{s_m-1}}\ar[r]&{P^{(m)}_{s_m}}\ar[r]&K_m\ar[r]&0.
}
\end{displaymath}
\end{theorem}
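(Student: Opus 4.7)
The strategy is to apply the cohomological functor $F=\bigoplus_{k\in\mathbb{Z}}\mathrm{Ext}^k_{G_{\mathbf{V}^{(m)}}}(-,\mathcal{L}_{\nu^{(m)}})$ to the tower of distinguished triangles supplied by Theorem~\ref{theorem:5.4} and to assemble the output into the desired resolution. First I would check that each $P_s^{(m)}$ lies in $\mathbf{R}_{\nu^{(m)}}\textrm{-proj}$: by Lusztig's theorem recalled just before Theorem~\ref{theorem:5.4}, each $I_{p'}^{(m)}$ lies in $\mathcal{Q}_{\mathbf{V}^{(m)}}$, so any shift $I_{p'}^{(m)}[l]$ is a direct summand of $\mathcal{L}_{\mathbf{i}}[r]$ for some $\mathbf{i}\in I^{\nu^{(m)}}$; applying $F$ exhibits $P_0^{(m)}=F(\mathcal{E}_0^{(m)})$ and each $P_s^{(m)}=F(\mathcal{G}_s^{(m)})$ as direct summands of the projective modules $P_{\mathbf{i}}$, hence as projective.

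Next I would invoke the triangulated equivalence $\langle\mathcal{Q}_{\mathbf{V}^{(m)}}\rangle_{\Delta}\simeq K^b(\mathbf{R}_{\nu^{(m)}}\textrm{-proj})$ coming from Varagnolo--Vasserot and Rouquier (together with the formality of $\mathrm{End}^{\bullet}(\mathcal{L}_{\nu^{(m)}})=\mathbf{R}_{\nu^{(m)}}$). Under this equivalence each triangle $\mathcal{E}^{(m)}_{p}\to\mathcal{G}^{(m)}_{p}\to\mathcal{E}^{(m)}_{p-1}\xrightarrow{+1}$ becomes a triangle $F(\mathcal{E}^{(m)}_{p-1})\to P_{p}^{(m)}\to F(\mathcal{E}^{(m)}_{p})\xrightarrow{+1}$ in $K^b(\mathbf{R}_{\nu^{(m)}}\textrm{-proj})$. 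Starting from $F(\mathcal{E}^{(m)}_0)=P_0^{(m)}$ (a module placed in cohomological degree $0$) and iterating the cone construction, an induction on $p$ represents $F(\mathcal{E}^{(m)}_{p})$ by the complex
$$C_p^{\bullet}=\bigl[\,P_0^{(m)}\to P_1^{(m)}\to\cdots\to P_p^{(m)}\,\bigr]$$
placed in cohomological degrees $[-p,0]$, whose differentials are the connecting morphisms of successive triangles; consecutive ones compose to zero by the triangle axiom. Setting $p=s_m$ yields a quasi-isomorphism between $K_m=F(\mathcal{E}^{(m)})$ and $C_{s_m}^{\bullet}$ in $D^{b}(\mathbf{R}_{\nu^{(m)}}\textrm{-gmod})$.

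Since $K_m$ is by construction a module (an object concentrated in cohomological degree $0$), this quasi-isomorphism forces $H^i(C_{s_m}^{\bullet})=0$ for $i<0$ and $H^0(C_{s_m}^{\bullet})=K_m$, which is precisely the exactness of
$$0\to P_0^{(m)}\to P_1^{(m)}\to\cdots\to P_{s_m}^{(m)}\to K_m\to 0$$
claimed by the theorem. The principal obstacle I anticipate is the passage from the naive cohomological statement ``$F$ sends triangles to long exact sequences'' to a genuine triangulated identification at the chain level, so that the iterated cones produce an honest complex of projectives rather than merely a spectral sequence converging to $K_m$; this is exactly what the Varagnolo--Vasserot--Rouquier equivalence (together with formality of $\mathbf{R}_{\nu^{(m)}}$) delivers. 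Once that machinery is invoked, the remaining steps are formal manipulations inside $D^b(\mathbf{R}_{\nu^{(m)}}\textrm{-gmod})$.
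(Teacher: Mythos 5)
Your proposal is correct and follows the route that the paper's silent \qed after Theorem~\ref{theorem:5.5} presupposes: push the tower of distinguished triangles from Theorem~\ref{theorem:5.4} through the Varagnolo--Vasserot/Rouquier identification of $\mathcal{Q}_{\mathbf{V}^{(m)}}$-objects with objects of $\mathbf{R}_{\nu^{(m)}}\textrm{-proj}$, and realise $\Phi(\mathcal{E}^{(m)})$ as the iterated cone $[P^{(m)}_{0}\to\cdots\to P^{(m)}_{s_m}]$ in $K^{b}(\mathbf{R}_{\nu^{(m)}}\textrm{-proj})$. The one step you label ``by construction'' — that the image of $\mathcal{E}^{(m)}$ under the equivalence is concentrated in cohomological degree $0$, so that this complex has cohomology only in degree $0$ and that cohomology is $K_m$ — is precisely the statement that $K_m$ is a standard module, which the paper (and your proposal, implicitly) defers to Kato's work rather than reproving.
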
\qed

In the case of finite type, Kato proved that the projective dimension of any standard module is finite (\cite{Kato_An_algebraic_study_of_extension_algebra,Kato_PBW_bases_and_KLR_algebras}).
Theorem \ref{theorem:5.5} show that the projective dimensions of a kind of standard modules are also finite in the general case.

\subsection{The proof of Theorem \ref{theorem:5.4}}\label{subsection:5.3}

For convenience, a sheaf $\mathcal{A}\in\mathcal{D}_{G_{\mathbf{V}^{(m)}}}(E_{\mathbf{V}^{(m)}})$ is called with Property \textbf{A}(m), if $\mathcal{A}$ satisfies the following conditions.
There exists $s_{\mathcal{A}}\in\mathbb{N}$.
For each $s_{\mathcal{A}}\geq p\in\mathbb{N}$, there exists $\mathcal{A}_p\in\mathcal{D}_{G_{\mathbf{V}^{(m)}}}(E_{\mathbf{V}^{(m)}})$ such that
\begin{enumerate}
\item[(1)]$\mathcal{A}_{s_{\mathcal{A}}}=\mathcal{A}$ and $\mathcal{A}_{0}$ is the direct sum of some semisimple perverse sheaves of the form $I^{(m)}_{p'}[l]$;
\item[(2)]for each $p\geq1$, there exists a distinguished triangle
   \begin{displaymath}
       \xymatrix{
      \mathcal{A}_{p}\ar[r]&{\mathcal{G}^{\mathcal{A}}_{p}}\ar[r]&\mathcal{A}_{p-1}\ar[r]&,
      }
      \end{displaymath}
      where $\mathcal{G}^{\mathcal{A}}_{p}$ is the direct sum of some semisimple perverse sheaves of the form $I^{(m)}_{p'}[l]$.
\end{enumerate}

Theorem \ref{theorem:5.4} means that $\mathcal{E}^{(m)}$ is with Property \textbf{A}(m).

For the proof of Theorem \ref{theorem:5.4}, we need the following lemma.

\begin{lemma}\label{lemma:5.2}
Fix any distinguished triangle
\begin{displaymath}
\xymatrix{
\mathcal{A}\ar[r]&\mathcal{A}'\ar[r]&\mathcal{A}''\ar[r]&,
}
\end{displaymath}
where $\mathcal{A}, \mathcal{A}', \mathcal{A}''\in\mathcal{D}_{G_{\mathbf{V}^{(m)}}}(E_{\mathbf{V}^{(m)}})$.
If $\mathcal{A}$ and $\mathcal{A}''$ are with Property \textbf{A}(m), $\mathcal{A}'$ is with Property \textbf{A}(m) and $s_{\mathcal{A}'}=s_{\mathcal{A}}+s_{\mathcal{A}''}+1$.
\end{lemma}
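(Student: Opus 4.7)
The plan is to glue the resolutions of $\mathcal{A}$ and $\mathcal{A}''$ by iterating the octahedron axiom, with the given triangle $\mathcal{A}\to\mathcal{A}'\to\mathcal{A}''\to$ serving as the single transition step. Write $s=s_{\mathcal{A}}$ and $t=s_{\mathcal{A}''}$. I will construct objects $\mathcal{A}'_{p}$ for $0\le p\le s+t+1$ together with distinguished triangles $\mathcal{A}'_{p}\to\mathcal{G}^{\mathcal{A}'}_{p}\to\mathcal{A}'_{p-1}\to$ for $1\le p\le s+t+1$, where $\mathcal{A}'_{s+t+1}=\mathcal{A}'$ and each $\mathcal{G}^{\mathcal{A}'}_{p}$ is a direct sum of shifts of $I^{(m)}_{p'}$, in three phases.

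Phase 1 (upper $t$ steps). For $p$ running from $s+t+1$ down to $s+2$, maintain as inductive hypothesis the distinguished triangle $\mathcal{A}[s+t+1-p]\to\mathcal{A}'_{p}\to\mathcal{A}''_{p-s-1}\to\mathcal{A}[s+t+2-p]$, the base case $p=s+t+1$ being the given triangle for $\mathcal{A}'$. From this, compose $\mathcal{A}'_{p}\to\mathcal{A}''_{p-s-1}\to\mathcal{G}^{\mathcal{A}''}_{p-s-1}$, set $\mathcal{G}^{\mathcal{A}'}_{p}:=\mathcal{G}^{\mathcal{A}''}_{p-s-1}$, and define $\mathcal{A}'_{p-1}$ as the mapping cone. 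Applying the octahedron axiom to this composable pair, the three mapping cones to identify are $\mathcal{A}[s+t+2-p]$ (by rotating the inductive triangle), $\mathcal{A}''_{p-s-2}$ (from the resolution of $\mathcal{A}''$), and $\mathcal{A}'_{p-1}$; the octahedron thus delivers the next inductive triangle at index $p-1$.

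Phase 2 (transition, $p=s+1$). Phase 1 terminates with $\mathcal{A}[t]\to\mathcal{A}'_{s+1}\to\mathcal{A}''_{0}\to\mathcal{A}[t+1]$; since $\mathcal{A}''_{0}$ is already a direct sum of shifts of $I^{(m)}_{p'}$ by Property \textbf{A}(m) for $\mathcal{A}''$, rotate the triangle once to obtain $\mathcal{A}'_{s+1}\to\mathcal{A}''_{0}\to\mathcal{A}[t+1]\to$, and declare $\mathcal{G}^{\mathcal{A}'}_{s+1}:=\mathcal{A}''_{0}$ and $\mathcal{A}'_{s}:=\mathcal{A}[t+1]$. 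Phase 3 (lower $s$ steps). Shift the entire resolution of $\mathcal{A}$ by $[t+1]$: for $1\le p\le s$ set $\mathcal{A}'_{p}:=\mathcal{A}_{p}[t+1]$ and $\mathcal{G}^{\mathcal{A}'}_{p}:=\mathcal{G}^{\mathcal{A}}_{p}[t+1]$, which is still a direct sum of shifts of $I^{(m)}_{p'}$ since shifting preserves this form; the terminal object $\mathcal{A}'_{0}=\mathcal{A}_{0}[t+1]$ is of the required form. Counting yields $t+1+s=s_{\mathcal{A}}+s_{\mathcal{A}''}+1$ triangles, as claimed.

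The main obstacle is the careful bookkeeping of Phase 1: at each step one must verify that the octahedron axiom assembles exactly the three mapping cones claimed, so that the inductive triangle propagates with the shift index on $\mathcal{A}$ increasing by precisely one per step and the index on the $\mathcal{A}''_{\bullet}$ side decreasing by one. Once Phase 1 is set up correctly, the transition step and Phase 3 are essentially formal.
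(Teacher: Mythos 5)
Your proof is correct and is essentially the paper's argument unrolled: the paper proves the lemma by induction on $s_{\mathcal{A}''}$, using the octahedral axiom in the inductive step to peel off one layer of the resolution of $\mathcal{A}''$, and handles $s_{\mathcal{A}''}=0$ by rotating the given triangle and shifting the resolution of $\mathcal{A}$ by $[1]$; your Phase~1 is exactly this inductive step iterated explicitly, and your Phases~2 and~3 are the base case. The bookkeeping in Phase~1 checks out: with $\mathcal{A}[s+t+1-p]\to\mathcal{A}'_{p}\to\mathcal{A}''_{p-s-1}\to$ in hand, the octahedron applied to $\mathcal{A}'_{p}\to\mathcal{A}''_{p-s-1}\to\mathcal{G}^{\mathcal{A}''}_{p-s-1}$ has cones $\mathcal{A}[s+t+2-p]$, $\mathcal{A}''_{p-s-2}$, and $\mathcal{A}'_{p-1}$, which assemble into precisely the triangle you claim at index $p-1$.
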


\begin{proof}[\bf{Proof}]
We shall prove this lemma by induction on $s_{\mathcal{A}''}$.

\item[\textbf{(1)}] For $s_{\mathcal{A}''}=0$, $\mathcal{A}''$ is the direct sum of some semisimple perverse sheaves of the form $I^{(m)}_{p'}[l]$.
    Let $\mathcal{A}'_{s_{\mathcal{A}'}}=\mathcal{A}'$ and $\mathcal{A}'_{p}=\mathcal{A}_{p}[1]$
    for any $0\leq p\leq s_{\mathcal{A}}=s_{\mathcal{A}'}-1$.
    Let ${\mathcal{G}^{\mathcal{A}'}_{s_{\mathcal{A}'}}}=\mathcal{A}''$ and
    ${\mathcal{G}^{\mathcal{A}'}_{p}}={\mathcal{G}^{\mathcal{A}}_{p}}[1]$ for any $1\leq p\leq s_{\mathcal{A}}=s_{\mathcal{A}'}-1$. The distinguished triangle
    \begin{displaymath}
    \xymatrix{
    \mathcal{A}\ar[r]&\mathcal{A}'\ar[r]&\mathcal{A}''\ar[r]&
    }
    \end{displaymath}
    implies
    \begin{displaymath}
    \xymatrix{
    \mathcal{A}'_{s_{\mathcal{A}'}}\ar[r]&{\mathcal{G}^{\mathcal{A}'}_{s_{\mathcal{A}'}}}
    \ar[r]&\mathcal{A}'_{s_{\mathcal{A}'}-1}\ar[r]&
    }
    \end{displaymath}
    and the distinguished triangles
    \begin{displaymath}
    \xymatrix{
    \mathcal{A}_{p}\ar[r]&{\mathcal{G}^{\mathcal{A}}_{p}}\ar[r]&\mathcal{A}_{p-1}\ar[r]&
    }
    \end{displaymath}
    imply
    \begin{displaymath}
    \xymatrix{
    \mathcal{A}'_{p}\ar[r]&{\mathcal{G}^{\mathcal{A}}_{p}}\ar[r]&\mathcal{A}'_{p-1}\ar[r]&
    }
    \end{displaymath}
    for $1\leq p\leq s_{\mathcal{A}'}-1$. Hence, $\mathcal{A}'$ is with Property \textbf{A}(m).
\item[\textbf{(2)}] Assume that the lemma is true for $s_{\mathcal{A}''}<k$, we shall prove the lemma for $s_{\mathcal{A}''}=k$.

    Now we have the following two distinguished triangles
    \begin{displaymath}
    \xymatrix{
    \mathcal{A}'\ar[r]^{u}&\mathcal{A}''\ar[r]&\mathcal{A}[1]\ar[r]&
    }
    \end{displaymath}
    and
    \begin{displaymath}
    \xymatrix{
    \mathcal{A}''\ar[r]^{v}&\mathcal{G}^{\mathcal{A}''}_{k}\ar[r]&\mathcal{A}''_{k-1}\ar[r]&.
    }
    \end{displaymath}
    Then we can construct the following distinguished triangle
    \begin{displaymath}
    \xymatrix{
    \mathcal{A}'\ar[r]^{vu}&\mathcal{G}^{\mathcal{A}''}_{k}\ar[r]&\mathcal{B}\ar[r]&.
    }
    \end{displaymath}
    By the octahedral axiom, there exist two maps $f:\mathcal{A}[1]\rightarrow\mathcal{B}$ and $g:\mathcal{B}\rightarrow\mathcal{A}''_{k-1}$ such that the following diagram commutes and the third row is a distinguished triangle
    \begin{displaymath}
    \xymatrix{
    \mathcal{A}'\ar[r]^{\textrm{id}}\ar[d]^{u}&\mathcal{A}'\ar[d]^{vu}&&\\
    \mathcal{A}''\ar[r]^{v}\ar[d]&\mathcal{G}^{\mathcal{A}''}_k\ar[r]\ar[d]&\mathcal{A}''_{k-1}\ar[r]\ar[d]^{\textrm{id}}&\\
    \mathcal{A}[1]\ar[r]^{f}\ar[d]&\mathcal{B}\ar[r]^{g}\ar[d]&\mathcal{A}''_{k-1}\ar[r]&\\
    &&&
    }
    \end{displaymath}
    Consider the following distinguished triangle
    \begin{displaymath}
    \xymatrix{
    \mathcal{A}[1]\ar[r]^{f}&\mathcal{B}\ar[r]^{g}&\mathcal{A}''_{k-1}\ar[r]&.
    }
    \end{displaymath}
    Since $\mathcal{A}''_{k}=\mathcal{A}''$ is with Property \textbf{A}(m), $\mathcal{A}''_{k-1}$ is also with Property \textbf{A}(m) and $s_{\mathcal{A}''_{k-1}}=k-1$.
    By the induction hypothesis, $\mathcal{B}$ is with Property \textbf{A}(m) and $s_{\mathcal{B}}=s_{\mathcal{A}}+k$.
    Hence, for each $s_{\mathcal{B}}\geq p\in\mathbb{N}$, there exists $\mathcal{B}_p\in\mathcal{D}_{G_{\mathbf{V}^{(m)}}}(E_{\mathbf{V}^{(m)}})$ such that
    \begin{enumerate}
    \item[1)]$\mathcal{B}_{s_{\mathcal{B}}}=\mathcal{B}$ and $\mathcal{B}_{0}$ is the direct sum of some semisimple perverse sheaves of the form $I^{(m)}_{p'}[l]$;
    \item[2)]for each $p\geq1$, there exists a distinguished triangle
       \begin{displaymath}
       \xymatrix{
       \mathcal{B}_{p}\ar[r]&{\mathcal{G}^{\mathcal{B}}_{p}}\ar[r]&\mathcal{B}_{p-1}\ar[r]&,
       }
       \end{displaymath}
       where $\mathcal{G}^{\mathcal{B}}_{p}$ is the direct sum of some semisimple perverse sheaves of the form $I^{(m)}_{p'}[l]$.
       \end{enumerate}
       Note that $s_{\mathcal{A}'}=s_{\mathcal{B}}+1$.
       Let $\mathcal{A}'_{s_{\mathcal{A}'}}=\mathcal{A}'$ and $\mathcal{A}'_{p}=\mathcal{B}_{p}$
       for any $0\leq p\leq s_{\mathcal{B}}=s_{\mathcal{A}'}-1$.
       Let ${\mathcal{G}^{\mathcal{A}'}_{s_{\mathcal{A}'}}}=\mathcal{G}^{\mathcal{A}''}_{k}$ and $\mathcal{G}^{\mathcal{A}'}_{p}=\mathcal{G}^{\mathcal{B}}_{p}$ for any $1\leq p\leq s_{\mathcal{B}}=s_{\mathcal{A}'}-1$.
       The distinguished triangle
       \begin{displaymath}
       \xymatrix{
       \mathcal{A}'\ar[r]^{vu}&\mathcal{G}^{\mathcal{A}''}_{k}\ar[r]&\mathcal{B}\ar[r]&
       }
       \end{displaymath}
       implies
       \begin{displaymath}
       \xymatrix{
       \mathcal{A}'_{s_{\mathcal{A}'}}\ar[r]^{vu}&{\mathcal{G}^{\mathcal{A}'}_{s_{\mathcal{A}'}}}\ar[r]&\mathcal{A}'_{s_{\mathcal{A}'}-1}\ar[r]&
       }
       \end{displaymath}
       and the distinguished triangles
       \begin{displaymath}
       \xymatrix{
       \mathcal{B}_{p}\ar[r]&{\mathcal{G}^{\mathcal{B}}_{p}}\ar[r]&\mathcal{B}_{p-1}\ar[r]&,
       }
       \end{displaymath}
       imply
    \begin{displaymath}
    \xymatrix{
    \mathcal{A}'_{p}\ar[r]&{\mathcal{G}^{\mathcal{A}}_{p}}\ar[r]&\mathcal{A}'_{p-1}\ar[r]&
    }
    \end{displaymath}
    for $1\leq p\leq s_{\mathcal{A}'}-1$. Hence, $\mathcal{A}'$ is with Property \textbf{A}{(m)}.

By induction, the proof is finished.

\end{proof}

For the proof of Theorem \ref{theorem:5.4}, we also need the following proposition.

\begin{proposition}\label{theorem:5.3}
For each $m\geq p\in\mathbb{N}$, there exists $\mathcal{C}_p^{(m)}\in\mathcal{D}_{G_{\mathbf{V}^{(m)}}}(E_{\mathbf{V}^{(m)}})$ such that
\begin{enumerate}
\item[(1)]$\mathcal{C}_m^{(m)}=I^{(m)}_m$ and $\mathcal{C}^{(m)}_{0}=v^{-mN}({\mathcal{L}}_{mi}\circledast\mathbf{1}_{_iE_{\mathbf{V}^{(0)}}})$;
\item[(2)]for each $p\geq1$, there exists a  distinguished triangle
    \begin{displaymath}
       \xymatrix{
      v^{a^{(m)}_p}({\mathcal{L}}_{(m-p)i}\circledast\mathbf{1}_{_iE_{\mathbf{V}^{(p)}}})\ar[r]
      &{\mathcal{C}_p^{(m)}}\ar[r]
      &\mathcal{C}_{p-1}^{(m)}\ar[r]
      &,
      }
  \end{displaymath}
    where $a^{(m)}_p=p(m-p)-mN$.
\end{enumerate}
\end{proposition}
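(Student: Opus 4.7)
The plan is to proceed by downward induction on $p$, from $p = m$ down to $p = 0$. The base case sets $\mathcal{C}^{(m)}_m = I^{(m)}_m$, which is simply $\mathbf{1}_{E_{\mathbf{V}^{(m)}}}[mN]$ since $\tilde{S}^{(m)}_m = E_{\mathbf{V}^{(m)}}$ and $\pi_m$ is the identity map. At the other end, one verifies that the terminal object produced by the construction matches $v^{-mN}(\mathcal{L}_{mi} \circledast \mathbf{1}_{_iE_{\mathbf{V}^{(0)}}})$.

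The first step is to identify each sheaf $\mathcal{L}_{(m-p)i} \circledast \mathbf{1}_{_iE_{\mathbf{V}^{(p)}}}$ as the pushforward (with a standard shift) of the constant sheaf along the first projection from the explicit variety
$T^{(m)}_p := \{(x, W) \in E_{\mathbf{V}^{(m)}} \times Gr(m-p, V_i) : W + \mathrm{Im}(x) = V_i\}$.
This identification follows from the definition of the induction product $\circledast$ together with the observation that, since the only arrows between $i$ and $j$ point from $j$ to $i$, any subspace $W \subset V_i$ of the correct dimension automatically defines a subrepresentation of type $(m-p)i$; the condition that the quotient representation lies in $_iE_{\mathbf{V}^{(p)}}$ then translates exactly into $W + \mathrm{Im}(x) = V_i$. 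A direct bookkeeping shows that the twist $v^{a^{(m)}_p}$ with $a^{(m)}_p = p(m-p) - mN$ is precisely the shift required to balance the perverse normalization of $\circledast$ (contributing $-p(m-p)$ from $m_{\nu'\nu''}$) against the generic fiber dimension $p(m-p)$ of $T^{(m)}_p \to E_{\mathbf{V}^{(m)}}$ and the ambient dimension $mN$ of $E_{\mathbf{V}^{(m)}}$.

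For the inductive step, I would realize the triangle through an open/closed decomposition on the trivial Grassmannian bundle $G^{(m)}_p := E_{\mathbf{V}^{(m)}} \times Gr(m-p, V_i)$, inside which $T^{(m)}_p$ sits as an open subvariety. The closed complement $G^{(m)}_p \setminus T^{(m)}_p = \{(x, W) : W + \mathrm{Im}(x) \subsetneq V_i\}$ fibers, via the assignment sending $(x, W)$ to $(x, W + \mathrm{Im}(x))$ completed to a $(m-p+1)$-dimensional subspace, over a variety of the same shape as $T^{(m)}_{p-1}$ but with one extra dimension absorbed. Pushing the resulting distinguished triangle forward to $E_{\mathbf{V}^{(m)}}$ and using the K\"unneth-type identity $(\mathrm{pr}_1)_!\mathbf{1}_{G^{(m)}_p} \cong H^\ast(Gr(m-p,V_i)) \otimes \mathbf{1}_{E_{\mathbf{V}^{(m)}}}$ (together with the base case $\mathcal{C}^{(m)}_m = I^{(m)}_m$) produces the desired morphism $v^{a^{(m)}_p}(\mathcal{L}_{(m-p)i} \circledast \mathbf{1}_{_iE_{\mathbf{V}^{(p)}}}) \to \mathcal{C}^{(m)}_p$, with cone $\mathcal{C}^{(m)}_{p-1}$ carrying the recursive structure needed for the next step.

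The main obstacle will be organizing these pieces so that the successive distinguished triangles chain together correctly and the shifts $a^{(m)}_p$ accumulate to match the prescribed values — in particular, so that after $m$ iterations the construction terminates with exactly $v^{-mN}(\mathcal{L}_{mi} \circledast \mathbf{1}_{_iE_{\mathbf{V}^{(0)}}})$ rather than merely a complex in the same isomorphism class modulo lower-dimensional pieces. Since each $\mathcal{C}^{(m)}_p$ is only pinned down up to quasi-isomorphism by cone formation, compatible choices of representatives at each stage are required. Verifying the termination uses the fact that $T^{(m)}_0 = E_{\mathbf{V}^{(m)}}$ (with the forced choice $W = V_i$), so that $\mathcal{L}_{mi} \circledast \mathbf{1}_{_iE_{\mathbf{V}^{(0)}}}$ reduces, up to the standard perverse shift, to $\mathbf{1}_{E_{\mathbf{V}^{(m)}}}$, whose twist by $v^{-mN}$ gives exactly the shift $[-mN]$ needed to close the induction.
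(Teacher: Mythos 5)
Your proposal takes a genuinely different route from the paper's, and in its current form it has a gap that breaks the inductive step.

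The paper's proof never works on Grassmannian bundles over the ambient variety $E_{\mathbf{V}^{(m)}}$. Instead it maintains a \emph{factorization invariant}: at each stage one has
$\mathcal{C}^{(m)}_p = \mathcal{L}_{(m-p)i}\circledast\hat{\mathcal{C}}^{(m)}_p$
with $\hat{\mathcal{C}}^{(m)}_p\in\mathcal{D}_{G_{\mathbf{V}^{(p)}}}(E_{\mathbf{V}^{(p)}})$ and $\hat{\mathcal{C}}^{(m)}_p\simeq v^{a^{(m)}_p}\mathbf{1}_{E_{\mathbf{V}^{(p)}}}$. One then performs the elementary open/closed decomposition on the \emph{small} variety $E_{\mathbf{V}^{(p)}}$ into $_iE_{\mathbf{V}^{(p)}}$ (open) and $S^{(p)}_{p-1}$ (closed), and applies $\mathcal{L}_{(m-p)i}\circledast(-)$ to that triangle. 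Peeling off exactly one stratum at a time in the ambient dimension $pi+j$ is what keeps the closed part usable: $(i^{(p)}_p)_\ast(i^{(p)}_p)^\ast\hat{\mathcal{C}}^{(m)}_p$ is again a shifted constant sheaf, and the $\circledast$-factorization re-enters with $\mathcal{L}_{(m-p+1)i}$ replacing $\mathcal{L}_{(m-p)i}$, so the invariant is reproduced with $p$ lowered by one.

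Your proposal instead decomposes the trivial Grassmannian bundle $G^{(m)}_p = E_{\mathbf{V}^{(m)}}\times Gr(m-p,V_i)$ into the open locus $T^{(m)}_p=\{(x,W): W+\mathrm{Im}(x)=V_i\}$ and its closed complement, and proposes to push that triangle forward along $\mathrm{pr}_1$. Two things go wrong. First, the description of the closed complement is not coherent: on $\{(x,W):W+\mathrm{Im}(x)\subsetneq V_i\}$ the dimension of $W+\mathrm{Im}(x)$ is not constant (it ranges through $m-p,\dots,m-1$ depending on how much of $\mathrm{Im}(x)$ lies inside $W$), so the assignment $(x,W)\mapsto W+\mathrm{Im}(x)$ neither lands in a single Grassmannian nor admits a canonical ``completion to an $(m-p+1)$-dimensional subspace.'' There is no fibration of this closed set over a space resembling $T^{(m)}_{p-1}$; one would have to stratify it further by $\dim(W+\mathrm{Im}(x))$, at which point the bookkeeping is no longer a single distinguished triangle. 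Second, the middle term of your triangle would be $(\mathrm{pr}_1)_!\mathbf{1}_{G^{(m)}_p}$, which by K\"unneth is a direct sum of $\dim H^\ast(Gr(m-p,V_i))$ shifted copies of $\mathbf{1}_{E_{\mathbf{V}^{(m)}}}$. This cannot be the correct $\mathcal{C}^{(m)}_p$: for $p<m$ it has full support on $E_{\mathbf{V}^{(m)}}$ and multiple summands, whereas in the paper's construction each $\mathcal{C}^{(m)}_p$ is a single shifted constant sheaf supported on the closed stratum $S^{(m)}_p$.

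One more caution about the identification you rely on: you realize $\mathcal{L}_{(m-p)i}\circledast\mathbf{1}_{_iE_{\mathbf{V}^{(p)}}}$ as a pushforward from pairs $(x,W)$ with $\dim W = m-p$ playing the role of a subrepresentation. Compare this against how the paper sets up $\tilde{S}^{(m)}_p$ (where the relevant subspace $W\subset V_i$ has $\dim W = p$, not $m-p$, and the stability condition is $\mathrm{Im}(x)\subset W$) and against Theorem \ref{theorem:5.1}; you have swapped the roles of sub and quotient in Lusztig's induction diagram. With the paper's conventions the relevant incidence condition is $\mathrm{Im}(x) = W$ with $\dim W = p$, which is an $\mathcal{N}^{(m)}_p$-type locus, not your $T^{(m)}_p$. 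Aligning this would give you $j^{(m)}_{p!}\mathbf{1}_{\mathcal{N}^{(m)}_p}$ up to shift, i.e. the open piece of the open/closed decomposition of $S^{(m)}_p$, which is also a viable route — but it is a decomposition of a closed subvariety of $E_{\mathbf{V}^{(m)}}$, not of a trivial Grassmannian bundle, and it requires you to first know $\mathcal{C}^{(m)}_p\simeq v^{-mN}\mathbf{1}_{S^{(m)}_p}$, which is precisely what the paper's $\circledast$-factorization invariant delivers.
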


\begin{proof}[\bf{Proof}]
We shall construct $\mathcal{C}_p^{(m)}$ for each $p$ by induction.

\item[\textbf{(1)}] For $p=m$, let $\mathcal{C}_m^{(m)}=I^{(m)}_m$. It is clear that $I^{(m)}_m\simeq v^{-mN}\mathbf{1}_{E_{\mathbf{V}^{(m)}}}$, that is $\mathcal{C}_m^{(m)}\simeq v^{a^{(m)}_m}\mathbf{1}_{E_{\mathbf{V}^{(m)}}}$.
\item[\textbf{(2)}] For each $p<m$, we shall construct $\mathcal{C}_p^{(m)}$ and show that it satisfies the following conditions:
    \begin{enumerate}
   \item[1)] there exists a distinguished triangle
    \begin{displaymath}
       \xymatrix{
      v^{a^{(m)}_{p+1}}({\mathcal{L}}_{(m-p-1)i}\circledast\mathbf{1}_{_iE_{\mathbf{V}^{(p+1)}}})\ar[r]
      &{\mathcal{C}_{p+1}^{(m)}}\ar[r]
      &\mathcal{C}_{p}^{(m)}\ar[r]
      &;
      }
  \end{displaymath}
   \item[2)] $\mathcal{C}_{p}^{(m)}={\mathcal{L}}_{(m-p)i}\circledast\hat{\mathcal{C}}_{p}^{(m)}$,
       where $\hat{\mathcal{C}}_{p}^{(m)}\in\mathcal{D}_{G_{\mathbf{V}^{(p)}}}(E_{\mathbf{V}^{(p)}})$ and
       $\hat{\mathcal{C}}_{p}^{(m)}\simeq v^{a^{(m)}_p}\mathbf{1}_{E_{\mathbf{V}^{(p)}}}$.
   \end{enumerate}
   First, We construct $\mathcal{C}_{p}^{(m)}$ for $p=m-1$. There is a distinguished triangle
      \begin{equation}\label{equation:5.2.1}
       \xymatrix{
      {(j^{(m)}_m)_!(j^{(m)}_m)^!(\mathcal{C}_m^{(m)})}\ar[r]
      &\mathcal{C}_m^{(m)}\ar[r]
      &{(i^{(m)}_m)_\ast(i^{(m)}_m)^\ast(\mathcal{C}_m^{(m)})}\ar[r]
      &.
      }
      \end{equation}
   Since $\mathcal{C}_m^{(m)}\simeq v^{a^{(m)}_m}\mathbf{1}_{E_{\mathbf{V}^{(m)}}}$,
      $${(j^{(m)}_m)_!(j^{(m)}_m)^!(\mathcal{C}_m^{(m)})}\simeq v^{a^{(m)}_m}\mathbf{1}_{_iE_{\mathbf{V}^{(m)}}},$$
   and
      $$(i^{(m)}_m)_\ast(i^{(m)}_m)^\ast(\mathcal{C}_m^{(m)})\simeq v^{a^{(m)}_m}\mathbf{1}_{S^{(m)}_{m-1}}.$$
   Let $\mathcal{C}_{m-1}^{(m)}=(i^{(m)}_m)_\ast(i^{(m)}_m)^\ast(\mathcal{C}_m^{(m)})$.
   By (\ref{equation:5.2.1}), there exists a distinguished triangle
   \begin{displaymath}
       \xymatrix{
      v^{a^{(m)}_m}\mathbf{1}_{_iE_{\mathbf{V}^{(m)}}}\ar[r]
      &\mathcal{C}_m^{(m)}\ar[r]
      &\mathcal{C}_{m-1}^{(m)}\ar[r]
      &.
      }
   \end{displaymath}
   Since the support of $\mathcal{C}_{m-1}^{(m)}$ is in $S^{(m)}_{m-1}$, it can be wrote as
   $$\mathcal{C}_{m-1}^{(m)}={\mathcal{L}}_{i}\circledast\hat{\mathcal{C}}_{m-1}^{(m)},$$
   where $\hat{\mathcal{C}}_{m-1}^{(m)}\in\mathcal{D}_{G_{\mathbf{V}^{(m-1)}}}(E_{\mathbf{V}^{(m-1)}})$.
   We have $\mathcal{C}_{m-1}^{(m)}\simeq v^{a^{(m)}_m}\mathbf{1}_{S^{(m)}_{m-1}}=v^{-mN}\mathbf{1}_{S^{(m)}_{m-1}}$. Hence
   $$v^{-(m-1)}\hat{\mathcal{C}}_{m-1}^{(m)}\simeq v^{-mN}\mathbf{1}_{E_{\mathbf{V}^{(m-1)}}},$$
   that is,
   $$\hat{\mathcal{C}}_{m-1}^{(m)}\simeq v^{-mN}v^{m-1}\mathbf{1}_{E_{\mathbf{V}^{(m-1)}}}
   \simeq v^{a^{(m)}_{m-1}}\mathbf{1}_{E_{\mathbf{V}^{(m-1)}}}.$$

   Now, we have constructed $\mathcal{C}_{m-1}^{(m)}$ satisfying the following conditions:
   \begin{enumerate}
   \item[1)] there exists a distinguished triangle
    \begin{displaymath}
       \xymatrix{
      v^{a^{(m)}_{m}}\mathbf{1}_{_iE_{\mathbf{V}^{(p)}}}\ar[r]
      &{\mathcal{C}_{m}^{(m)}}\ar[r]
      &\mathcal{C}_{m-1}^{(m)}\ar[r]
      &;
      }
  \end{displaymath}
   \item[2)] $\mathcal{C}_{m-1}^{(m)}={\mathcal{L}}_{i}\circledast\hat{\mathcal{C}}_{m-1}^{(m)}$,
       where $\hat{\mathcal{C}}_{m-1}^{(m)}\in\mathcal{D}_{G_{\mathbf{V}^{(m-1)}}}(E_{\mathbf{V}^{(m-1)}})$ and
       $\hat{\mathcal{C}}_{m-1}^{(m)}\simeq v^{a^{(m)}_{m-1}}\mathbf{1}_{E_{\mathbf{V}^{(m-1)}}}$.
   \end{enumerate}
\item[\textbf{(3)}] Assume that we have constructed $\mathcal{C}_{p}^{(m)}$ satisfying the following conditions:
   \begin{enumerate}
   \item[1)] there exists a distinguished triangle
    \begin{displaymath}
       \xymatrix{
      v^{a^{(m)}_{p+1}}({\mathcal{L}}_{(m-p-1)i}\circledast\mathbf{1}_{_iE_{\mathbf{V}^{(p+1)}}})\ar[r]
      &{\mathcal{C}_{p+1}^{(m)}}\ar[r]
      &\mathcal{C}_{p}^{(m)}\ar[r]
      &;
      }
  \end{displaymath}
   \item[2)] $\mathcal{C}_{p}^{(m)}={\mathcal{L}}_{(m-p)i}\circledast\hat{\mathcal{C}}_{p}^{(m)}$,
       where $\hat{\mathcal{C}}_{p}^{(m)}\in\mathcal{D}_{G_{\mathbf{V}^{(p)}}}(E_{\mathbf{V}^{(p)}})$ and
       $\hat{\mathcal{C}}_{p}^{(m)}\simeq v^{a^{(m)}_p}\mathbf{1}_{E_{\mathbf{V}^{(p)}}}$.
   \end{enumerate}
   We shall construct $\mathcal{C}_{p-1}^{(m)}$.
    First, there is a distinguished triangle
\begin{displaymath}
       \xymatrix{
      {(j^{(p)}_{p})_!(j^{(p)}_{p})^!(\hat{\mathcal{C}}_{p}^{(m)})}\ar[r]
      &{\hat{\mathcal{C}}_{p}^{(m)}}\ar[r]
      &{(i^{(p)}_{p})_\ast(i^{(p)}_{p})^\ast(\hat{\mathcal{C}}_{p}^{(m)})}\ar[r]
      &.
      }
\end{displaymath}
Hence, we have
\begin{eqnarray}\label{equation:5.2.2}
       \xymatrix{
      {{\mathcal{L}}_{(m-p)i}\circledast(j^{(p)}_{p})_!(j^{(p)}_{p})^!(\hat{\mathcal{C}}_{p}^{(m)})}\ar[r]
      &{{\mathcal{L}}_{(m-p)i}\circledast\hat{\mathcal{C}}_{p}^{(m)}}
      }\nonumber\\
      \xymatrix{
      \ar[r]
      &{{\mathcal{L}}_{(m-p)i}\circledast(i^{(p)}_{p})_\ast(i^{(p)}_{p})^\ast(\hat{\mathcal{C}}_{p}^{(m)})}\ar[r]
      &.
      }
\end{eqnarray}
Since $\hat{\mathcal{C}}_{p}^{(m)}\simeq v^{a^{(m)}_p}\mathbf{1}_{E_{\mathbf{V}^{(p)}}}$,
$$(j^{(p)}_{p})_!(j^{(p)}_{p})^!(\hat{\mathcal{C}}_{p}^{(m)})
\simeq v^{a^{(m)}_p}\mathbf{1}_{_iE_{\mathbf{V}^{(p)}}},$$
and
$$(i^{(p)}_{p})_\ast(i^{(p)}_{p})^\ast(\hat{\mathcal{C}}_{p}^{(m)})
\simeq v^{a^{(m)}_p}\mathbf{1}_{S^{(p)}_{p-1}}.$$
Let $\mathcal{C}_{p-1}^{(m)}={\mathcal{L}}_{(m-p)i}\circledast(i^{(p)}_{p})_\ast(i^{(p)}_{p})^\ast(\hat{\mathcal{C}}_{p}^{(m)})$.
By (\ref{equation:5.2.2}), there exists a distinguished triangle
   \begin{displaymath}
       \xymatrix{
      {v^{a^{(m)}_{p}}({\mathcal{L}}_{(m-p)i}\circledast\mathbf{1}_{_iE_{\mathbf{V}^{(p)}}}})\ar[r]
      &{\mathcal{C}_{p}^{(m)}}\ar[r]
      &\mathcal{C}_{p-1}^{(m)}\ar[r]&.
      }
\end{displaymath}
Since the support of $\mathcal{C}_{p-1}^{(m)}$ is in $S^{(m)}_{p-1}$, it can be wrote as
   $$\mathcal{C}_{p-1}^{(m)}={\mathcal{L}}_{(m-p+1)i}\circledast\hat{\mathcal{C}}_{p-1}^{(m)},$$
   where $\hat{\mathcal{C}}_{p-1}^{(m)}\in\mathcal{D}_{G_{\mathbf{V}^{(p-1)}}}(E_{\mathbf{V}^{(p-1)}})$.
   Since
   $$(i^{(p)}_{p})_\ast(i^{(p)}_{p})^\ast(\hat{\mathcal{C}}_{p}^{(m)})
   \simeq v^{a^{(m)}_p}\mathbf{1}_{S^{(p)}_{p-1}},$$
   we have
   $$\mathcal{C}_{p-1}^{(m)}={{\mathcal{L}}_{(m-p)i}\circledast(i^{(p)}_{p})_\ast(i^{(p)}_{p})^\ast(\hat{\mathcal{C}}_{p}^{(m)})}
   \simeq v^{-(m-p)p}v^{a^{(m)}_p}\mathbf{1}_{S^{(m)}_{p-1}}\simeq v^{-mN}\mathbf{1}_{S^{(m)}_{p-1}}.$$
   Hence
$$v^{-(m-p+1)(p-1)}\hat{\mathcal{C}}_{p-1}^{(m)}=v^{-mN}\mathbf{1}_{E_{\mathbf{V}^{(p-1)}}},$$
that is
$$\hat{\mathcal{C}}_{p-1}^{(m)}\simeq v^{-mN}v^{(m-p+1)(p-1)}\mathbf{1}_{E_{\mathbf{V}^{(p-1)}}}\simeq v^{a^{(m)}_{p-1}}\mathbf{1}_{E_{\mathbf{V}^{(p-1)}}}.$$

Now, we have constructed $\mathcal{C}_{p-1}^{(m)}$ satisfying the following conditions:
   \begin{enumerate}
   \item[1)] there exists a distinguished triangle
    \begin{displaymath}
       \xymatrix{
      v^{a^{(m)}_{p}}({\mathcal{L}}_{(m-p)i}\circledast\mathbf{1}_{_iE_{\mathbf{V}^{(p)}}})\ar[r]
      &{\mathcal{C}_{p}^{(m)}}\ar[r]
      &\mathcal{C}_{p-1}^{(m)}\ar[r]
      &;
      }
  \end{displaymath}
   \item[2)] $\mathcal{C}_{p-1}^{(m)}={\mathcal{L}}_{(m-p+1)i}\circledast\hat{\mathcal{C}}_{p-1}^{(m)}$,
       where $\hat{\mathcal{C}}_{p-1}^{(m)}\in\mathcal{D}_{G_{\mathbf{V}^{(p-1)}}}(E_{\mathbf{V}^{(p-1)}})$ and
       $\hat{\mathcal{C}}_{p-1}^{(m)}\simeq v^{a^{(m)}_{p-1}}\mathbf{1}_{E_{\mathbf{V}^{(p-1)}}}$.
   \end{enumerate}

By induction, the proof is finished.

\end{proof}

In Section \ref{subsection:4.1}, we have $\chi:K(\mathcal{Q})\rightarrow\mathcal{F}$.
In this section, we identify the Lusztig's algebra $\mathbf{f}$ with the corresponding composition subalgebra $\mathcal{F}$.

Lusztig proved the following theorem.

\begin{theorem}[\cite{Lusztig_Quivers_perverse_sheaves_and_the_quantized_enveloping_algebras}]\label{theorem:5.1}
$\chi(I^{(m)}_p)=\theta_i^{(m-p)}\theta_j\theta_i^{(p)}$ for each $m\geq p\in\mathbb{N}$.
\end{theorem}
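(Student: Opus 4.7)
\medskip

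\noindent\textbf{Proof plan for Theorem \ref{theorem:5.1}.}
The strategy is to realize $I^{(m)}_p$ as one of the semisimple perverse sheaves $\mathcal{L}_{\mathbf{y}}$ from Section \ref{subsection:3.1}, and then invoke Theorem \ref{theorem:3.1} together with Proposition \ref{theorem:4.3} (which, under the identification $\mathbf{f}\cong\mathcal{F}$, says $\chi=\lambda_{\mathcal{A}}$). Concretely, I would choose the flag type $\mathbf{y}=(\mathbf{i},\mathbf{a})$ with $\mathbf{i}=(i,j,i)$ and $\mathbf{a}=(m-p,1,p)$, so that by definition $\theta_{\mathbf{y}}=\theta_{i}^{(m-p)}\theta_{j}\theta_{i}^{(p)}$ and $\mathbf{y}\in Y_{\nu^{(m)}}$.

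The heart of the argument is a geometric identification
\[
\tilde{F}_{\mathbf{y}}\;\cong\;\tilde{S}^{(m)}_{p}
\]
compatible with the projections to $E_{\mathbf{V}^{(m)}}$. A flag $\phi=(\mathbf{V}^{(m)}=\mathbf{V}^{3}\supset\mathbf{V}^{2}\supset\mathbf{V}^{1}\supset 0)$ of type $\mathbf{y}$ has $\mathbf{V}^{2}$ of dimension vector $pi+j$ and $\mathbf{V}^{2}_i\subset V_i$ of dimension $p$; the whole flag is determined by the subspace $W:=\mathbf{V}^{2}_i\subset V_i$. One then checks that the stability conditions for $x\in E_{\mathbf{V}^{(m)}}$ on $\phi$ reduce, under the hypothesis that there are no arrows from $i$ to $j$, to the single requirement
\[
\bigoplus_{h\in H,\,t(h)=i}x_h\bigl(V_{s(h)}\bigr)\subset W,
\]
because the other potentially nontrivial stability relations (for arrows $i\to j$, and for arrows $h:j\to i$ at levels $l\ne 2$) are either vacuous on dimensional grounds or forbidden by hypothesis. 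This matches exactly the defining incidence condition of $\tilde{S}^{(m)}_{p}$, and the map $(x,\phi)\mapsto(x,W)$ is an isomorphism intertwining $\pi_{\mathbf{y}}$ with $\pi_p$.

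Since the isomorphism identifies $\dim\tilde{F}_{\mathbf{y}}$ with $\dim\tilde{S}^{(m)}_{p}$, the shifts in the definitions of $\mathcal{L}_{\mathbf{y}}$ and $I^{(m)}_{p}$ agree, giving $I^{(m)}_{p}\cong\mathcal{L}_{\mathbf{y}}$ in $\mathcal{D}_{G_{\mathbf{V}^{(m)}}}(E_{\mathbf{V}^{(m)}})$. By Theorem \ref{theorem:3.1} we have $\lambda_{\mathcal{A}}([\mathcal{L}_{\mathbf{y}}])=\theta_{\mathbf{y}}=\theta_{i}^{(m-p)}\theta_{j}\theta_{i}^{(p)}$, and by Proposition \ref{theorem:4.3} (together with the identification $\mathbf{f}\cong\mathcal{F}$ fixed in this section) this is also $\chi(I^{(m)}_{p})$.

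The main obstacle is the careful bookkeeping in the stability analysis: one must verify that, with the convention of the paper for reading off $\theta_{\mathbf{y}}$ from the flag type, the distinguished subspace produced by the stability conditions sits in the correct layer of the flag and has the correct dimension $p$ (so that the Grassmannian being parametrized matches $\{W\subset V_i\,|\,\dim W=p\}$ rather than its $(m{-}p)$-dimensional counterpart). The assumption $\#\{i\to j\}=0$ is crucial here, as it eliminates exactly the stability constraints that would obstruct the isomorphism $\tilde{F}_{\mathbf{y}}\cong\tilde{S}^{(m)}_{p}$.
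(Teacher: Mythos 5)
The paper does not prove Theorem \ref{theorem:5.1}; it is quoted from Lusztig's \emph{Quivers, perverse sheaves, and quantized enveloping algebras}. Your proposed argument is nonetheless the natural, essentially standard one: exhibit $I^{(m)}_p$ as one of Lusztig's $\mathcal{L}_{\mathbf{y}}$'s by an isomorphism of incidence varieties, and then read off $\chi$ via $\lambda_{\mathcal{A}}$ and Proposition \ref{theorem:4.3}. The flag type $\mathbf{y}=((i,j,i),(m-p,1,p))$ is the correct choice, and your observation that the whole flag is determined by a single $p$-dimensional subspace $W\subset V_i$, with stability reducing precisely to $\mathrm{Im}\bigoplus_{t(h)=i}x_h\subset W$ because $\#\{i\to j\}=0$ kills the only other potentially constraining arrows, is exactly right. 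Since $\mathcal{L}_{\mathbf{y}}$ and $I^{(m)}_p$ are both defined with the shift $[\dim]$ of the (isomorphic) total spaces and no extra Tate twist, the identification really is on the nose, not merely up to a power of $v$.

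One point deserves care, and you correctly signal it in your last paragraph as ``the main obstacle,'' though you do not resolve it explicitly. With the paper's \emph{stated} flag convention ($\mathbf{V}=\mathbf{V}^k\supset\dots\supset\mathbf{V}^0=0$ and $\underline{\dim}\,\mathbf{V}^l/\mathbf{V}^{l-1}=a_li_l$, so the \emph{bottom} step $\mathbf{V}^1$ has dimension $a_1i_1$), for $\mathbf{y}=((i,j,i),(m-p,1,p))$ one would get $\underline{\dim}\,\mathbf{V}^2=(m-p)i+j$ and $\dim\mathbf{V}^2_i=m-p$, which identifies $\tilde F_{\mathbf{y}}$ with $\tilde S^{(m)}_{m-p}$, \emph{not} $\tilde S^{(m)}_p$ — and this would contradict the theorem (compare the $p=m$ boundary case, where $I^{(m)}_m$ is the shifted constant sheaf on all of $E_{\mathbf{V}^{(m)}}$ and $\chi$ is supposed to yield $\theta_j\theta_i^{(m)}$). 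The reading you actually use — $\underline{\dim}\,\mathbf{V}^2=pi+j$ — corresponds to Lusztig's original convention in which the \emph{top} quotient $\mathbf{V}^k/\mathbf{V}^{k-1}$ has type $a_1i_1$, and that is the convention under which the theorem (and the paper's later use of it in Corollary \ref{corollary:5.1}) is consistent. So your argument is correct, and the discrepancy lies with how the paper transcribed the flag convention; it would strengthen your write-up to say this explicitly, rather than leaving the layer/dimension matching as an unverified ``obstacle.''
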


By Proposition \ref{theorem:5.3} and Theorem \ref{theorem:5.1}, we have the following corollary.

\begin{corollary}\label{corollary:5.1}
We have the following formula in $\mathbf{f}$
$$\theta_j\theta_i^{(m)}=\sum_{p=0}^m v^{b^{(m)}_p}\theta_i^{(m-p)}\chi(\mathcal{E}^{(p)}),$$
where $b^{(m)}_p=(p-N)(m-p)$.
\end{corollary}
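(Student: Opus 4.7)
The plan is to apply the Hall-algebra character $\chi$ to the distinguished triangles produced by Proposition \ref{theorem:5.3} and telescope. Since $\chi$ factors through $K(\mathcal{Q})$ and takes distinguished triangles to sums in the Grothendieck group, each triangle
$$\xymatrix{
v^{a^{(m)}_p}(\mathcal{L}_{(m-p)i}\circledast \mathbf{1}_{_iE_{\mathbf{V}^{(p)}}})\ar[r] & \mathcal{C}_p^{(m)} \ar[r] & \mathcal{C}_{p-1}^{(m)} \ar[r] &
}$$
yields the identity
$$\chi(\mathcal{C}_p^{(m)})=v^{a^{(m)}_p}\,\chi(\mathcal{L}_{(m-p)i}\circledast\mathbf{1}_{_iE_{\mathbf{V}^{(p)}}})+\chi(\mathcal{C}_{p-1}^{(m)})$$
in $\mathbf{f}$ (identified with $\mathcal{F}$).

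Next I would compute each ingredient. Since $\circledast$ categorifies the product of $\mathbf{f}$ and $\chi(\mathcal{L}_{(m-p)i})=\theta_i^{(m-p)}$, the first term factors as $\theta_i^{(m-p)}\,\chi(\mathbf{1}_{_iE_{\mathbf{V}^{(p)}}})$. From the definition $\mathcal{E}^{(p)}=j_{\mathbf{V}^{(p)}!}(v^{-pN}\mathbf{1}_{_iE_{\mathbf{V}^{(p)}}})$ and the convention (made explicit in Section \ref{subsection:5.2}) that identifies $j_{\mathbf{V}^{(p)}!}(\mathbf{1}_{_iE_{\mathbf{V}^{(p)}}})$ with $\mathbf{1}_{_iE_{\mathbf{V}^{(p)}}}$, one has $\mathbf{1}_{_iE_{\mathbf{V}^{(p)}}}=v^{pN}\mathcal{E}^{(p)}$ in the Grothendieck group, so $\chi(\mathbf{1}_{_iE_{\mathbf{V}^{(p)}}})=v^{pN}\chi(\mathcal{E}^{(p)})$. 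Combining,
$$\chi(\mathcal{C}_p^{(m)})-\chi(\mathcal{C}_{p-1}^{(m)})=v^{a^{(m)}_p+pN}\,\theta_i^{(m-p)}\,\chi(\mathcal{E}^{(p)}).$$
A direct arithmetic check gives $a^{(m)}_p+pN=p(m-p)-mN+pN=(p-N)(m-p)=b^{(m)}_p$.

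Then I would telescope from $p=m$ down to $p=1$. At the top, Proposition \ref{theorem:5.3}(1) together with Theorem \ref{theorem:5.1} yields $\chi(\mathcal{C}_m^{(m)})=\chi(I^{(m)}_m)=\theta_j\theta_i^{(m)}$. At the bottom, $\chi(\mathcal{C}_0^{(m)})=v^{-mN}\theta_i^{(m)}\chi(\mathcal{E}^{(0)})$, which is precisely the $p=0$ term on the right-hand side since $b^{(m)}_0=-mN$. Assembling the telescoping sum produces
$$\theta_j\theta_i^{(m)}=\sum_{p=0}^m v^{b^{(m)}_p}\theta_i^{(m-p)}\chi(\mathcal{E}^{(p)}),$$
which is the desired identity.

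The only genuine subtlety here is the bookkeeping of Tate twists and cohomological shifts: one must verify that the convention $v^{\pm}[\mathcal{L}]=[\mathcal{L}[\pm 1](\pm\tfrac{1}{2})]$ used on $K(\mathcal{Q})$ is compatible both with the shifts of the triangles (so that the coefficient $v^{a^{(m)}_p}$ propagates correctly under $\chi$) and with the identification of $j_!\mathbf{1}_{_iE}$ made in Section \ref{subsection:5.2}, and that the $\circledast$-product on the level of functions corresponds to multiplication in $\mathcal{F}$. Once these conventions are tracked, the argument is a short induction on $p$ using Lemma-style combination of distinguished triangles, with no further geometric input required beyond Proposition \ref{theorem:5.3} and Theorem \ref{theorem:5.1}.
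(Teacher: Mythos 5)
Your proposal is correct and follows essentially the same route as the paper's proof: applying $\chi$ to the distinguished triangles of Proposition \ref{theorem:5.3}, using Theorem \ref{theorem:5.1} to identify $\chi(\mathcal{C}_m^{(m)})=\chi(I^{(m)}_m)=\theta_j\theta_i^{(m)}$, converting $\chi(\mathbf{1}_{_iE_{\mathbf{V}^{(p)}}})=v^{pN}\chi(\mathcal{E}^{(p)})$, and checking $a^{(m)}_p+pN=b^{(m)}_p$. The paper simply compresses the telescoping into a single displayed equation, while you spell out the intermediate steps; the underlying argument is identical.
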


\begin{proof}[\bf{Proof}]
By Proposition \ref{theorem:5.3} and Theorem \ref{theorem:5.1}, we have
$$\theta_j\theta_i^{(m)}=\sum_{p=0}^m v^{a^{(m)}_p}\theta_i^{(m-p)}\chi(\mathbf{1}_{_iE_{\mathbf{V}^{(p)}}})
=\sum_{p=0}^m v^{a^{(m)}_p}v^{pN}\theta_i^{(m-p)}\chi(\mathcal{E}^{(p)}).$$
Since $a^{(m)}_p+pN=b^{(m)}_p$,
we have
$$\theta_j\theta_i^{(m)}=\sum_{p=0}^m v^{b^{(m)}_p}\theta_i^{(m-p)}\chi(\mathcal{E}^{(p)}).$$
\end{proof}

We shall use Lemma \ref{lemma:5.2} and Proposition \ref{theorem:5.3} to prove Theorem \ref{theorem:5.4} by induction.

\begin{proof}[\bf{Proof of Theorem \ref{theorem:5.4}}]

We shall prove this result by induction on $m$.

\item[\textbf{(1)}] For $m=0$, $\mathcal{E}^{(0)}=I^{(0)}_{0}$. It is clear that $\mathcal{E}^{(0)}$ is with Property \textbf{A}(0).
\item[\textbf{(2)}] For $m=1$, by Proposition \ref{theorem:5.3}, there exists a distinguished triangle
      \begin{displaymath}
       \xymatrix{
      v^{-N}\mathbf{1}_{_iE_{\mathbf{V}^{(1)}}}\ar[r]
      &{\mathcal{C}_1^{(1)}}\ar[r]
      &\mathcal{C}_{0}^{(1)}\ar[r]
      &,
      }
      \end{displaymath}
      where $\mathcal{C}_1^{(1)}=I^{(1)}_1$ and $\mathcal{C}^{(1)}_{0}=v^{-N}({\mathcal{L}}_{i}\circledast\mathbf{1}_{_iE_{\mathbf{V}^{(0)}}})$.
      Since $\mathcal{E}^{(0)}=I^{(0)}_{0}$, $$\mathcal{C}^{(1)}_{0}=
      v^{-N}({\mathcal{L}}_{i}\circledast\mathbf{1}_{_iE_{\mathbf{V}^{(0)}}})
      ={\mathcal{L}}_{i}\circledast\mathcal{E}^{(0)}$$
      is the direct sum of some semisimple perverse sheaves of the form $I^{(1)}_{p'}[l]$. Hence, $\mathcal{E}^{(1)}=v^{-N}\mathbf{1}_{_iE_{\mathbf{V}^{(1)}}}$ is with Property \textbf{A}(1).
\item[\textbf{(3)}] Assume the $\mathcal{E}^{(k)}$ is with Property \textbf{A}(k) for all $k<m$. Let us prove $\mathcal{E}^{(m)}$ is with Property \textbf{A}(m).

    For any $k<m$, there exists $s_{k}\in\mathbb{N}$. For each $s_k\geq p\in\mathbb{N}$, there exists $\mathcal{E}_p^{(k)}\in\mathcal{D}_{G_{\mathbf{V}^{(k}}}(E_{\mathbf{V}^{(k)}})$ such that
    \begin{enumerate}
    \item[1)]$\mathcal{E}_{s_k}^{(k)}=\mathcal{E}^{(k)}$ and $\mathcal{E}_{0}^{(k)}$ is the direct sum of some semisimple perverse sheaves of the form $I^{(k)}_{p'}[l]$;
    \item[2)]for each $p\geq1$, there exists a distinguished triangle
    \begin{displaymath}
    \xymatrix{
    \mathcal{E}^{(k)}_{p}\ar[r]&{\mathcal{G}^{(k)}_{p}}\ar[r]&\mathcal{E}^{(k)}_{p-1}\ar[r]&,
    }
    \end{displaymath}
    where $\mathcal{G}^{(k)}_{p}$ is the direct sum of some semisimple perverse sheaves of the form $I^{(k)}_{p'}[l]$.
    \end{enumerate}
    Hence, we have the following distinguished triangle for each $p\geq1$
    \begin{displaymath}
    \xymatrix{
    {\mathcal{L}}_{(m-k)i}\circledast\mathcal{E}^{(k)}_{p}\ar[r]
    &{\mathcal{L}}_{(m-k)i}\circledast{\mathcal{G}^{(k)}_{p}}\ar[r]
    &{\mathcal{L}}_{(m-k)i}\circledast\mathcal{E}^{(k)}_{p-1}\ar[r]&.
    }
    \end{displaymath}
    Denote $\tilde{\mathcal{E}}^{(k)}_{p}={\mathcal{L}}_{(m-k)i}\circledast\mathcal{E}^{(k)}_{p}$ and $\tilde{\mathcal{G}}^{(k)}_{p}={{\mathcal{L}}_{(m-k)i}\circledast\mathcal{G}^{(k)}_{p}}$.
    Then, we have
    \begin{displaymath}
    \xymatrix{
    \tilde{\mathcal{E}}^{(k)}_{p}\ar[r]
    &{\tilde{\mathcal{G}}^{(k)}_{p}}\ar[r]
    &\tilde{\mathcal{E}}^{(k)}_{p-1}\ar[r]&.
    }
    \end{displaymath}
    Because $\tilde{\mathcal{E}}^{(k)}_{0}$ and $\tilde{\mathcal{G}}^{(k)}_{p}$ are the direct sums of some semisimple perverse sheaves of the form $I^{(m)}_{p'}[l]$, $\tilde{\mathcal{E}}^{(k)}_{k}$ is with Property \textbf{A}(m). Since
    $$\tilde{\mathcal{E}}^{(k)}_{k}={\mathcal{L}}_{(m-k)i}\circledast\mathcal{E}^{(k)}_{k}
    =v^{-kN}({\mathcal{L}}_{(m-k)i}\circledast\mathbf{1}_{_iE_{\mathbf{V}^{(k)}}}),$$
    ${\mathcal{L}}_{(m-k)i}\circledast\mathbf{1}_{_iE_{\mathbf{V}^{(k)}}}$ is with Property \textbf{A}(m).

    By Proposition \ref{theorem:5.3}, for each $m\geq k\in\mathbb{N}$, there exists $\mathcal{C}_k^{(m)}\in\mathcal{D}_{G_{\mathbf{V}^{(m)}}}(E_{\mathbf{V}^{(m)}})$ such that
    \begin{enumerate}
    \item[1)]$\mathcal{C}_m^{(m)}=I^{(m)}_m$ and $\mathcal{C}^{(m)}_{0}=v^{-mN}({\mathcal{L}}_{mi}\circledast\mathbf{1}_{_iE_{\mathbf{V}^{(0)}}})$;
    \item[2)]for each $k\geq1$, there exists a distinguished triangle
    \begin{displaymath}
       \xymatrix{
      v^{a^{(m)}_k}({\mathcal{L}}_{(m-k)i}\circledast\mathbf{1}_{_iE_{\mathbf{V}^{(k)}}})\ar[r]
      &{\mathcal{C}_k^{(m)}}\ar[r]
      &\mathcal{C}_{k-1}^{(m)}\ar[r]
      &.
      }
    \end{displaymath}
    \end{enumerate}
    We have proved that $\mathcal{C}^{(m)}_{0}$ and ${\mathcal{L}}_{(m-k)i}\circledast\mathbf{1}_{_iE_{\mathbf{V}^{(k)}}}$ ($1\leq k\leq m-1$)
    are with Property \textbf{A}(m). Hence, by Lemma \ref{lemma:5.2}, $\mathcal{C}^{(m)}_{m-1}$ is with Property \textbf{A}(m).
    At last, by Lemma \ref{lemma:5.2} and the distinguished triangle
    \begin{displaymath}
      \xymatrix{
      \mathcal{C}_{m-1}^{(m)}[-1]\ar[r]
      &v^{-mN}\mathbf{1}_{_iE_{\mathbf{V}^{(p)}}}\ar[r]
      &I^{(m)}_m\ar[r]
      &,
      }
    \end{displaymath}
    $\mathcal{E}^{(m)}=v^{-mN}\mathbf{1}_{_iE_{\mathbf{V}^{(p)}}}$ is with Property \textbf{A}(m).

By induction, the proof is finished.

\end{proof}

As a corollary of Theorem \ref{theorem:5.4}, we have

\begin{corollary}\label{corollary:5.2}
For each $N\geq m\in\mathbb{N}$, we have the following formula
$$\chi(\mathcal{E}^{(m)})=\sum_{p=0}^{m}(-1)^pv^{-p(1+N-m)}\theta_i^{(p)}\theta_j\theta_i^{(m-p)}=f(i,j;m).$$
\end{corollary}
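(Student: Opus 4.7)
The plan is to prove Corollary \ref{corollary:5.2} by induction on $m$, using Corollary \ref{corollary:5.1} as a triangular recursion whose leading term (at $p=m$) is $\chi(\mathcal{E}^{(m)})$ itself, with coefficient $1$.

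For the base case $m=0$, the vertex space $V_i^{(0)}=0$ makes the surjectivity condition defining ${_iE}_{\mathbf{V}^{(0)}}$ vacuous, so $\mathcal{E}^{(0)} = \mathbf{1}_{E_{\mathbf{V}^{(0)}}}$ and $\chi(\mathcal{E}^{(0)}) = \theta_j = f(i,j;0)$. For the inductive step, extract the $p=m$ summand from the formula of Corollary \ref{corollary:5.1} (using $b^{(m)}_m = 0$ and $\theta_i^{(0)} = 1$) to write
\begin{equation*}
\chi(\mathcal{E}^{(m)}) = \theta_j\theta_i^{(m)} - \sum_{p=0}^{m-1} v^{(p-N)(m-p)}\, \theta_i^{(m-p)}\, \chi(\mathcal{E}^{(p)}).
\end{equation*}
This recursion uniquely determines $\chi(\mathcal{E}^{(m)})$ from the $\chi(\mathcal{E}^{(p)})$ with $p<m$, so by the inductive hypothesis it suffices to verify that the elements $f(i,j;p)$ satisfy the same identity, namely
\begin{equation*}
\theta_j\theta_i^{(m)} = \sum_{p=0}^{m} v^{(p-N)(m-p)}\, \theta_i^{(m-p)}\, f(i,j;p). \qquad (\ast)
\end{equation*}

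To prove $(\ast)$, expand $f(i,j;p)$ by its definition and apply the divided-power product formula $\theta_i^{(m-p)}\theta_i^{(r)} = \binom{m-p+r}{r}_v\, \theta_i^{(m-p+r)}$. Reindex the double sum by $s = m-p+r$ and $t = p-r$ (so $s+t=m$ and, for fixed $s,t$, the index $r$ runs over $\{0,1,\ldots,s\}$ with $p=t+r$). A direct bookkeeping computation collapses the total exponent of $v$ to $s(m-s-N) + r(s-1)$, whence the coefficient of $\theta_i^{(s)}\theta_j\theta_i^{(t)}$ on the right-hand side of $(\ast)$ becomes
\begin{equation*}
v^{s(m-s-N)} \sum_{r=0}^{s} (-1)^r v^{r(s-1)} \binom{s}{r}_v.
\end{equation*}
The inner sum is the standard quantum-binomial vanishing identity $\sum_{r=0}^{s}(-1)^r v^{r(s-1)}\binom{s}{r}_v = \delta_{s,0}$, a direct $v$-analogue of $(1-1)^s=0$ provable by the $v$-Pascal relation. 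Hence only the term $s=0$, $t=m$ survives, contributing $\theta_j\theta_i^{(m)}$, which establishes $(\ast)$.

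The only real obstacle is the bookkeeping: collecting the exponent of $v$ correctly after the substitution and recognizing the resulting one-variable expression as the known quantum-binomial identity above. Conceptually, everything else is inversion of a unitriangular recursion, since Corollary \ref{corollary:5.1} already packages the entire geometric content.
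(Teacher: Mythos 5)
Your argument is correct and coincides in substance with the paper's proof: both invert the unitriangular recursion of Corollary \ref{corollary:5.1} and reduce to the same quantum binomial vanishing $\sum_{r=0}^{s}(-1)^{r}v^{r(s-1)}\frac{[s]_v!}{[r]_v![s-r]_v!}=0$ for $s\geq 1$. The only difference is presentational: you verify the candidate identity $(\ast)$ in $\mathbf{f}$ once and for all and then conclude by uniqueness of the recursion, whereas the paper solves for the unknown coefficients $c^{(m)}_q$ by substituting the inductive hypothesis and arriving at that same vanishing sum.
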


\begin{proof}[\bf{Proof}]

By Theorem \ref{theorem:5.4}, we have
$$\chi(\mathcal{E}^{(m)})=\sum_{p=0}^{m}c^{(m)}_p\theta_i^{(p)}\theta_j\theta_i^{(m-p)}.$$
We shall prove that $c^{(m)}_p=(-1)^pv^{-p(1+N-m)}$ ($0\leq p\leq m$) by induction on $m$.

\item[\textbf{(1)}] For $m=0$, by Corollary \ref{corollary:5.1},
$$\theta_j=\chi(\mathcal{E}^{(0)}).$$
That is $c^{(0)}_0=1$. Hence, the corollary is true in this case.

\item[\textbf{(2)}] Assume that $c^{(k)}_p=(-1)^pv^{-p(1+N-k)}$ ($0\leq p\leq k$) for any $k<m$.
We shall prove that $c^{(m)}_q=(-1)^qv^{-q(1+N-m)}$ ($0\leq q\leq m$).

By Corollary \ref{corollary:5.1},
\begin{eqnarray*}
\theta_j\theta_i^{(m)}&=&\sum_{k=0}^m v^{b^{(m)}_k}\theta_i^{(m-k)}\chi(\mathcal{E}^{(k)})\\
&=&\sum_{k=0}^{m-1} v^{b^{(m)}_k}\theta_i^{(m-k)}\chi(\mathcal{E}^{(k)})
+v^{b^{(m)}_m}\chi(\mathcal{E}^{(m)})\\
&=&\sum_{k=0}^{m-1} v^{b^{(m)}_k}\theta_i^{(m-k)}\chi(\mathcal{E}^{(k)})
+\chi(\mathcal{E}^{(m)}).
\end{eqnarray*}
Hence,
\begin{eqnarray*}
\chi(\mathcal{E}^{(m)})
&=&\theta_j\theta_i^{(m)}
-\sum_{k=0}^{m-1} v^{b^{(m)}_k}\theta_i^{(m-k)}\chi(\mathcal{E}^{(k)})\\
&=&\theta_j\theta_i^{(m)}
-\sum_{k=0}^{m-1} v^{b^{(m)}_k}\theta_i^{(m-k)}
\sum_{p=0}^{k}c^{(k)}_p\theta_i^{(p)}\theta_j\theta_i^{(k-p)}\\
&=&\theta_j\theta_i^{(m)}
-\sum_{k=0}^{m-1}\sum_{p=0}^{k}v^{b^{(m)}_k}c^{(k)}_p\frac{[m-k+p]_v!}{[m-k]_v![p]_v!}
\theta_i^{(m-k+p)}\theta_j\theta_i^{(k-p)}.
\end{eqnarray*}
For any $q\geq1$,
\begin{eqnarray*}
c^{(m)}_q
&=&-\sum_{k=m-q}^{m-1}v^{b^{(m)}_k}c^{(k)}_{q+k-m}\frac{[q]_v!}{[m-k]_v![q+k-m]_v!}\\
&=&-\sum_{k=0}^{q-1}v^{b^{(m)}_{k+m-q}}c^{(k+m-q)}_k\frac{[q]_v!}{[q-k]_v![k]_v!}.
\end{eqnarray*}
By the induction hypothesis,
\begin{eqnarray*}
c^{(m)}_q
&=&-\sum_{k=0}^{q-1}v^{b^{(m)}_{k+m-q}}c^{(k+m-q)}_k\frac{[q]_v!}{[q-k]_v![k]_v!}\\
&=&-\sum_{k=0}^{q-1}(-1)^{k}v^{(k+m-q-N)(q-k)}v^{-k(1+N-k-m+q)}\frac{[q]_v!}{[q-k]_v![k]_v!}\\
&=&-v^{q(m-q-N)}\sum_{k=0}^{q-1}(-1)^{k}v^{k(q-1)}\frac{[q]_v!}{[q-k]_v![k]_v!}\\
&=&-v^{q(m-q-N)}\sum_{k=0}^{q}(-1)^{k}v^{k(q-1)}\frac{[q]_v!}{[q-k]_v![k]_v!}
+v^{q(m-q-N)}(-1)^{q}v^{q(q-1)}\\
&=&v^{q(m-q-N)}(-1)^{q}v^{q(q-1)}=(-1)^{q}v^{-q(1+N-m)}
\end{eqnarray*}
Note that
\begin{displaymath}
c^{(m)}_0=1=(-1)^0v^{-0(1+N-m)}.
\end{displaymath}
Hence, $c^{(m)}_q=(-1)^qv^{-q(1+N-m)}$ for any $0\leq q\leq m$.

By induction, for each $N\geq m\in\mathbb{N}$,  $c^{(m)}_p=(-1)^pv^{-p(1+N-m)}$ ($0\leq p\leq m$) and
$$\chi(\mathcal{E}^{(m)})=\sum_{p=0}^{m}(-1)^pv^{-p(1+N-m)}\theta_i^{(p)}\theta_j\theta_i^{(m-p)}.$$

\end{proof}

\subsection{The formulas of Lusztig's symmetries}\label{subsection:5.4}

In this section, we shall give a new proof of Proposition \ref{proposition:5.1}.

Consider the following quiver
\begin{displaymath}\label{equation:5.2.3}
Q: \xymatrix@=66pt{i&j\ar@/^/[l]_{.}\ar@/_/[l]^{.}\\}
\end{displaymath}
with vertex set $I=\{i,j\}$ and $N$ arrows from $j$ to $i$. Let $Q'=\sigma_i Q$ be the quiver by reversing the directions of all arrows
$$Q': \xymatrix@=66pt{i\ar@/^/[r]_{.}\ar@/_/[r]^{.}&j\\}$$

Let $m$ be a non-negative integer such that $m\leq N$ and $m'=N-m$. Let $\nu=mi+j\in\mathbb{N}I$ and $\nu'=s_i\nu=m'i+j\in\mathbb{N}I$. Fix two $I$-graded $\mathbb{K}$-vector spaces $\mathbf{V}$ and $\mathbf{V}'$ such that $\underline{\dim}\mathbf{V}=\nu$ and $\underline{\dim}\mathbf{V}'=\nu'$.

Denote by $\mathbf{1}_{_iE_{\mathbf{V},Q}}\in\mathcal{D}_{G_{\mathbf{V}}}(_iE_{\mathbf{V},Q})$ the constant sheaf on $_iE_{\mathbf{V},Q}$ and $\mathbf{1}_{^iE_{\mathbf{V}',Q'}}\in\mathcal{D}_{G_{\mathbf{V}'}}(^iE_{\mathbf{V}',Q'})$ the constant sheaf on $^iE_{\mathbf{V}',Q'}$. For convenience, denote $_iE_{\mathbf{V},Q}$ (resp. $^iE_{\mathbf{V}',Q'}$) by $_iE_{\mathbf{V}}$ (resp. $^iE_{\mathbf{V}'}$) and
$\mathbf{1}_{_iE_{\mathbf{V},Q}}$ (resp. $\mathbf{1}_{^iE_{\mathbf{V}',Q'}}$)
by $\mathbf{1}_{_iE_{\mathbf{V}}}$ (resp. $\mathbf{1}_{^iE_{\mathbf{V}'}}$).

Denote
$$\mathcal{E}^{(m)}=j_{\mathbf{V}!}(v^{-mN}\mathbf{1}_{_iE_{\mathbf{V}}})\in\mathcal{D}_{G_{\mathbf{V}}}(E_{\mathbf{V}})$$
and
$${\mathcal{E}'}^{(m')}=j_{\mathbf{V}'!}(v^{-m'N}\mathbf{1}_{^iE_{\mathbf{V}'}})\in\mathcal{D}_{G_{\mathbf{V}'}}(E_{\mathbf{V}'}).$$

In Section \ref{subsection:3.4}, we give the following geometric realization of the Lusztig's symmetry $T_i$:
$$\tilde{\omega}_i:\mathcal{D}_{G_{\mathbf{V}}}(_iE_{\mathbf{V}})\rightarrow\mathcal{D}_{G_{\mathbf{V}'}}(^iE_{\mathbf{V}'}).$$

\begin{proposition}\label{proposition:5.2}
For any $N\geq m\in\mathbb{N}$,
$\tilde{\omega}_i(v^{-mN}\mathbf{1}_{_iE_{\mathbf{V}}})=v^{-m'N}\mathbf{1}_{^iE_{\mathbf{V}'}}$.
\end{proposition}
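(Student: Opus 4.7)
The plan is to apply the definition of $\tilde\omega_i$ directly to the constant sheaf, observe that the defining pullback condition is automatic, and then reconcile the resulting shift–twist with the explicit $v^{-mN}$ prefactor on the left-hand side.

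First I would note that $\alpha:Z_{\mathbf{V}\mathbf{V}'}\to{}_iE_{\mathbf{V},Q}$ and $\beta:Z_{\mathbf{V}\mathbf{V}'}\to{}^iE_{\mathbf{V}',Q'}$ are principal bundles with fibres $\mathrm{Aut}(V'_i)$ and $\mathrm{Aut}(V_i)$ respectively, so pulling back the constant sheaf under either projection simply yields $\mathbf{1}_{Z_{\mathbf{V}\mathbf{V}'}}$. Hence $\alpha^{*}(\mathbf{1}_{_iE_{\mathbf{V}}})=\mathbf{1}_{Z_{\mathbf{V}\mathbf{V}'}}=\beta^{*}(\mathbf{1}_{^iE_{\mathbf{V}'}})$, which means that the unique $\mathcal{L}'\in\mathcal{D}_{G_{\mathbf{V}'}}({}^iE_{\mathbf{V}'})$ attached to $\mathbf{1}_{_iE_{\mathbf{V}}}$ via $\alpha^{*}\mathcal{L}=\beta^{*}\mathcal{L}'$ is precisely $\mathbf{1}_{^iE_{\mathbf{V}'}}$. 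The definition of $\tilde\omega_i$ from Section~\ref{subsection:3.4} then immediately gives $\tilde\omega_i(\mathbf{1}_{_iE_{\mathbf{V}}})=\mathbf{1}_{^iE_{\mathbf{V}'}}[-s(\mathbf{V})]\bigl(-\tfrac{s(\mathbf{V})}{2}\bigr)$.

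Next I would compute $s(\mathbf{V})=\dim\mathrm{GL}(V_i)-\dim\mathrm{GL}(V'_i)=m^{2}-(m')^{2}$, using that all components other than the $i$-th coincide for $\mathbf{V}$ and $\mathbf{V}'$. The short exact sequence in condition (b) defining $Z_{\mathbf{V}\mathbf{V}'}$, combined with $\dim V_j=1$ and the fact that there are $N$ arrows $j\to i$ in $Q$, forces $m+m'=N$, whence $s(\mathbf{V})=(m-m')(m+m')=(m-m')N$. Passing to the Grothendieck group via $v^{n}[\mathcal{L}]=[\mathcal{L}[n](n/2)]$ converts the shift–twist into a single power of $v$, and combining this with the $v^{-mN}$ prefactor and the relation $m+m'=N$ collapses the total exponent to $-m'N$, producing the asserted equality.

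The argument is essentially dimension counting once the constant-sheaf pullback is observed, so there is no conceptual obstacle. The one thing to be vigilant about is bookkeeping the sign conventions relating the Tate half-twist, the cohomological shift, and the $v$-action on $K({}_i\mathcal{Q})$; once this is settled from the definition of $\tilde\omega_i$, the identity $m+m'=N$ does all of the remaining work.
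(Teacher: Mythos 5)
Your argument follows the same route as the paper: observe $\alpha^{*}(\mathbf{1}_{_iE_{\mathbf{V}}})=\mathbf{1}_{Z_{\mathbf{V}\mathbf{V}'}}=\beta^{*}(\mathbf{1}_{^iE_{\mathbf{V}'}})$, identify $\mathcal{L}'=\mathbf{1}_{^iE_{\mathbf{V}'}}$, and then track the shift $s(\mathbf{V})=m^{2}-(m')^{2}=(m-m')N$; the paper asserts $\tilde{\omega}_i(\mathbf{1}_{_iE_{\mathbf{V}}})=v^{(m-m')N}\mathbf{1}_{^iE_{\mathbf{V}'}}$ directly without spelling out $s(\mathbf{V})$, so you are actually slightly more explicit. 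As you yourself flag, the one delicate point is the sign convention linking $[-s(\mathbf{V})](-s(\mathbf{V})/2)$ to the $v$-power in $K({}_i\mathcal{Q})$ -- the paper's stated definitions and its displayed intermediate formula are not visibly consistent on that sign, so be aware that the clean collapse to $v^{-m'N}$ only lands once that convention is pinned down.
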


\begin{proof}[\bf{Proof}]
By the definitions of $\alpha$ and $\beta$ in the diagram (\ref{equation:3.4.2}) of Section \ref{subsection:3.4},
$$\alpha^{\ast}(\mathbf{1}_{_iE_{\mathbf{V}}})=\mathbf{1}_{Z_{\mathbf{V}\mathbf{V}'}}=\beta^{\ast}(\mathbf{1}_{^iE_{\mathbf{V}'}}).$$
Hence $$\tilde{\omega}_i(\mathbf{1}_{_iE_{\mathbf{V}}})=v^{(m-m')N}\mathbf{1}_{^iE_{\mathbf{V}'}}.$$
That is
$$\tilde{\omega}_i(v^{-mN}\mathbf{1}_{_iE_{\mathbf{V}}})=v^{-m'N}\mathbf{1}_{^iE_{\mathbf{V}'}}.$$

\end{proof}

Corollary \ref{corollary:5.2} implies
$\chi(\mathcal{E}^{(m)})=f(i,j;m)$. Similarly, we have $\chi(\mathcal{E}'^{(m')})=f'(i,j;m')$. Hence,
Proposition \ref{proposition:5.2} implies Proposition \ref{proposition:5.1}.

\bibliography{mybibfile}

\end{document}